\newcommand\C{\mathbb{C}}
\newcommand\Q{\mathbb{Q}}
\newcommand\Z{\mathbb{Z}}
\newcommand\PP{\mathbb{P}}
\newcommand\osp{\frak{osp}}
\newcommand\gl{\frak{gl}}
\newcommand\g{\frak{g}}
\newcommand\B{\frak{b}}
\newcommand\h{\frak{h}}
\DeclareMathOperator{\Hom}{Hom}
\DeclareMathOperator{\End}{End}
\newtheorem{theorem}{Theorem}[section]
\newtheorem{lemma}[theorem]{Lemma}
\newtheorem{defin}[theorem]{Definition}
\newtheorem{corollary}[theorem]{Corollary}
\newtheorem{prop}[theorem]{Proposition}
\newtheorem{prob}[theorem]{Problem}
\newtheorem{rmrk}[theorem]{Remark}
\numberwithin{equation}{section}
  \newcommand{\details}[1]{
      \ \\
      {\color{blue}
        \textbf{Details:} #1
      }
  }
  \newcommand{\details}[1]{}
\title[Explicit formulas for Eigenvalues of Capelli operators]{Explicit formulas for Eigenvalues of Capelli operators for the Lie superalgebra $\osp(1|2n)$}
\author{Dene Lepine\textsuperscript{}}
\address{Department of Mathematics and Statistics, University of Ottawa}
\email{dlepi025@uottawa.ca}
\begin{document}

\begin{abstract}
	We define a natural basis for the algebra of $\frak{gosp}(1|2n)$-invariant differential operators on the affine superspace $\C^{1|2n}$. We prove that these operators lie in the image of the centre of the enveloping algebra of $\frak{gosp}(1|2n)$. Using this result, we compute explicit formulas for the eigenvalues of these operators on irreducible summands of $\mathcal{P}(\C^{1|2n})$. This settles the Capelli eigenvalue problem for orthosymplectic Lie superalgebras in the cases that were not addressed in \cite{SSS20} and \cite{SSS21}. Our main technique relies on an explicit calculation of a certain determinant with polynomial entries.
\end{abstract}

\maketitle

\tableofcontents
\thispagestyle{empty}

\addtocontents{toc}{\protect\setcounter{tocdepth}{1}}

%%%%%%%%%%%%%%%%%%%%%%%%%%%%%%%%%%
%
%
\section{Introduction}
%
%
%%%%%%%%%%%%%%%%%%%%%%%%%%%%%%%%%%

\footnotetext[1]{This paper grew out of the results of my Masters Thesis at the University of Ottawa under the supervision of Professor Hadi Salmasian. The author would like thank Professor Siddhartha Sahi for his suggestions on an earlier draft of this article, which improved the presentation of Theorem \ref{CapEvalThm}. He also thanks J\'{e}r\'{e}my Champagne for the useful discussions that eventually lead to the idea for Lemma \ref{Njlemma}.}

The Capelli identity, discovered by Alfredo Capelli \cite{Cap87}, is an equality between a differential operator on the space of $n\times n$ matrices, and a non-commutative determinant whose entries are natural generators of the universal enveloping algebra of $\gl(n)$. This identity played an integral role in Hermann Weyl's  proof of the FFT in invariant theory \cite{Weyl39}. Howe and Umeda's seminal work \cite{HowUme91} transcends this identity from the viewpoint of multiplicity-free actions and formulates the \emph{abstract Capelli problem}, see Problem \ref{AbsCapProb}. In \cite{Sah94}, Sahi introduced a natural basis (the \emph{Capelli operators}) for the algebra of invariant polynomial coefficient differential operators on a multiplicity-free module of a reductive Lie algebra, and a multi-parameter family of polynomials whose special values yield the eigenvalues of these operators. The Capelli operators of \cite{Sah94} include the Capelli element as a special case. The \emph{Capelli eigenvalue problem} is the problem of describing the eigenvalues of this natural basis, see Problem \ref{CapEvalProb}. Other examples of Capelli operators were explored by Konstant, Sahi, and Wallach, see \cite{KosSah91} and \cite{Wal92}.

The solution to the Capelli eigenvalue problem was followed by a further investigation of \emph{interpolation Jack polynomials} by Sahi, Knop-Sahi, and around the same time by Okounkov-Olshanski \cite{Sah94},~\cite{KnoSah96},\cite{OkOl97}. In the past few years, the Capelli eigenvalue problem has emerged again in the setting of Lie superalgebras, and nearly completely solved for multiplicity-free actions that arise from the Tits-Kantor-Koecher construction for Jordan superalgebras~\cite{ASS18}, \cite{SalSah16}, \cite{SSS20}, \cite{SSS21}. However, for a small number of cases the uniform methods used in the aforementioned works did not apply. In particular, for the Lie superalgebra $\frak{gosp}(m,2n):=\C\oplus \osp(m,2n)$ acting on its natural representation $\C^{m|2n}$, which corresponds to the Jordan superalgebra $(m-1,2n)_+$ in Kac's notation \cite{Kac77}, the problem was solved for all cases in \cite{SSS20} and \cite{SSS21}, except for when $m=1$ and $n>0$. The reason why the case $m=1$ is different is that the highest weights of the irreducible components in the decomposition of $\mathcal{S}(\C^{m|2n})$ into $\frak{gosp}(m|2n)$-modules look different from the generic case: they are of the form $\delta_1+\dotsb+\delta_i$, unlike the generic case which has highest weights of the form $k\delta_1$. Furthermore when $m=1$, the number of irreducible components in $\mathcal{S}^d(\C^{1|2n})$ does not tend to infinity. Therefore, a priori it is not clear how to parametrize irreducible components suitably by partitions to obtain a polynomial formula for the eigenvalues of the Capelli operators. Another issue is that the general argument of \cite{SSS20} for surjectivity of the map from the centre of the enveloping algebra $\frak{U}(\frak{gosp}(1|2n))$ onto the algebra of invariant differential operators fails. This surjectivity is a crucial step in the proofs of \cite{SSS20}.

In this paper, we circumvent the above issues and give solutions to both the Capelli eigenvalue problem and the abstract Capelli problem for the Lie superalgebra $\osp(1|2n)$, see Theorem \ref{CapEvalThm} and Theorem \ref{MainIntroTheorem} respectively. The two new ideas that arise are a proper indexing of the irreducible components by partitions and an explicit formula for the determinant of a matrix whose entries are the eigenvalues of the operators $C^i E^j$, where $C$ is the Casimir operator and $E$ is the Euler operator.

%%%%%%%%%%%%%%%%%%%%%%%%%%%%
%
%
\section{Main Results}
%
%
%%%%%%%%%%%%%%%%%%%%%%%%%%%%

Let $V:=\C^{1|2n}$ be the super vector space over $\C$ of dimension $1|2n$. Equip $V$ with the non-degenerate even supersymmetric bilinear form $B\colon V\times V\to \C$ given by the $(1|n+n)$-block matrix
	\begin{align*}
		\frak{J}_{1|2n}&=\begin{bmatrix}
			1&0&0\\
			0&0&I_n\\
			0&-I_n&0
		\end{bmatrix}.
	\end{align*}
Denote by $\osp(1|2n)$ the orthosymplectic Lie superalgebra that leaves $B$ invariant. Let $\h$ be the standard Cartan subalgebra of $\osp(1|2n)$ with basis $\mathscr{B}=\lbrace E_{i+1,i+1}-E_{i+n+1,i+n+1}\rbrace_{i=1}^n$, where the $E_{i,j}$ denote the usual matrix units. Let $\lbrace \delta_1,\dotsc, \delta_n\rbrace$ be the basis of $\h^*$ dual to $\mathscr{B}$. Let $\B$ be the Borel subalgebra of $\osp(1|2n)$ corresponding to the fundamental system
	\begin{align*}
		\Sigma:=\lbrace \delta_i-\delta_{i+1}\rbrace_{i=1}^{n-1}\cup\lbrace \delta_n \rbrace.
	\end{align*}
Set
	\begin{align*}
		\g:=\frak{gosp}(1|2n):=\C Z\oplus \osp(1|2n),
	\end{align*}
where $Z$ is a central element of $\g$. We can extend the natural $\osp(1|2n)$-module structure on $V$ to $\g$ by defining the action of $Z$ to be $-I_{V}$. 

Let $\mathcal{P}(V)$ be the superalgebra of (super)polynomials on $V$. We denote the generators of $V$ dual to the standard basis of $V$ by $\lbrace y,x_1,\dotsc,x_{2n}\rbrace$, where $y$ is even and $x_1,\dotsc,x_{2n}$ are odd. Define the (super)derivations $\lbrace \partial_y,\partial_1,\dotsc, \partial_{2n} \rbrace$ for all $1\leq i,j\leq 2n$ and homogeneous $u,v\in\mathcal{P}(V)$ by the relations
	\begin{align*}
		\partial_y(y)&:=1,&&\partial_y(x_j):=0,&\partial_y(uv)&:=\partial_y(u)v+u\partial_y(v),\\
		\partial_i(y)&:=0,&&\partial_i(x_j):=\delta_{i,j},&\partial_i(uv)&:=\partial_i(u)v+(-1)^{|u|}u\partial_i(v).
	\end{align*}
Let $\mathcal{D}(V)$ be the superalgebra of constant-coefficient (super)differential operators generated by $\lbrace \partial_y,\partial_1,\dotsc, \partial_{2n} \rbrace$ and let $\mathcal{PD}(V)$ be the superalgebra of (super)polynomial coefficient (super)differential operators. We can consider $\mathcal{PD}(V)$ as the subalgebra of $\End(\mathcal{P}(V))$ generated by $y,x_1,\dotsc,x_{2n},\partial_y,\partial_1,\dotsc,\partial_{2n}$. As a vector space we have $\mathcal{PD}(V)\cong \mathcal{P}(V)\otimes \mathcal{D}(V)$ and we use $\mathcal{PD}^k(V)$ to denote the image of $\mathcal{P}^k(V)\otimes \mathcal{D}^k(V)$ in $\mathcal{PD}(V)$, where $\mathcal{P}^k(V)$ and $\mathcal{D}^k(V)$ are the homogenous degree $k$ components. For more details on our notation see \cite[Sec. 2]{SalSah16}.

The $\g$-module structure on $V$ induces a $\g$-module structure on $\mathcal{P}(V)$. Explicitly, the action of $\osp(1|2n)$ is given by the restricted action from $\gl(1|2n)$ to $\osp(1|2n)$, where the action of $\gl(1|2n)$ on $\mathcal{P}(V)$ is defined for all $1\leq i,j\leq 2n$ by
	\begin{align*}
		E_{1,1}&\mapsto y\partial_y,&E_{1,j+1}&\mapsto y\partial_j,\\
		E_{j+1,1}&\mapsto x_j\partial_y,&E_{i+1,j+1}&\mapsto x_i\partial_j.
	\end{align*}
Furthermore, $Z$ acts on $\mathcal{P}(V)$ by the \emph{degree operator}. That is,
	\begin{align*}
		Z\mapsto E:=y\partial_y+\sum_{i=1}^{2n}x_i\partial_i.
	\end{align*}
Since $V$ is self-dual, then $\mathcal{D}(V)\cong \mathcal{P}(V^*)\cong \mathcal{P}(V)$. Thus, the action of $\g$ on $\mathcal{P}(V)$ induces an action on $\mathcal{D}(V)$ and hence on $\mathcal{PD}(V)\cong \mathcal{P}(V)\otimes \mathcal{D}(V)$. Observe that the action of $\g$ on $\mathcal{P}(V)$ induces a map
	\begin{align*}
		\frak{U}(\g)\to \mathcal{PD}(V),
	\end{align*}
where $\frak{U}(\g)$ denotes the universal enveloping algebra of $\g$. An element $D\in \mathcal{PD}(V)$ is $\g$-invariant if and only if $Dx=(-1)^{|D||x|}xD$, for all $x\in \g$. Denote by $\mathcal{PD}(V)^\g$ the subalgebra of $\g$-invariant polynomial coefficient differential operators. Notice that the image of the centre of $\frak{U}(\g)$, denoted by $\mathcal{Z}(\g)$, lies in $\mathcal{PD}(V)^\g$. That is, we have a map
	\begin{align}\label{AbsCapProb}
		\mathcal{Z}(\g)\to\mathcal{PD}(V)^\g.
	\end{align}

\begin{prob}[Abstract Capelli Problem for $\osp(1|2n)$]\label{AbstractCapelliProblem}
	Determine whether or not the map \eqref{AbsCapProb} is surjective. 
\end{prob}

To put Problem \ref{AbstractCapelliProblem} in perspective, recall that in the non-super setting, this problem was investigated in the work of Howe and Umeda~\cite{HowUme91}. In \cite{SSS20} this problem was studied in the general setting of multiplicity-free modules of basic classical Lie superalgebras that arise from the super Tits-Kantor-Koecher construction. More precisely, in \cite[Corollary 1.18]{SSS20} the authors addressed Problem \ref{AbstractCapelliProblem} for pairs $(\g,V)$ associated to a simple unital Jordan superalgebra $J$ such that $J_1\neq \lbrace 0\rbrace$ and $J$ is not of type $(0,2n)_+$ or $JP(0,n)$, in the Kac notation. The answer turns out to be affirmative for all such pairs $(\g,V)$, except when $V$ is a $\frak{gosp}(2|4)$-module constructed from the exceptional Jordan superalgebra of type $F$. The proof technique of \cite{SSS20} uses the connection between the image of the Harish-Chandra homomorphism and Sergeev-Veselov interpolation polynomials. This technique is not available when $\g=\osp(1|2n)$ and $V=\C^{1|2n}$, corresponding to $J=(0,2n)_+$.

Our first main result of this paper is an affirmative answer to Problem \ref{AbstractCapelliProblem} (see Theorem \ref{MainIntroTheorem}). This resolves the abstract Capelli problem for pairs $(\g,V)$ associated to the Jordarn superalgebra $(0,2n)_+$ which were left out in \cite{SSS20}.\\

To explain our second main result, we begin by constructing the Capelli basis of $\mathcal{PD}(V)^\g$. Let us define two operators $R^2$ and $\nabla^2$ on $\mathcal{P}(V)$ as follows: $R^2$ is the operator of left multiplication by $y^2-2\sum_{1\leq i\leq n} x_{i+n}x_i$ and $\nabla^2$ is the constant coefficient differential operator $\partial_y-2\sum_{1\leq i\leq n} \partial_{i+n}\partial_i.$ We call $\nabla^2$ the \emph{super Laplace operator}. These operators admit the following relations
	\begin{align*}
		\left[ (1/2)R^2, -(1/2)\nabla^2 \right]&= E+\frac{1-2n}{2},\\
		\left[ E+\frac{1-2n}{2}, (1/2)R^2 \right]&= R^2,\\
		\left[ E+\frac{1-2n}{2},-(1/2)\nabla^2 \right]&=\nabla^2.
	\end{align*}
That is, the elements $\left\lbrace E+\frac{1-2n}{2},(1/2)R^2, -(1/2)\nabla^2\right\rbrace$ form a standard $\frak{sl}_2(\C)$-triple. Furthermore, one can check directly that the actions of $\frak{sl}_2(\C)$ and $\osp(1|2n)$ commute, so that $\mathcal{P}(V)$ is an $\osp(1|2n)\times \frak{sl}_2(\C)$-module. There is a natural grading of $\mathcal{P}(V)$ by degree and we have $Ep=kp$ for $p\in \mathcal{P}^k(V)$. Furthermore, define $\mathcal{H}=\ker (\nabla^2)$ and $\mathcal{H}_k=\mathcal{P}^k(V)\cap \mathcal{H}$. The elements of $\mathcal{H}$ are called \emph{harmonic superpolynomials}. The next Lemma is a special case of \cite[Th. 5.2]{Cou13}.

\begin{lemma}\label{HkIrred}
	For $0\leq k\leq 2n+1$, the space $\mathcal{H}_k$ is an irreducible $\osp(1|2n)$-module of highest weight
	\begin{align*}
		\lambda_k&=\sum_{j=1}^{\min(2n-k+1,k)}\delta_j.
	\end{align*}
\end{lemma}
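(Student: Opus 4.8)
The statement is quoted as a special case of Coulembier's decomposition of the polynomial superalgebra into harmonics, so the plan is to recover it by specializing the general orthosymplectic Howe-duality picture to $V=\C^{1|2n}$. First I would observe that the $\frak{sl}_2(\C)$-triple $\{E+\tfrac{1-2n}{2},\tfrac12 R^2,-\tfrac12\nabla^2\}$ commutes with the $\osp(1|2n)$-action, so $\mathcal{P}(V)$ is a genuine $\osp(1|2n)\times\frak{sl}_2(\C)$-module and each graded piece $\mathcal{P}^k(V)$ decomposes $\osp(1|2n)$-equivariantly as $\bigoplus_{2j\le k}R^{2j}\mathcal{H}_{k-2j}$, provided no ``unitarizability failure'' occurs; for $V=\C^{1|2n}$ the superdimension is $1-2n\le 0$ and the relevant $\frak{sl}_2$-strings are finite, which is exactly why the harmonic range is truncated at $k\le 2n+1$ rather than running to infinity. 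I would then cite Theorem 5.2 of \cite{Cou13} to get the $\osp(1|2n)$-module structure of each $\mathcal{H}_k$; the only thing left to pin down is the precise form of the highest weight $\lambda_k$.

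To identify $\lambda_k$ I would work directly with monomials. The Borel corresponds to $\Sigma=\{\delta_i-\delta_{i+1}\}\cup\{\delta_n\}$, so a highest-weight vector must be annihilated by the raising operators $x_i\partial_{i+1}$ (for $1\le i\le n-1$) coming from the even simple roots and by the odd raising operator attached to $\delta_n$. A natural candidate for a harmonic highest-weight vector in degree $k$ is the monomial $x_1x_2\cdots x_k$ when $k\le n$ (it clearly has weight $\delta_1+\dots+\delta_k$, is killed by $\nabla^2=\partial_y-2\sum\partial_{i+n}\partial_i$ since it involves no $y$ and no $x_{i+n}$, and is a $\B$-highest-weight vector); when $k>n$ one must build the highest-weight vector using the symplectic pairing to reach weight $\sum_{j=1}^{2n-k+1}\delta_j$, and one checks this monomial-type vector is still harmonic and highest-weight. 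The two formulas match at the extremes of the range and are unified by $\min(2n-k+1,k)$, with the middle value $k\approx n+\tfrac12$ giving the ``top'' harmonic space; I would verify the weight of the explicit vector and then invoke Lemma \ref{HkIrred}'s irreducibility (from \cite{Cou13}) to conclude the whole module has that highest weight.

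The main obstacle is not irreducibility — that is handed to us by \cite[Th.~5.2]{Cou13} — but rather the careful bookkeeping for $k$ in the upper half of the range $n<k\le 2n+1$: one must exhibit an explicit harmonic polynomial of weight $\sum_{j=1}^{2n-k+1}\delta_j$ that is genuinely $\B$-highest (killed by every positive root vector, including the odd one), and confirm that $\mathcal{H}_k$ is nonzero exactly for $0\le k\le 2n+1$ and vanishes beyond. A clean way to handle this uniformly is to use the $\frak{sl}_2(\C)$-action: since $-\tfrac12\nabla^2$ lowers degree by $2$ and $\tfrac12 R^2$ raises it, the harmonic spaces are the lowest-weight spaces of the $\frak{sl}_2$-strings in $\mathcal{P}(V)$, and finiteness of these strings (forced by $\dim V^{\bar 0}-\dim V^{\bar 1}=1-2n$) gives both the truncation at $k=2n+1$ and, via the known $\osp(1|2n)\times\frak{sl}_2$ Howe duality for $\C^{1|2n}$, the exact highest weight. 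I would present the monomial computation as the ``hands-on'' verification and defer the structural claims to the citation.
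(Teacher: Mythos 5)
The paper gives no proof of this lemma: it is asserted as a direct special case of \cite[Th.~5.2]{Cou13}, and that theorem already supplies both the irreducibility of $\mathcal{H}_k$ and the explicit highest weight $\lambda_k$. Your plan defers to the same theorem for irreducibility, so at bottom you are taking the paper's route. The hands-on verification you add for the highest weight is a correct sanity check in the range $k\le n$: the monomial $x_1\cdots x_k$ involves no $y$ and no $x_{n+1},\dots,x_{2n}$, hence is killed by $\nabla^2$ and by every positive root vector, and has weight $\delta_1+\dotsb+\delta_k$. But for $n<k\le 2n+1$, which you flag yourself as the main obstacle, the plan stops at an outline: the target weight has $|\lambda_k|=2n-k+1<k$, so a highest-weight vector in $\mathcal{H}_k$ must carry weight-zero factors (powers of $y$ or symplectic pairs $x_ix_{i+n}$) and cannot be a bare product of the $x_i$; you neither exhibit such a harmonic $\B$-highest vector nor establish that $\mathcal{H}_k=0$ for $k>2n+1$. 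Since these are precisely the points the citation to \cite{Cou13} is being used to cover, the partial re-derivation does not add to the paper's one-line appeal to Coulembier. If you actually wanted a self-contained argument you would need to push through the $\frak{sl}_2(\C)$-string computation you gesture at, for instance by using the $k\mapsto 2n+1-k$ duality between $\mathcal{H}_k$ and $\mathcal{H}_{2n+1-k}$ that the finiteness of those strings implies, to transfer the explicit highest-weight vector from the range $k\le n$ to the upper range.
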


Using Lemma \ref{HkIrred} and the fact that multiplication by $R^2$ commutes with the $\osp(1|2n)$ action it follows that $R^{2\ell}\mathcal{H}_k$ is an irreducible $\osp(1|2n)$-module of highest weight $\lambda_k$, for all $\ell\in \Z_{\geq 0}$. For the following Lemma see \cite[Lemma 2.1]{Cou13}.

\begin{lemma}[Fischer Decomposition]\label{FischerDecomp}
	For $k\in \Z_{\geq 0}$ and $\ell=\lfloor\frac{k}{2}\rfloor$, the decomposition of $\mathcal{P}(V)$ as an $\osp(1|2n)$-module is given by
		\begin{align*}
			\mathcal{P}^k(V)&\simeq \begin{cases}
				\mathcal{H}_k\oplus R^2\mathcal{H}_{k-2}\oplus \dotsb\oplus R^{2\ell}\mathcal{H}_{k-2\ell}& \textrm{ for }k\leq 2n+1\textrm{ and }\\
				R^2\mathcal{P}^{k-2}(V)&\textrm{ for }k>2n+1.
			\end{cases}
		\end{align*}
\end{lemma}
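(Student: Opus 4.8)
The plan is to mimic the classical Fischer decomposition argument, using the $\frak{sl}_2(\C)$-triple $\{E+\tfrac{1-2n}{2},\ (1/2)R^2,\ -(1/2)\nabla^2\}$ together with the irreducibility statement of Lemma \ref{HkIrred}. First I would set up the representation-theoretic bookkeeping: since the $\frak{sl}_2(\C)$-action commutes with $\osp(1|2n)$, the space $\mathcal{P}(V)$ decomposes as an $\osp(1|2n)\times\frak{sl}_2(\C)$-module, and within each degree $k$ the operator $R^2$ raises degree by $2$ while $\nabla^2$ lowers it by $2$. The key observation is that $\mathcal{H}_k=\ker(\nabla^2)\cap\mathcal{P}^k(V)$ consists precisely of the $\frak{sl}_2(\C)$-lowest-weight vectors sitting in degree $k$, so by the theory of $\frak{sl}_2(\C)$-strings the submodule they generate under $R^2$ is $\bigoplus_{\ell\geq 0} R^{2\ell}\mathcal{H}_k$, with each $R^{2\ell}$ injective on $\mathcal{H}_k$ as remarked before the Lemma.

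The core of the proof is then the inductive claim that $\mathcal{P}^k(V)=\mathcal{H}_k\oplus R^2\mathcal{P}^{k-2}(V)$ for every $k$; iterating this and using $\mathcal{H}_j=0$ for $j>2n+1$ (which follows from Lemma \ref{HkIrred}, since the weight $\lambda_k$ is only defined/nonzero in that range, and more concretely $\nabla^2$ becomes injective on $\mathcal{P}^k$ once $k>2n+1$) yields both cases of the stated formula. To prove the inductive claim, I would argue that the sum is direct because $R^2$ is injective on $\mathcal{P}^{k-2}(V)$ — this follows from the $\frak{sl}_2(\C)$-representation theory, as $R^2$ acting on a degree-$(k-2)$ vector can only vanish if that vector is a highest-weight vector for the $\frak{sl}_2$-action with the appropriate weight, which is ruled out in the relevant degree range — and that the sum exhausts $\mathcal{P}^k(V)$ by a dimension count. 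For the dimension count, I would use the commutation relation $[\nabla^2,R^2]=\,$(scalar depending on $E$), i.e. the $\frak{sl}_2(\C)$-triple relations, to show that $\nabla^2\colon R^2\mathcal{P}^{k-2}(V)\to\mathcal{P}^{k-2}(V)$ is surjective, hence $\mathcal{P}^k(V)=\ker(\nabla^2|_{\mathcal{P}^k})+R^2\mathcal{P}^{k-2}(V)=\mathcal{H}_k+R^2\mathcal{P}^{k-2}(V)$.

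I expect the main obstacle to be handling the degenerate behaviour specific to the super case: unlike the purely even setting, $\mathcal{P}^k(V)$ is finite-dimensional (since the $x_i$ are odd), so the $\frak{sl}_2(\C)$-strings are finite and one must be careful about exactly where they terminate. Concretely, the subtle point is verifying that $\nabla^2$ is injective on $\mathcal{P}^k(V)$ for $k>2n+1$ — equivalently that there are no new harmonics beyond degree $2n+1$ — and that, in the boundary range $k\le 2n+1$, the map $R^{2\ell}$ remains injective on $\mathcal{H}_{k-2\ell}$ all the way down. Both of these amount to checking that the relevant $\frak{sl}_2(\C)$-weight vectors are never simultaneously highest and lowest weight in an inadmissible degree, which I would pin down using the explicit form of the triple, in particular the weight shift $E+\tfrac{1-2n}{2}$ and the fact that a degree-$k$ vector has $\frak{sl}_2$-weight $k+\tfrac{1-2n}{2}$; the sign of this weight at $k=2n+1$ versus $k>2n+1$ is exactly what forces the dichotomy in the statement. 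Once this is in place, the decomposition follows by a clean induction on $k$.
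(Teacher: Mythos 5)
The paper does not actually prove this lemma; it is quoted from \cite[Lemma 2.1]{Cou13}. Your $\frak{sl}_2(\C)$-triple approach is nonetheless essentially the right route, and most of what you write is sound: the key structural fact is that a degree-$k$ vector has $h$-weight $k+\tfrac{1-2n}{2}=k-n+\tfrac{1}{2}$, a \emph{half-integer}, so no vector in $\mathcal{P}(V)$ can generate a finite-dimensional $\frak{sl}_2$-module. This gives injectivity of $R^2$ on all of $\mathcal{P}(V)$ (one can also see it directly: $R^2$ has a $y^2$ term, and in $\mathcal{P}(V)\cong\C[y]\otimes\Lambda(x_1,\dots,x_{2n})$ multiplication by $R^2$ raises the top $y$-degree, so kills nothing). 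It also gives, via Lemma \ref{LapRRel} and the inductive decomposition of $\mathcal{P}^{k-2}(V)$, that $\nabla^2R^2$ acts on each summand $R^{2j}\mathcal{H}_{k-2-2j}$ by the scalar $2(j+1)(2k-2j-3-2n)$, which is nonzero precisely because $j=k-n-\tfrac{3}{2}$ is never an integer; hence $\nabla^2R^2$ is invertible on $\mathcal{P}^{k-2}(V)$ and $\mathcal{P}^k(V)=\mathcal{H}_k\oplus R^2\mathcal{P}^{k-2}(V)$. That part of your sketch is correct.

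The one genuine gap is your justification that $\mathcal{H}_k=0$ for $k>2n+1$. You claim it ``follows from Lemma \ref{HkIrred}, since the weight $\lambda_k$ is only defined/nonzero in that range,'' but Lemma \ref{HkIrred} only describes $\mathcal{H}_k$ for $0\le k\le 2n+1$ and is silent for larger $k$; it gives no vanishing statement. Your fallback that ``$\nabla^2$ becomes injective on $\mathcal{P}^k$ once $k>2n+1$'' is exactly what is needed, but you assert it rather than prove it, and it does \emph{not} come from $\frak{sl}_2$-theory alone --- the $\frak{sl}_2$-structure would happily permit new lowest-weight vectors in any degree. The clean fix is a dimension count: once you have $\mathcal{P}^k(V)=\mathcal{H}_k\oplus R^2\mathcal{P}^{k-2}(V)$ with $R^2$ injective, then $\dim\mathcal{H}_k=\dim\mathcal{P}^k(V)-\dim\mathcal{P}^{k-2}(V)$. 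Since $\dim\mathcal{P}^k(V)=\sum_{j=0}^{\min(k,2n)}\binom{2n}{j}$ stabilizes at $2^{2n}$ for $k\ge 2n$ (the odd variables are exterior), this difference vanishes for all $k\ge 2n+2$. Inserting this closes the induction and yields both cases of the statement.
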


Thus as a $\g$-module, the space $\mathcal{P}(V)$ admits a multiplicity-free decomposition. Define $$\mathbb{P}:=\lbrace (\nu_1,\nu_2)\colon \nu_1,\nu_2\in \Z_{\geq 0},~\nu_1\geq \nu_2\rbrace,$$ and for any $\nu\in \mathbb{P}$ set $|\nu|:=\nu_1+\nu_2$. We will parametrize the irreducible submodules of $\mathcal{P}^k(V)$ by the set
	\begin{align}\label{IrredInK}
		\mathbb{P}_{k,n}^*&:=\left\lbrace \nu\in \mathbb{P}\colon |\nu|=k\textrm{ and }\nu_2\geq \left\lfloor \frac{k}{2}\right\rfloor-n\right\rbrace,
	\end{align}
via the assignment $\nu\mapsto V_\nu:=R^{2\nu_2}\mathcal{H}_{\nu_1-\nu_2},$ for $\nu\in \mathbb{P}_{k,n}^*$. Then we have
	\begin{align}\label{Pdecomp}
		\mathcal{P}^k(V)=\bigoplus_{\nu\in \PP_{k,n}^*} V_\nu.
	\end{align}
Let $\mathcal{D}^k(V)\otimes \mathcal{P}^k(V)\to \C$ be the canonical non-degenerate pairing between $\mathcal{D}^k(V)$ and $\mathcal{P}^k(V)$. This pairing induces a $\g$-module isomorphism $\mathcal{D}^k(V)\simeq \mathcal{P}^k(V)^*.$ That is,
	\begin{align}\label{Ddecomp}
		\mathcal{D}^k(V)\simeq \bigoplus_{\nu\in \PP_{k,n}^*}V_\nu^*.
	\end{align}
We remark that the latter isomorphism can be chosen such that for every $\nu\in \mathbb{P}_{k,n}^*$, the restriction of the natural map $\mathcal{D}^k(V)\otimes \mathcal{P}^k(V)\to \C$ to $V_\nu^*\otimes V_\nu$ is the standard duality pairing. Define
	\begin{align}\label{IrredUptoD}
	\Lambda_{d,n}^*:=\bigcup_{k=0}^d \PP_{k,n}^*\quad\textrm{and}\quad \Lambda_n^*:=\bigcup_{k\geq 0} \PP_{k,n}^*.
	\end{align}
Using \eqref{Pdecomp} and \eqref{Ddecomp} we have 
	\begin{align}\label{CapIso}
		\mathcal{PD}(V)^\g\simeq\bigoplus_{\nu,\nu'\in \Lambda_n^*}( V_\nu\otimes V_{\nu'}^* )^\g\simeq\bigoplus_{\nu,\nu'\in \Lambda_{n}^*}\Hom_\g(V_{\nu'},V_\nu)\simeq \bigoplus_{\nu\in \Lambda_{n}^*}\Hom_\g(V_{\nu},V_\nu).
	\end{align}

\begin{defin}[Capelli Operator]
	For $\nu\in \Lambda_n^*$, define $D_\nu\in \mathcal{PD}(V)^\g$ to be the element mapped to $I_\nu\in \Hom_\g(V_\nu,V_\nu)$ by \eqref{CapIso}. The operator $D_\nu$ is called a \emph{Capelli Operator}.
\end{defin}
	
Indeed, Schur's Lemma implies $\dim\Hom_\g(V_\nu,V_\nu)=1$ and therefore the family $\lbrace D_\nu\rbrace_{\nu\in \Lambda_n^*}$ forms a basis of $\mathcal{PD}(V)^\g$. Schur's Lemma also implies that for any $\mu,\nu\in \Lambda_{n}^*$ the Capelli operator $D_\mu$ acts on $V_\nu$ by a scalar. This gives rise to the following problem called the \emph{Capelli Eigenvalue problem} for $\osp(1|2n)$:

\begin{prob}[Capelli Eigenvalue Problem for $\osp(1|2n)$]\label{CapEvalProb}
	Let $\mu,\nu\in \Lambda_n^*$. Determine the scalar $c_{\nu}(\mu)$ by which $D_\nu$ acts on $V_\mu$.
\end{prob}

Our answer to Problem \ref{CapEvalProb} makes use of a special case of \emph{Knop-Sahi polynomials}. These polynomials are uniquely defined by certain vanishing conditions. In particular, we let $\rho:=(-n-\frac{1}{2},0)$ and let $P_\nu^\rho$, for $\nu\in \mathbb{P}$, be the polynomial defined in \cite{KnoSah96} that satisfies the following conditions:
		\begin{enumerate}[(1)]
			\item $P_\nu^\rho$ is symmetric in two variables $x$ and $y$,
			\item $\deg(P^\rho_\nu)\leq |\nu|$,
			\item $P_\nu^\rho(\mu+\rho)=0$ for all $\mu\in \mathbb{P}$ with $|\mu|\leq |\nu|$ and $\mu\neq \nu$,
			\item $P_\nu^\rho(\nu+\rho)=(\nu_1-\nu_2)!(\nu_2!)(\nu_1-(n+\frac{1}{2}))^{\underline{\nu_2}}$,
		\end{enumerate}
	where $x^{\underline{i}}=x(x-1)\dotsb(x-i+1)$.

\begin{theorem}\label{CapEvalThm}
	Let $\mu,\nu\in \Lambda^*_n$. Then $c_\nu(\mu)=\frac{P_\nu^\rho(\mu_1-n-\frac{1}{2},\mu_2)}{(\nu_1-\nu_2)!(\nu_2!)(\nu_1-(n+\frac{1}{2}))^{\underline{\nu_2}}}$. In particular, the explicit formula is given by $c_\nu (\mu)=ab$, where
		\begin{align*}
			a&=\frac{\mu_2^{\underline{\nu_2}}(\mu_1-(n+\frac{1}{2}))^{\underline{\nu_2}}}{(\nu_2!)(\nu_1-(n+\frac{1}{2}))^{\underline{\nu_2}}(n+\frac{1}{2})^{\underline{\nu_1-\nu_2}}} 
		\end{align*}
and
		\begin{align*}
			b=\sum_{i=0}^{\nu_1-\nu_2}{n+\frac{1}{2}\choose \nu_1-\nu_2-i}{n+\frac{1}{2}\choose i} (\mu_1-\nu_2-i)^{\underline{\nu_1-\nu_2-i}}(\mu_2-\nu_2)^{\underline{i}}.
		\end{align*}
%		\begin{enumerate}[(i)]
%			\item If $\nu_2=0$, then
%				\begin{align*}
%			c_\nu(\mu)&=\frac{1}{(n+\frac{1}{2})^{\underline{\nu_1}}}\sum_{i=0}^{\nu_1}{n+\frac{1}{2}\choose \nu_1-i}{n+\frac{1}{2}\choose i}\left(\mu_1-i\right)^{\underline{\nu_1-i}}\left(\mu_2\right)^{\underline{i}},
%		\end{align*}
%	where $x^{\underline{i}}=x(x-1)\dotsb(x-i+1)$.
%			\item If $\nu_2\geq 1$, then $c_\nu(\mu)=ab,$ where
%			\begin{align*}
%				a=\prod_{i=1}^{\nu_2} \frac{(\mu_2-i+1)(2(\mu_1-i)+1-2n)}{(\nu_2-i+1)(2(\nu_1-i)+1-2n)}
%			\end{align*}
%		and
%			\begin{align*}
%				b=\frac{1}{(n+\frac{1}{2})^{\underline{\nu_1-\nu_2}}}\sum_{i=0}^{\nu_1-\nu_2}{n+\frac{1}{2}\choose \nu_1-\nu_2-i}{n+\frac{1}{2}\choose i} (\mu_1-\nu_2-i)^{\underline{\nu_1-\nu_2-i}}(\mu_2-\nu_2)^{\underline{i}}. 
%			\end{align*}	
%		\end{enumerate}
\end{theorem}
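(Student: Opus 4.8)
The plan is to reduce the computation of $c_\nu(\mu)$ to a finite-dimensional linear-algebra problem governed by the $\mathfrak{sl}_2(\C)$-action on $\mathcal{P}(V)$, and then identify the resulting rational function with the Knop--Sahi interpolation polynomial via its defining vanishing conditions. First I would exploit the decomposition $\mathcal{P}(V)=\bigoplus_{\nu\in\Lambda_n^*}V_\nu$ together with the fact that the Capelli operators $\{D_\nu\}$ and the algebra generated by the Casimir $C$ of $\osp(1|2n)$ and the Euler operator $E$ both act diagonally with respect to this decomposition, each $V_\mu$ being an eigenspace. Since $C$ acts on $V_\mu=R^{2\mu_2}\mathcal{H}_{\mu_1-\mu_2}$ by a scalar depending only on the highest weight $\lambda_{\mu_1-\mu_2}$, and $E$ acts by $|\mu|=\mu_1+\mu_2$, I would first write down these eigenvalues explicitly (the Casimir eigenvalue is a quadratic expression in $\mu_1-\mu_2$ coming from $\langle\lambda+2\rho_{\osp},\lambda\rangle$). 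The key structural input is the abstract Capelli problem (Theorem \ref{MainIntroTheorem}): surjectivity of $\mathcal{Z}(\g)\to\mathcal{PD}(V)^\g$ means every $D_\nu$ is a polynomial in $C$ and $E$, so $c_\nu(\mu)$ is a polynomial function of the pair $(\text{Casimir eigenvalue at }\mu,\ |\mu|)$, equivalently a symmetric polynomial in the shifted coordinates $(\mu_1-n-\tfrac12,\ \mu_2-n-\tfrac12)$ or, after a harmless affine change, in $(\mu_1-n-\tfrac12,\mu_2)$.

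Next I would pin down $c_\nu$ as a specific polynomial by verifying the four conditions that characterize $P_\nu^\rho$. Symmetry in the two variables is exactly the statement that $c_\nu(\mu)$ depends only on the Casimir and Euler eigenvalues, which are symmetric in $\mu_1-n-\tfrac12$ and $\mu_2-n-\tfrac12$ (a short computation, since $\langle\lambda_k+2\rho,\lambda_k\rangle$ and $|\mu|$ are each symmetric after shifting). The degree bound $\deg\le|\nu|$ comes from a filtration argument: $D_\nu$ lies in $\mathcal{PD}^{|\nu|}(V)$ because $V_\nu$ first appears in $\mathcal{P}^{|\nu|}(V)$, and a differential operator in $\mathcal{PD}^{m}(V)^{\g}$ has eigenvalue of degree at most $m$ in the relevant parameters — this is the standard argument from \cite{Sah94} adapted to the super setting. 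The crucial vanishing condition $c_\nu(\mu)=0$ for $|\mu|\le|\nu|$, $\mu\ne\nu$, follows because $D_\nu$ annihilates every $\mathcal{P}^m(V)$ with $m<|\nu|$ (it is a polynomial coefficient operator of degree $|\nu|$ with no constant term relative to that grading) and, within $\mathcal{P}^{|\nu|}(V)$, it kills all summands $V_\mu$ with $\mu\ne\nu$ by definition of the Capelli basis. Finally the normalization $c_\nu(\nu)$: by construction $D_\nu$ acts as the identity on $V_\nu$, so $c_\nu(\nu)=1$, and dividing $P_\nu^\rho$ by $P_\nu^\rho(\nu+\rho)=(\nu_1-\nu_2)!\,\nu_2!\,(\nu_1-(n+\tfrac12))^{\underline{\nu_2}}$ matches this. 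Uniqueness in \cite{KnoSah96} then forces $c_\nu(\mu)=P_\nu^\rho(\mu_1-n-\tfrac12,\mu_2)/P_\nu^\rho(\nu+\rho)$.

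The last part is the fully explicit formula $c_\nu(\mu)=ab$. Here I would use the known explicit expansion of the two-variable Knop--Sahi polynomial $P_\nu^\rho$ — in two variables these specialize to an essentially hypergeometric expression — and match it against the stated product of a ``leading'' factor $a$ (a ratio of falling factorials) and the finite sum $b$ with binomial coefficients $\binom{n+\frac12}{j}$. Concretely, I expect to expand $P_\nu^\rho$ using the combinatorial formula for interpolation Jack polynomials at Jack parameter $1$ specialized to two variables, rewrite the double sum as a single sum over $i=0,\dots,\nu_1-\nu_2$ via a Vandermonde-type identity, and then divide by the normalization to produce exactly $a$ and $b$. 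The main obstacle is this final identification: verifying that the abstract characterization output equals the claimed closed form requires either a clean derivation of the two-variable Knop--Sahi polynomial from its vanishing conditions or a determinant evaluation — indeed the abstract — and this is presumably where the ``explicit calculation of a certain determinant with polynomial entries'' advertised in the abstract enters, the determinant being that of the matrix of eigenvalues of $C^iE^j$ on the modules $V_\nu$ with $|\nu|\le d$. Establishing that this determinant is nonzero is what guarantees that $C$ and $E$ separate the irreducible components and hence that the polynomial $c_\nu$ is well-defined and of the right degree; computing it explicitly pins down the normalization constants. I would therefore treat the determinant evaluation as the technical heart and organize the rest of the argument around it.
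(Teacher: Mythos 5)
You've correctly identified that the determinant computation feeds Theorem \ref{MainIntroTheorem}, which in turn shows $c_\nu$ is a polynomial in the Casimir and Euler eigenvalues, and your verification of symmetry and the degree bound is sound. But the plan to pin down $c_\nu(\mu)$ directly by the four conditions characterizing $P_\nu^\rho$ has a genuine gap for general $\nu$, and it is exactly the gap the paper warns about immediately after stating Theorem \ref{CapEvalThm}. The uniqueness of $P_\nu^\rho$ requires the vanishing $P_\nu^\rho(\mu+\rho)=0$ for \emph{all} $\mu\in\mathbb{P}$ with $|\mu|\le|\nu|$ and $\mu\neq\nu$. Your argument supplies $c_\nu(\mu)=0$ only for those $\mu$ that lie in $\Lambda_n^*$, i.e.\ for which $V_\mu$ actually appears in the Fischer decomposition. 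When $|\nu|>2n+1$, there are $\mu\in\mathbb{P}$ with $|\mu|\le|\nu|$ but $\mu_2<\lfloor |\mu|/2\rfloor-n$, so $\mu\notin\Lambda_n^*$; these supply no vanishing conditions, and the ones you do have are not enough to force $c_\nu=P_\nu^\rho/P_\nu^\rho(\nu+\rho)$. (The dimension of $\mathcal{P}^k(V)$ as an $\osp(1|2n)$-module has at most $n+1$ summands, and this bound does not grow with $k$.) Your proposal does not address this.

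The paper's fix, which your outline omits, is the $\mathfrak{sl}_2(\C)$-triple $\{E+\frac{1-2n}{2},\frac12 R^2,-\frac12\nabla^2\}$. Lemma \ref{CapRecursion} gives $2\lambda_2(2\lambda_1-2n-1)D_\lambda=R^2D_\nu\nabla^2$ with $\nu=(\lambda_1-1,\lambda_2-1)$, and iterating (Proposition \ref{CapEvalRecursive}, Corollary \ref{RecurEvalEq}) reduces the eigenvalue $c_\lambda(\mu)$ of an arbitrary $D_\lambda$ to $c_\nu(\eta)$ with $\nu$ harmonic, i.e.\ $\nu_2=0$ and hence $|\nu|\le 2n+1$. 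Only for such $\nu$ does the vanishing-plus-uniqueness argument go through (Lemma \ref{EigenProperties}, since $\bigcup_{k\le 2n+1}\mathbb{P}_{k,n}^*$ is all of $\{|\mu|\le 2n+1\}$), yielding the closed form $b$ (Proposition \ref{CapHarmFormula}). The recursion then contributes the prefactor $a$, and the Knop--Sahi identity $P_\nu^\rho(x,y)=xyP_{\nu'}^\rho(x-1,y-1)$ reassembles the pieces into the stated formula. So the determinant is not, as you suggested, what pins down the explicit closed form; that comes from the harmonic case together with the $R^2$--$\nabla^2$ recursion, neither of which your proposal invokes.
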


	Even though the formula obtained for eigenvalues is typical of Capelli operators, i.e. Knop-Sahi polynomials for a specific $\rho$, we note that the standard technique of verifying the vanishing conditions (used in \cite{SSS20}) does not apply. The reason is that for $\nu$ such that $|\nu|>2n+1$, there are not enough vanishing conditions to allow us to use the uniqueness of the Knop-Sahi polynomials. However, we circumvent this issue by reducing the formula for $D_\nu$ to one for another operator, for which there are enough vanishing conditions.

	The proof of Theorem \ref{CapEvalThm} needs Theorem \ref{MainIntroTheorem} below, which answers the abstract Capelli problem (Problem \ref{AbsCapProb}) for the action of $\frak{gosp}(1|2n)$ on $\C^{1|2n}$ affirmatively. Define
		\begin{align}\label{CZPowersUpTod}
			\PP_{k,n}&:=\lbrace \mu\in\PP\colon |\mu|=k\textrm{ and }\mu_2\leq n \rbrace\quad\textrm{and}\quad \Lambda_{d,n}:=\bigcup_{k=0}^d\PP_{k,n}.
		\end{align}
Let $C$ denote the Casimir operator of $\osp(1|2n)$. 

\begin{theorem}\label{MainIntroTheorem}
	Let $d\in\Z_{\geq 0}$ and $\nu\in \Lambda_{d,n}^*$. The Capelli operator $D_\nu$ can be expressed as a linear combination of elements from the image of $B_{d,n}$ under the map \eqref{AbsCapProb}, where
	\begin{align}\label{CzUnderD}
		B_{d,n}:=\lbrace C^{\mu_2}Z^{\mu_1-\mu_2}\colon \mu\in \Lambda_{d,n}\rbrace.
	\end{align}
\end{theorem}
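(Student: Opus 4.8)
The plan is to prove Theorem \ref{MainIntroTheorem} by a dimension-count/triangularity argument comparing two bases of the same space. Since $\{D_\nu\}_{\nu\in\Lambda_n^*}$ is a basis of $\mathcal{PD}(V)^\g$ and $\mathcal{PD}^k(V)^\g$ is spanned by those $D_\nu$ with $|\nu|=k$, it suffices to show that the image $W_{d,n}$ of $\mathrm{span}(B_{d,n})$ under \eqref{AbsCapProb} has dimension at least $|\Lambda_{d,n}^*|$ and is contained in $\bigoplus_{k\le d}\mathcal{PD}^k(V)^\g$; then $W_{d,n}$ must contain each $D_\nu$ with $|\nu|\le d$ once we check the containment is an equality of spaces. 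First I would record that $C$ acts on $\mathcal{PD}(V)$ with nonnegative-degree shift and that $C^{\mu_2}Z^{\mu_1-\mu_2}$, having ``bidegree'' controlled by $\mu$, lands in $\bigoplus_{k\le |\mu|}\mathcal{PD}^k(V)^\g$, so $W_{d,n}\subseteq\bigoplus_{k\le d}\mathcal{PD}^k(V)^\g$; combined with the obvious inequality $|\Lambda_{d,n}|=\dim\bigoplus_{k\le d}\mathcal{PD}^k(V)^\g$ coming from \eqref{CapIso} and the parametrization, the theorem reduces to linear independence of the images of the monomials $C^{\mu_2}Z^{\mu_1-\mu_2}$.

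The key step is therefore to compute the eigenvalues of $C$ and $Z$ on each irreducible summand $V_\nu$ and to assemble them into a matrix whose nonvanishing determinant forces linear independence. Concretely, $Z=E$ acts on $V_\nu=R^{2\nu_2}\mathcal H_{\nu_1-\nu_2}$ by the scalar $|\nu|=\nu_1+\nu_2$, and the Casimir $C$ acts by a scalar $\gamma(\nu)$ computable from the highest weight $\lambda_{\nu_1-\nu_2}$ via the standard formula $\langle\lambda+2\rho_{\osp},\lambda\rangle$ (using Lemma \ref{HkIrred}); crucially $\gamma(\nu)$ depends on $\nu$ only through $\nu_1-\nu_2$ when $\nu_1-\nu_2\le 2n+1$, but the combination with the $Z$-eigenvalue $\nu_1+\nu_2$ separates points of $\Lambda_{d,n}$. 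Thus the image of $C^{\mu_2}Z^{\mu_1-\mu_2}$ in $\bigoplus_{\nu}\Hom_\g(V_\nu,V_\nu)$ is the diagonal operator with entries $\gamma(\nu)^{\mu_2}(\nu_1+\nu_2)^{\mu_1-\mu_2}$, and linear independence of these (as $\mu$ ranges over $\Lambda_{d,n}$) is exactly the statement that the $|\Lambda_{d,n}|\times|\Lambda_{d,n}|$ matrix with rows indexed by $\mu\in\Lambda_{d,n}$, columns indexed by $\nu\in\Lambda_{d,n}^*$, and entries $\gamma(\nu)^{\mu_2}(\nu_1+\nu_2)^{\mu_1-\mu_2}$ is nonsingular. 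Establishing this determinant is nonzero — it is a generalized (two-parameter) Vandermonde-type determinant with polynomial entries, and the relevant node parameters $(\gamma(\nu),\nu_1+\nu_2)$ must be shown to be in sufficiently general position — is the heart of the matter and, by the abstract's own admission, the ``explicit calculation of a certain determinant with polynomial entries'' on which everything rests.

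I expect the main obstacle to be precisely this determinant computation: the pairs $(\gamma(\nu),\nu_1+\nu_2)$ are not in generic position (for instance $\gamma(\nu)$ is constant along the lines $\nu_1-\nu_2=\text{const}$), so a naive bivariate-Vandermonde argument fails, and one must exploit the specific quadratic shape of $\gamma$ together with the shape of the index set $\Lambda_{d,n}^*$ (which, unlike the generic Capelli setting, does not stabilize as $d\to\infty$, per the Introduction). The likely route is to reorganize the matrix by the value of $m:=\nu_1-\nu_2$: for fixed $m$ the column entries become $\gamma_m^{\mu_2}(2\nu_2+m)^{\mu_1-\mu_2}$ with $\gamma_m$ a fixed scalar, so within each block one has a genuine Vandermonde in the variable $2\nu_2+m$ after factoring out powers of $\gamma_m$; a block-triangularization or Schur-complement argument, degree-counting in a suitable filtration, should then reduce the full nonsingularity to (i) distinctness of the $\gamma_m$ across the finitely many relevant values of $m$ and (ii) a Vandermonde in each block — with the bookkeeping to match $|\Lambda_{d,n}|$ against $|\Lambda_{d,n}^*|$ being where the cases $k\le 2n+1$ versus $k>2n+1$ of the Fischer decomposition, and hence the two different descriptions of the index sets, must be reconciled. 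Once the determinant is shown nonzero, the images of $B_{d,n}$ span a space of the correct dimension inside $\bigoplus_{k\le d}\mathcal{PD}^k(V)^\g$, forcing equality and hence expressibility of every $D_\nu$ with $|\nu|\le d$ as a linear combination of those images.
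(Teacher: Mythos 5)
Your high-level reduction is exactly the paper's: show that the images of the monomials $C^{\mu_2}Z^{\mu_1-\mu_2}$ are linearly independent in $\bigoplus_{k\le d}\mathcal{PD}^k(V)^\g$ by evaluating each on a vector in each $V_\nu$, obtain the matrix with entries $\gamma(\nu)^{\mu_2}(\nu_1+\nu_2)^{\mu_1-\mu_2}$, and match cardinalities via $|\Lambda_{d,n}|=|\Lambda_{d,n}^*|$ (Lemma~\ref{prelemma}). You also correctly identify that $Z$ acts by $|\nu|$ and that the Casimir eigenvalue depends on $\nu$ only through $\nu_1-\nu_2$. However, the step you delegate to a ``block-triangularization by $m:=\nu_1-\nu_2$, reducing to distinctness of the $\gamma_m$'' contains a real gap, and you also miss the paper's central trick.

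The distinctness claim is false: with $\ell_\nu=\min(\nu_1-\nu_2,\,2n+1-(\nu_1-\nu_2))$ one has $\gamma_m=(2n+1)\ell-\ell^2$ and hence $\gamma_m=\gamma_{2n+1-m}$, so roughly half the $m$-blocks share the same Casimir eigenvalue. A block decomposition indexed by $m$ therefore cannot be separated by $\gamma$ alone. What actually saves the day — and what the paper proves as Lemma~\ref{distNudistEll} — is a \emph{parity} observation: for $\nu\ne\nu'$ with $|\nu|=|\nu'|$ one has $\ell_\nu\ne\ell_{\nu'}$ (because $m+m'=2n+1$ is odd while $m\equiv m'\pmod 2$ within a fixed degree). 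So the correct organizing principle is ``within each degree the $\ell_\nu$ are distinct,'' not ``across blocks the $\gamma_m$ are distinct.'' The paper accordingly groups columns not by $m$ but by $j=\lfloor m/2\rfloor$ (Definition~\ref{Orderings}, the sets $\Lambda_{d,n,j}^*$), so each block mixes both parities of $m$ and produces a Vandermonde $V(2j,2j+1,\dots,d)$ over consecutive degrees; extracting it requires the telescoping column operations built around the sums $S_{\mu,\nu,j}$ and Lemma~\ref{StoJ1Lemma}, which your proposal does not anticipate.

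More fundamentally, the paper does not try to show the scalar matrix nonsingular directly. It replaces $n$ by an indeterminate $x$, forming $M_d$ with entries $\lambda_{\mu,\nu}(x)=((2x+1)\ell_\nu-\ell_\nu^2)^{\mu_2}(\nu_1+\nu_2)^{\mu_1-\mu_2}\in\Z[x]$. It then (a) shows the leading coefficient of $\det(M_d)$ is $\det(M_d')$ (Lemma~\ref{LeadingIsMatrx}), (b) proves $\det(M_d')\neq0$ via the $N_j$-telescoping (Theorem~\ref{MdScalarThm}), (c) exhibits explicit linear factors $(x-\tfrac{s}{2})$, $s=0,\dots,2n-2$, with multiplicities $f(d,s)$ coming from column differences (Lemmas~\ref{FactorLambda}, \ref{detMatrixds}), and (d) closes by degree-counting ($\sum_s f(d,s)=\sum_\mu\mu_2$, Lemma~\ref{SumEquality}) to conclude the factorization is complete (Theorem~\ref{MainTheorem}). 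Since every root $s/2$ is at most $n-1$, the evaluation at $x=n$ is nonzero. This polynomial-in-$x$ device — ``an explicit calculation of a certain determinant with polynomial entries'' flagged in the abstract — is what makes the nonvanishing tractable; your proposal, which works only with the scalar matrix and gestures at a generic-position argument that provably fails, does not supply a substitute.
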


	We remark that Theorem \ref{MainIntroTheorem} cannot be proven by the general technique of \cite{SSS20}.

%%%%%%%%%%%%%%%%%%%%%%%%%%%%%%
%
%	
\section{The matrices $M_d$ and $M_d'$.}
%
%
%%%%%%%%%%%%%%%%%%%%%%%%%%%%%%

The strategy to prove Theorem \ref{MainIntroTheorem} is as follows. We define a family of square matrices denoted by $M_d$, for $d\in \Z_{\geq 0}$, with coefficients in the polynomial ring $\Z[x]$. For all $d\in \Z_{\geq 0}$ the matrix $M_d$ will have the following property: if $\det(M_d)(n)\neq 0$ then $D_\nu$, for all $\nu\in \Lambda_{d,n}^*$, can be written as a linear combination of images of elements of $\emph{B}_{d,n}$.

We equip $\Lambda_{d,n}$, from \eqref{CZPowersUpTod}, and $\Lambda_{d,n}^*$, from \eqref{IrredUptoD}, with the following total orderings. 

\begin{defin}\label{Orderings}
	For $\mu,\mu'\in \Lambda_{d,n}$, define
	\begin{align*}
		\mu\preceq \mu' \textrm{ if and only if } \mu_2<\mu_2'\textrm{ or }(\mu_2=\mu_2'\textrm{ and } \mu_1\leq \mu_1')
	\end{align*}
and for $\nu,\nu'\in \Lambda_{d,n}^*$ define
	\begin{align*}
		\nu \leqslant \nu' \textrm{ if and only if } \left\lfloor \frac{\nu_1-\nu_2}{2}\right\rfloor< \left\lfloor \frac{\nu_1'-\nu_2'}{2} \right\rfloor\textrm{ or }\left( \left\lfloor \frac{\nu_1-\nu_2}{2}\right\rfloor=\left\lfloor \frac{\nu_1'-\nu_2'}{2} \right\rfloor\textrm{ and } |\nu|\leq |\nu'|\right).
	\end{align*}
\end{defin}

\begin{lemma}\label{prelemma}
	For any $d\in \Z_{\geq 0}$, we have $|\Lambda_{d,n}|=|\Lambda_{d,n}^*|.$
\end{lemma}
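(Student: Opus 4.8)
The plan is to establish the equality $|\Lambda_{d,n}| = |\Lambda_{d,n}^*|$ by exhibiting an explicit bijection between the two index sets, working level by level in the degree $k$ for $0 \le k \le d$; it suffices to show $|\PP_{k,n}| = |\PP_{k,n}^*|$ for each fixed $k$, since both $\Lambda_{d,n}$ and $\Lambda_{d,n}^*$ are disjoint unions over $k = 0,\dotsc,d$ of these level sets. First I would write out both sets concretely. An element $\mu \in \PP_{k,n}$ is a pair $(\mu_1,\mu_2)$ with $\mu_1 \ge \mu_2 \ge 0$, $\mu_1 + \mu_2 = k$, and $\mu_2 \le n$; since $\mu_1 = k - \mu_2 \ge \mu_2$ forces $\mu_2 \le \lfloor k/2 \rfloor$, the set is parametrized by $\mu_2 \in \{0,1,\dotsc,\min(\lfloor k/2\rfloor, n)\}$, so $|\PP_{k,n}| = \min(\lfloor k/2 \rfloor, n) + 1$. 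Similarly $\nu \in \PP_{k,n}^*$ satisfies $\nu_1 \ge \nu_2 \ge 0$, $\nu_1 + \nu_2 = k$, and $\nu_2 \ge \lfloor k/2 \rfloor - n$; combined with $0 \le \nu_2 \le \lfloor k/2 \rfloor$, the parameter $\nu_2$ ranges over $\{\max(0, \lfloor k/2\rfloor - n), \dotsc, \lfloor k/2 \rfloor\}$, giving $|\PP_{k,n}^*| = \lfloor k/2 \rfloor - \max(0, \lfloor k/2 \rfloor - n) + 1 = \min(\lfloor k/2 \rfloor, n) + 1$.

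Comparing the two counts shows $|\PP_{k,n}| = |\PP_{k,n}^*| = \min(\lfloor k/2\rfloor, n) + 1$ for every $k \ge 0$, and summing over $0 \le k \le d$ yields the claim. If one prefers an explicit bijection rather than a counting argument (which may be cleaner to state and also foreshadows the matching of rows and columns of $M_d$), the map $\PP_{k,n} \to \PP_{k,n}^*$ sending $(\mu_1,\mu_2)$ with $\mu_2 = j$ to the element $\nu$ with $\nu_2 = \lfloor k/2 \rfloor - (\min(\lfloor k/2\rfloor,n) - j)$, i.e. re-indexing the parameter $j \in \{0,\dotsc,m_k\}$ with $m_k := \min(\lfloor k/2\rfloor, n)$ to the parameter $\lfloor k/2\rfloor - m_k + j$, is a bijection between the two ranges of admissible second coordinates at level $k$; assembling these over $k$ gives a bijection $\Lambda_{d,n} \to \Lambda_{d,n}^*$.

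There is essentially no obstacle here: the only point requiring any care is the interaction of the floor function $\lfloor k/2 \rfloor$ with the constraints $\mu_2 \le n$ and $\nu_2 \ge \lfloor k/2 \rfloor - n$, and in particular checking that the lower endpoint $\max(0,\lfloor k/2\rfloor - n)$ is handled correctly in the two regimes $k \le 2n+1$ (where $\lfloor k/2\rfloor \le n$, so the floor constraint is active in $\PP_{k,n}$ and the max is $0$) and $k > 2n+1$ (where $\lfloor k/2 \rfloor > n$, the floor constraint is active in $\PP_{k,n}^*$, and the max is $\lfloor k/2\rfloor - n$). In both regimes one checks directly that $m_k = \min(\lfloor k/2\rfloor, n)$ correctly records the common cardinality minus one, so the argument goes through uniformly.
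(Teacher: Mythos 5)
Your proof is correct and follows essentially the same approach as the paper, which likewise reduces the claim to the per-degree count $|\PP_{k,n}|=\min(\lfloor k/2\rfloor,n)+1=|\PP_{k,n}^*|$; you have simply filled in the details that the paper leaves to the reader (and offered an explicit bijection as an optional alternative).
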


\begin{proof}
	This follows from the definition of $\Lambda_{d,n}^*$ and $\Lambda_{d,n}$ and the observation that for all $k\in \Z_{\geq 0}$, $|\PP_{k,n}|=\min\left(\left\lfloor \frac{k}{2}\right\rfloor,n  \right)+1=|\PP_{k,n}^*|.$
\end{proof}

\begin{rmrk}
	In particular, Lemma \ref{prelemma} implies $|B_{d,n}|=|\lbrace D_\nu \colon \nu\in \Lambda_{d,n}^*\rbrace|$.
\end{rmrk}

For $0\leq j\leq \min\left(\lfloor \frac{d}{2}\rfloor, n\right)$ define a family of subsets of $\Lambda_{d,n}$ and $\Lambda_{d,n}^*$ by
	\begin{align}\label{Subindexes}
		\Lambda_{d,n,j}&:=\lbrace \mu\in \Lambda_{d,n}\colon \mu_2\geq j\rbrace,\\
		\Lambda_{d,n,j}^*&:=\left\lbrace \nu\in \Lambda_{d,n}^*\colon \nu_2\leq \left\lfloor \frac{|\nu|}{2}\right\rfloor-j \right\rbrace,\nonumber
	\end{align}
and 
	\begin{align}\label{BoundarySubindexes}
		\partial\Lambda_{d,n,j}&:=\lbrace (\mu_1,j)\colon  \mu_1=j,j+1,\dotsc, d-j \rbrace,\\
		\partial\Lambda_{d,n,j}^*&:=\left\lbrace \left(k-\left\lfloor\frac{k}{2}\right\rfloor+j,\left\lfloor \frac{k}{2}\right\rfloor-j\right)\colon k=2j,2j+1,\dotsc, d \right\rbrace.\nonumber
	\end{align}
Observe that the following unions are disjoint:
	\begin{align}\label{LamdnjDecomp}
		\Lambda_{d,n,j}&=\Lambda_{d,n,j+1}\cup \partial\Lambda_{d,n,j}\\
		\Lambda_{d,n,j}^*&=\Lambda_{d,n,j+1}^*\cup \partial\Lambda_{d,n,j}^*.\nonumber
	\end{align}
The next corollary follows inductively from Equations \eqref{LamdnjDecomp}, Lemma \ref{prelemma} and the equalities $\Lambda_{d,n}=\Lambda_{d,n,0}$ and $\Lambda_{d,n}^*=\Lambda_{d,n,0}^*$.

\begin{corollary}
	We have $|\Lambda_{d,n,j}|=|\Lambda_{d,n,j}^*|$, for $0\leq j\leq \min\left(\lfloor \frac{d}{2}\rfloor, n\right)$.
\end{corollary}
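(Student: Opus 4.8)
The plan is to prove the corollary by induction on $j$, descending from the ``top'' index $j_{\max}:=\min\left(\lfloor\frac{d}{2}\rfloor,n\right)$ down to $j=0$, using the two disjoint decompositions in \eqref{LamdnjDecomp} together with Lemma~\ref{prelemma}. The idea is that once we know $|\Lambda_{d,n,j+1}|=|\Lambda_{d,n,j+1}^*|$, the two equalities
\begin{align*}
	|\Lambda_{d,n,j}|=|\Lambda_{d,n,j+1}|+|\partial\Lambda_{d,n,j}|,\qquad
	|\Lambda_{d,n,j}^*|=|\Lambda_{d,n,j+1}^*|+|\partial\Lambda_{d,n,j}^*|,
\end{align*}
which follow from the disjointness of the unions, reduce the claim to the single boundary identity $|\partial\Lambda_{d,n,j}|=|\partial\Lambda_{d,n,j}^*|$. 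From the explicit descriptions in \eqref{BoundarySubindexes}, the set $\partial\Lambda_{d,n,j}$ is indexed by $\mu_1\in\{j,j+1,\dots,d-j\}$, hence has $d-2j+1$ elements, while $\partial\Lambda_{d,n,j}^*$ is indexed by $k\in\{2j,2j+1,\dots,d\}$, hence has $d-2j+1$ elements as well; so the boundary pieces have equal cardinality for every admissible $j$.

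First I would record the base case. For $j=j_{\max}$ one checks directly that $\Lambda_{d,n,j_{\max}}$ and $\Lambda_{d,n,j_{\max}}^*$ are both empty or both equal to their boundary pieces, depending on parity; more cleanly, one can run the induction starting from the observation of Lemma~\ref{prelemma} that $|\PP_{k,n}|=|\PP_{k,n}^*|$ for every $k$, which is exactly the statement $|\Lambda_{d,n,0}|=|\Lambda_{d,n,0}^*|$ after noting $\Lambda_{d,n}=\bigcup_{k=0}^d\PP_{k,n}$ and similarly for the starred version. Then the cleanest route is actually a finite telescoping argument rather than an induction in the traditional sense: from \eqref{LamdnjDecomp} we get, for each $j$,
\begin{align*}
	|\Lambda_{d,n,j}|-|\Lambda_{d,n,j}^*|=\bigl(|\Lambda_{d,n,j+1}|-|\Lambda_{d,n,j+1}^*|\bigr)+\bigl(|\partial\Lambda_{d,n,j}|-|\partial\Lambda_{d,n,j}^*|\bigr),
\end{align*}
and since the second summand on the right vanishes by the boundary count above, the difference $|\Lambda_{d,n,j}|-|\Lambda_{d,n,j}^*|$ is independent of $j$. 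As it equals $0$ for $j=0$ by Lemma~\ref{prelemma} (equivalently vanishes for $j=j_{\max}+1$, where both sets are empty), it is $0$ for all $j$ in range.

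The only genuine content is verifying that the four sets in \eqref{Subindexes} and \eqref{BoundarySubindexes} really do decompose as claimed in \eqref{LamdnjDecomp} with the stated disjointness, and that the boundary cardinalities match; I expect the mild bookkeeping around floor functions in the definition of $\partial\Lambda_{d,n,j}^*$ — confirming that $\nu=(k-\lfloor k/2\rfloor+j,\lfloor k/2\rfloor-j)$ indeed lies in $\Lambda_{d,n}^*$ (i.e. that $\nu_2\geq\lfloor|\nu|/2\rfloor-n$, which holds since $\nu_2=\lfloor k/2\rfloor-j\geq\lfloor k/2\rfloor-n$) and that it exhausts exactly the elements of $\Lambda_{d,n,j}^*$ with $\nu_2=\lfloor|\nu|/2\rfloor-j$ — to be the main, though still routine, obstacle. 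None of this requires any new ideas beyond careful inspection of the definitions, so the corollary follows immediately.
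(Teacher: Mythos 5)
Your proposal is correct and follows essentially the same route the paper gestures at: induction (presented as a telescoping sum) on $j$ via the disjoint decompositions in \eqref{LamdnjDecomp}, anchored to Lemma~\ref{prelemma} at $j=0$. You make explicit the one detail the paper leaves implicit — that both boundary pieces $\partial\Lambda_{d,n,j}$ and $\partial\Lambda_{d,n,j}^*$ have exactly $d-2j+1$ elements — which is indeed what makes the induction step go through.
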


\begin{defin}\label{PolyEntries}
For $\mu\in \Lambda_{d,n}$ and $\nu\in \Lambda_{d,n}^*$, set 
	\begin{align}\label{EllCoeff}
		\ell_{\nu}:=\min (\nu_1-\nu_2,2n+1-(\nu_1-\nu_2))
	\end{align}
and set
	\begin{align}\label{LambdaCoeff}
		\lambda_{\mu,\nu}(x)&:=((2x+1)\ell_{\nu}-\ell_{\nu}^2)^{\mu_2}(\nu_1+\nu_2)^{\mu_1-\mu_2}\in \Z[x].
	\end{align}
\end{defin}

\begin{defin}
	Let $M_d$ and $M_d'$ be square matrices defined by
		\begin{align}\label{Mdmatrix}
			M_d:=[\lambda_{\mu,\nu}]_{\substack{\mu\in\Lambda_{d,n}\\ \nu\in \Lambda_{d,n}^*}}
		\end{align}
	and
		\begin{align}\label{MdScalarMatrix}
			M_d':=[(2\ell_{\nu})^{\mu_2}(\nu_1+\nu_2)^{\mu_1-\mu_2} ]_{\substack{\mu\in\Lambda_{d,n}\\ \nu\in \Lambda_{d,n}^*}}.
		\end{align}
	The rows and columns of $M_d$ and $M_d'$ are ordered according to the total orderings given in Definition \ref{Orderings}.
\end{defin}

Notice that the entries of $M_d'$ are the leading coefficients of the polynomials $\lambda_{\mu,\nu}$.

\begin{rmrk}
	For $\nu\in \Lambda_n^*$, the Casimir operator $C$ acts on $V_\nu$ by the scalar $(2n+1)\ell_\nu-\ell_{\nu}^2.$ Furthermore, for any $\mu\in \PP$ the central element $C^{\mu_2}Z^{\mu_1-\mu_2}$ of  $\frak{U}(\g)$ acts on $V_\nu$ by $\lambda_{\mu,\nu}(n).$
\end{rmrk}

%\begin{egg}
%We have the following example of $M_d$ and $M_d'$ for $d=3$ and $n\geq 3$.
%	\begin{align*}
%		M_3&=\left[\begin{array}{rrrrrr}
%1 & 1 & 1 & 1 & 1 & 1 \\
%0 & 1 & 2 & 3 & 2 & 3 \\
%0 & 1 & 4 & 9 & 4 & 9 \\
%0 & 1 & 8 & 27 & 8 & 27 \\
%0 & 2 x & 0 & 2x & 4 x-2 & 6 x - 6 \\
%0 & 2 x & 0 & 6x & 8 x-4 & 18 x - 18
%\end{array}\right]\quad M_3'=\left[\begin{array}{rrrrrr}
%1 & 1 & 1 & 1 & 1 & 1 \\
%0 & 1 & 2 & 3 & 2 & 3 \\
%0 & 1 & 4 & 8 & 4 & 9 \\
%0 & 1 & 8 & 27 & 8 & 27 \\
%0 & 2 & 0 & 2 & 4 & 6 \\
%0 & 2 & 0 & 6 & 8 & 18
%\end{array}\right]\\
%%			M_4&=\left[\begin{array}{rrrrrrrrr}
%%1 & 1 & 1 & 1 & 1 & 1 & 1 & 1 & 1 \\
%%0 & 1 & 2 & 3 & 4 & 2 & 3 & 4 & 4 \\
%%0 & 1 & 4 & 9 & 16 & 4 & 9 & 16 & 16 \\
%%0 & 1 & 8 & 27 & 64 & 8 & 27 & 64 & 64 \\
%%0 & 1 & 16 & 81 & 256 & 16 & 81 & 256 & 256 \\
%%0 & 2 x & 0 & 2x & 0 & 4x -2 & 6x-6 & 4 x - 2 & 8 x - 12 \\
%%0 & 2 x & 0 & 6x & 0 & 8x - 4 & 18x-18 & 16 x - 8 & 32 x - 48 \\
%%0 & 2 x & 0 & 18x & 0 & 16 x - 8 & 54x-54 & 64 x - 32 & 128 x - 192 \\
%%0 & 4 x^{2} & 0 & 4 x^{2} & 0& 16x^2-16x+4 & 36x^2-72x+36 & 16 x^{2} - 16 x + 4 & 64 x^{2} - 192 x + 144
%%\end{array}\right]\\
%%M_4'&=\left[\begin{array}{rrrrrrrrr}
%%1 & 1 & 1 & 1 & 1 & 1 & 1 & 1 & 1 \\
%%0 & 1 & 2 & 3 & 4 & 2 & 3 & 4 & 4 \\
%%0 & 1 & 4 & 9 & 16 & 4 & 9 & 16 & 16 \\
%%0 & 1 & 8 & 27 & 64 & 8 & 27 & 64 & 64 \\
%%0 & 1 & 16 & 81 & 256 & 16 & 81 & 256 & 256 \\
%%0 & 2 & 0 & 2 & 0 & 4 & 6 & 4 & 8 \\
%%0 & 2 & 0 & 6 & 0 & 8 & 18 & 16 & 32  \\
%%0 & 2 & 0 & 18 & 0 & 16 & 54 & 64 & 128 \\
%%0 & 4 & 0 & 4 & 0 & 16 & 36 & 16 & 64
%%\end{array}\right]
%%	
%\end{align*}
%\end{egg}

\begin{lemma}\label{LeadingIsMatrx}
If $\det(M_d')\neq 0$, then $\det(M_d)=\det(M_d')p(x)$ where $p(x)\in \Q[x]$ is monic.
\end{lemma}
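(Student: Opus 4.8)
The plan is to compare the matrix $M_d$ of polynomials with the matrix $M_d'$ of their leading coefficients by looking at the Leibniz expansion of $\det(M_d)$. Each entry $\lambda_{\mu,\nu}(x) = ((2x+1)\ell_\nu - \ell_\nu^2)^{\mu_2}(\nu_1+\nu_2)^{\mu_1-\mu_2}$ is a polynomial in $x$ of degree exactly $\mu_2$ (when $\ell_\nu \neq 0$; one must track the degenerate case $\ell_\nu = 0$ separately, where the entry is the constant $(\nu_1+\nu_2)^{\mu_1-\mu_2}$ and equals the corresponding entry of $M_d'$), with leading coefficient $(2\ell_\nu)^{\mu_2}(\nu_1+\nu_2)^{\mu_1-\mu_2}$, which is precisely the $(\mu,\nu)$ entry of $M_d'$. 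Crucially, the $x$-degree of $\lambda_{\mu,\nu}(x)$ depends only on the row index $\mu$ (through $\mu_2$), not on the column index $\nu$. So $M_d(x) = D(x)\, M_d' + (\text{lower-order row-wise corrections})$, where $D(x)$ is the diagonal matrix with entries $x^{\mu_2}$ (up to the $2^{\mu_2}$ factors already absorbed into $M_d'$); more precisely, write $\lambda_{\mu,\nu}(x) = (2\ell_\nu)^{\mu_2}(\nu_1+\nu_2)^{\mu_1-\mu_2} x^{\mu_2} + (\text{terms of degree} < \mu_2 \text{ in } x)$.

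The key step is then the following observation about the Leibniz formula: $\det(M_d)(x) = \sum_{\sigma} \operatorname{sgn}(\sigma) \prod_{\mu} \lambda_{\mu,\sigma(\mu)}(x)$. For each permutation $\sigma$, the product $\prod_\mu \lambda_{\mu,\sigma(\mu)}(x)$ has $x$-degree at most $\sum_\mu \mu_2 =: N$, with equality, and the coefficient of $x^N$ in that product equals $\operatorname{sgn}(\sigma)^{-1}$ times the corresponding term in the Leibniz expansion of $\det(M_d')$. Summing over $\sigma$, the coefficient of $x^N$ in $\det(M_d)(x)$ is exactly $\det(M_d')$. Since by hypothesis $\det(M_d') \neq 0$, the polynomial $\det(M_d)(x)$ has degree exactly $N$ and leading coefficient $\det(M_d')$, so $\det(M_d)(x) = \det(M_d')\, p(x)$ with $p(x) = \det(M_d)(x)/\det(M_d') \in \Q[x]$ monic of degree $N$. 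One should also note that $\det(M_d)(x) \in \Z[x]$ (since $M_d$ has entries in $\Z[x]$), so $p(x) \in \Q[x]$, though not necessarily in $\Z[x]$.

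I expect the main obstacle to be purely bookkeeping rather than conceptual: one must be careful that the leading coefficient $(2\ell_\nu)^{\mu_2}(\nu_1+\nu_2)^{\mu_1-\mu_2}$ is nonzero exactly when $M_d'$ has that entry nonzero, and handle uniformly the rows $\mu$ with $\mu_2 = 0$ (constant rows, identical in $M_d$ and $M_d'$) and the columns with $\ell_\nu = 0$. In the latter case the entry $\lambda_{\mu,\nu}(x)$ is a constant, so its $x$-degree is $0$ rather than $\mu_2$; this means the naive claim "$\deg_x \lambda_{\mu,\nu} = \mu_2$ for all $\nu$" fails, and one has to argue that such a column contributes the same (constant) factor to every term, so the degree count in the Leibniz expansion still gives coefficient of $x^{N'}$ equal to $\det(M_d')$ for the appropriate $N' = \sum_\mu \mu_2$ taken over rows that actually contribute an $x^{\mu_2}$, matching the structure of $M_d'$. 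In fact the cleanest formulation is: $M_d(x) = \Delta(x) M_d' + R(x)$ where $\Delta(x) = \operatorname{diag}(x^{\mu_2})$ and every entry of $R(x)$ in row $\mu$ has $x$-degree strictly less than $\mu_2$; then $\det(M_d)(x) = \big(\prod_\mu x^{\mu_2}\big)\det(M_d') + (\text{lower order})$ follows by multilinearity of the determinant in the rows, and one reads off both the degree and the leading coefficient immediately.
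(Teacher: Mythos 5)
Your proof is correct and takes essentially the same approach as the paper: both observe that the $x$-degree of each entry $\lambda_{\mu,\nu}$ depends only on the row index $\mu$ (being $\mu_2$), and then extract the coefficient of $x^{\sum_\mu \mu_2}$ from the Leibniz expansion to identify it with $\det(M_d')$. Your more careful handling of the degenerate case $\ell_\nu=0$ (and your reformulation $M_d = \Delta(x)M_d' + R(x)$ with multilinearity in rows) tightens a small imprecision in the paper's one-line claim that $\deg \lambda_{\mu,\nu}=\mu_2$ always, but the argument is substantively the same.
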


\begin{proof}
	Notice that $\deg(\lambda_{\mu,\nu}(x))=\mu_2$, for all $\mu\in \Lambda_{d,n}$ and all $\nu\in \Lambda_{d,n}^*$. Furthermore, any monomial in the Leibniz determinant formula of $M_d$ is a product of distinct row entries from $M_d$. That is, all the non-zero monomials in this formula have the same degree, which is equal to $\sum_{\mu\in \Lambda_{d,n}} \mu_2.$ Thus if $\det(M_d')\neq 0$ then $\det(M_d')$ is the leading coefficient of $\det(M_d)$.
\end{proof}

\begin{rmrk}\label{DegreeRmrk}
	By the proof of Lemma \ref{LeadingIsMatrx}, if $\det(M_d')\neq 0$ then $\deg(\det(M_d))= \sum_{\mu\in \Lambda_{d,n}} \mu_2.$
\end{rmrk}

The remainder of this section is dedicated to showing that $\det(M_d')\neq 0$. The final results are given in Theorem \ref{MdScalarThm}.

\begin{defin}
For $\nu\in \Lambda_{d,n}^*$ and $0\leq j\leq \lfloor\frac{|\nu|}{2}\rfloor$, define
	\begin{align*}
		\nu_{(j)}:=\left(|\nu|-\left\lfloor\frac{|\nu|}{2}\right\rfloor+j,\left\lfloor\frac{|\nu|}{2}\right\rfloor-j\right)\in \partial\Lambda_{d,n,j}^*.
	\end{align*}
\end{defin}

\begin{corollary}\label{NuSubjCorollary}
	Let $\nu\in \Lambda_{d,n}^*$, $0\leq j\leq \lfloor \frac{|\nu|}{2}\rfloor$, and set $\nu'=\nu_{(j)}$. Then $\nu_{(k)}'=\nu_{(k)}$ for $0\leq k\leq \lfloor\frac{|\nu|}{2}\rfloor$.
\end{corollary}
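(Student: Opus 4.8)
The plan is to unwind the definition of $\nu_{(j)}$ and to observe that the assignment $\nu\mapsto \nu_{(j)}$ leaves the quantity $|\cdot|$ unchanged; once this is known, the claim is immediate. Concretely, the first step is the computation $|\nu_{(j)}|=|\nu|$: by definition the two coordinates of $\nu_{(j)}$ are $|\nu|-\lfloor |\nu|/2\rfloor+j$ and $\lfloor |\nu|/2\rfloor -j$, and their sum is $|\nu|$. In particular $\lfloor |\nu'|/2\rfloor=\lfloor |\nu|/2\rfloor$ for $\nu'=\nu_{(j)}$, so that $\nu'_{(k)}$ is defined for exactly the same range $0\leq k\leq \lfloor |\nu|/2\rfloor$ appearing in the statement.

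The second step is to make sure $\nu'=\nu_{(j)}$ really is an element of $\Lambda_{d,n}^*$, so that $\nu'_{(k)}$ makes sense. This is recorded already in the definition of $\nu_{(j)}$, which asserts $\nu_{(j)}\in \partial\Lambda_{d,n,j}^*$; combined with \eqref{LamdnjDecomp}, which shows each $\Lambda_{d,n,j+1}^*\subseteq\Lambda_{d,n,j}^*$, and the equality $\Lambda_{d,n}^*=\Lambda_{d,n,0}^*$, this gives $\partial\Lambda_{d,n,j}^*\subseteq \Lambda_{d,n,j}^*\subseteq \Lambda_{d,n}^*$. (Alternatively, one checks directly that $2j\leq |\nu|\leq d$, using $\nu\in\Lambda_{d,n}^*$ for the upper bound and $j\leq \lfloor |\nu|/2\rfloor$ for the lower bound, which places $\nu_{(j)}$ in $\partial\Lambda_{d,n,j}^*$ by the indexing in \eqref{BoundarySubindexes}.)

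Finally, since the definition of $\mu_{(k)}$ for $\mu\in\Lambda_{d,n}^*$ depends only on $|\mu|$ and $k$, and not on the individual coordinates of $\mu$, the identity $|\nu'|=|\nu|$ from the first step yields
\[
\nu'_{(k)}=\left(|\nu'|-\left\lfloor\tfrac{|\nu'|}{2}\right\rfloor+k,\ \left\lfloor\tfrac{|\nu'|}{2}\right\rfloor-k\right)=\left(|\nu|-\left\lfloor\tfrac{|\nu|}{2}\right\rfloor+k,\ \left\lfloor\tfrac{|\nu|}{2}\right\rfloor-k\right)=\nu_{(k)}
\]
for all $k$ in the stated range, which is the assertion. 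I do not expect any genuine obstacle here: the corollary is a bookkeeping statement whose entire content is the size-preservation $|\nu_{(j)}|=|\nu|$, and the only point requiring a moment's care is verifying that the construction $\nu\mapsto\nu_{(j)}$ stays inside $\Lambda_{d,n}^*$ with an unshrunk admissible range of $k$ — which is exactly what the first two steps establish.
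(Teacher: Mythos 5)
Your proof is correct and is essentially the argument the paper leaves implicit (no proof is given for this corollary): the point is exactly that $\nu_{(j)}$ depends only on $|\nu|$ and $j$, together with the computation $|\nu_{(j)}|=|\nu|$, so the identity $\nu'_{(k)}=\nu_{(k)}$ is immediate. Your additional check that $\nu'=\nu_{(j)}$ stays in $\Lambda_{d,n}^*$ (via $\partial\Lambda_{d,n,j}^*\subseteq\Lambda_{d,n}^*$) is careful bookkeeping that the paper takes for granted.
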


\begin{lemma}\label{distNudistEll}
	Let $\nu,\nu'\in \Lambda_{d,n}^*$, such that $\nu\neq \nu'$ and $|\nu|=|\nu'|$. Then $\ell_{\nu}\neq \ell_{\nu'}.$
\end{lemma}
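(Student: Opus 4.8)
The strategy is to reduce the claim to a statement about the map $\nu \mapsto \nu_1 - \nu_2$ on the set $\Lambda_{d,n}^*$, restricted to the fibre over a fixed value $|\nu| = |\nu'| = k$, and then to exploit the fact that $\ell_\nu$ depends on $\nu$ only through $m := \nu_1 - \nu_2$ via the formula $\ell_\nu = \min(m, 2n+1-m)$. First I would observe that once $|\nu|$ is fixed, the pair $\nu$ is determined by $m = \nu_1 - \nu_2$ (since $\nu_1 = (k+m)/2$ and $\nu_2 = (k-m)/2$ and $k, m$ have the same parity). Hence $\nu \neq \nu'$ with $|\nu| = |\nu'| = k$ forces $m \neq m'$, where $m' := \nu_1' - \nu_2'$. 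So the whole question becomes: for two distinct nonnegative integers $m, m'$ of the same parity, both occurring as $\nu_1 - \nu_2$ for elements of $\Lambda_{k,n}^*$, is it possible that $\min(m, 2n+1-m) = \min(m', 2n+1-m')$?

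Next I would record the constraint that membership in $\mathbb{P}_{k,n}^*$ imposes, namely $\nu_2 \geq \lfloor k/2 \rfloor - n$, equivalently $m = \nu_1 - \nu_2 \leq 2\lceil k/2\rceil - (\lfloor k/2\rfloor - n)$; a short computation shows this amounts to $m \leq 2n+1$ when $k$ is odd and $m \leq 2n$ (in fact $m$ even forces $m \le 2n$, but the relevant bound is) when $k$ is even — in either case $0 \le m \le 2n+1$. The function $m \mapsto \min(m, 2n+1-m)$ on the integer interval $[0, 2n+1]$ is symmetric about the half-integer $n + \tfrac12$: it satisfies $\min(m, 2n+1-m) = \min(m', 2n+1-m')$ if and only if $m = m'$ or $m + m' = 2n+1$. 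The first alternative is excluded, so equality of $\ell$'s would force $m + m' = 2n+1$. But $m + m' = 2n+1$ is odd, which is impossible when $m$ and $m'$ have the same parity. This contradiction finishes the proof.

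The bookkeeping I expect to be the only mild obstacle is checking carefully that the admissibility condition $\nu_2 \geq \lfloor k/2 \rfloor - n$ really does confine $m = \nu_1 - \nu_2$ to the range $[0, 2n+1]$ (being slightly careful about the even versus odd $k$ cases and about the fact that $\nu_2 \ge 0$ already forces $m \le |\nu| = k$, which may or may not be the binding constraint), so that the symmetry analysis of $\min(m, 2n+1-m)$ applies. Beyond that, the argument is elementary: the two facts doing the work are the parity coincidence of $m$ and $m'$ (forced by $|\nu| = |\nu'|$) and the parity of $2n+1$. I would present it in that order: first the reduction to integers $m \neq m'$, then the range bound, then the two-case description of when $\min(m, 2n+1-m)$ collides, then the parity contradiction.
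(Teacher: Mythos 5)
Your argument is correct and gives a clean, rigorous version of the case analysis the paper's proof merely sketches (``straightforward by contradiction and considering all the possibilities''): the reduction to $m=\nu_1-\nu_2$, the observation that $m,m'$ share the parity of $k$, the range bound $0\le m,m'\le 2n+1$ coming from $\nu_2\ge\max(0,\lfloor k/2\rfloor-n)$, and the resulting parity contradiction with $m+m'=2n+1$ are all sound. One small slip in the write-up: the displayed bound $m\le 2\lceil k/2\rceil-(\lfloor k/2\rfloor-n)$ should instead read $m=k-2\nu_2\le k-2(\lfloor k/2\rfloor-n)$, but the conclusions you draw from it ($m\le 2n+1$ for $k$ odd, $m\le 2n$ for $k$ even) are correct, so the argument is unaffected.
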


\begin{proof}
Straightforward by contradiction and considering all the possibilities of $\ell_\nu$ and $\ell_{\nu'}$ according to  Equation \eqref{EllCoeff}.
\end{proof}

\begin{corollary}\label{EllvNotEllvj}
	Let $1\leq j\leq \min\left( \lfloor \frac{d}{2}\rfloor ,n\right)-1$, let $\nu\in \Lambda_{d,n,j+1}^*$ and set $\nu'=\nu_{(j)}$. Then $\ell_\nu\neq \ell_{\nu'}.$
\end{corollary}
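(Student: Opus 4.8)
The plan is to reduce the statement to Lemma \ref{distNudistEll}, which already handles two distinct elements of $\Lambda_{d,n}^*$ of the same total degree. First I would observe that $\nu' = \nu_{(j)}$ has, by its very definition, total degree $|\nu'| = (|\nu| - \lfloor |\nu|/2\rfloor + j) + (\lfloor |\nu|/2\rfloor - j) = |\nu|$. So $\nu$ and $\nu'$ automatically satisfy the hypothesis $|\nu| = |\nu'|$ of Lemma \ref{distNudistEll}. Hence, to invoke that lemma and conclude $\ell_\nu \neq \ell_{\nu'}$, the only thing left to check is that $\nu \neq \nu'$, and then the corollary follows immediately.

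The core of the argument is therefore showing $\nu \neq \nu_{(j)}$ under the stated hypotheses, namely $\nu \in \Lambda_{d,n,j+1}^*$ and $1 \leq j \leq \min(\lfloor d/2\rfloor, n) - 1$. The second coordinate of $\nu_{(j)}$ is $\lfloor |\nu|/2\rfloor - j$. On the other hand, membership $\nu \in \Lambda_{d,n,j+1}^*$ means, by \eqref{Subindexes}, that $\nu_2 \leq \lfloor |\nu|/2\rfloor - (j+1) = \lfloor |\nu|/2\rfloor - j - 1$. Thus the second coordinates differ: $\nu_2 \leq \lfloor |\nu|/2\rfloor - j - 1 < \lfloor |\nu|/2\rfloor - j = (\nu_{(j)})_2$, which forces $\nu \neq \nu_{(j)} = \nu'$.

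Combining the two observations, $\nu$ and $\nu'$ are distinct elements of $\Lambda_{d,n}^*$ with $|\nu| = |\nu'|$, so Lemma \ref{distNudistEll} applies and yields $\ell_\nu \neq \ell_{\nu'}$, as claimed. There is no real obstacle here; the only point requiring a moment's care is correctly reading off the second coordinate of $\nu_{(j)}$ from its definition and matching it against the inequality defining $\Lambda_{d,n,j+1}^*$, after which the strict inequality between second coordinates does all the work. (One should also double-check the edge of the range of $j$: the hypothesis $j \leq \min(\lfloor d/2\rfloor, n) - 1$ guarantees $j+1 \leq \min(\lfloor d/2\rfloor, n)$, so $\Lambda_{d,n,j+1}^*$ is a legitimately indexed set in \eqref{Subindexes}, and the argument never requires $\nu$ itself to lie in $\partial\Lambda_{d,n,j}^*$.)
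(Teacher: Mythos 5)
Your proposal is correct and follows essentially the same route as the paper: show $\nu_2 < (\nu_{(j)})_2$ using the defining inequality of $\Lambda_{d,n,j+1}^*$, note $|\nu| = |\nu_{(j)}|$, and invoke Lemma \ref{distNudistEll}. The only difference is that you spell out more carefully that $|\nu'| = |\nu|$ and that the range of $j$ keeps things well-defined, which the paper leaves implicit.
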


\begin{proof}
	Notice that $\nu_2\leq \left\lfloor \frac{|\nu|}{2}\right\rfloor-(j+1)=\nu_{2}'-1.$ That is, $\nu\neq \nu'$. Furthermore, $|\nu|=|\nu'|$ and therefore Lemma \ref{distNudistEll} implies $\ell_\nu\neq \ell_{\nu'}$.
\end{proof}

\begin{defin}
	Let $0\leq j \leq \min\left(\lfloor \frac{d}{2}\rfloor, n\right)$. 
	\begin{enumerate}[(i)]
		\item Let $\mu\in \Lambda_{d,n,j}$ and $\nu\in\Lambda_{d,n,j}^*$. Define 
	\begin{align*}
		S_{\mu,\nu,j}&:=\begin{cases}
		\ell_\nu^{\mu_2}&\textrm{ if }j=0,\\
		1 &\textrm{ if }\mu_2=j,\\
		\sum_{t_0=j}^{\mu_2-1}\sum_{t_1=j-1}^{t_0-1}\dotsb\sum_{t_{j}=0}^{t_{j-1}-1} \ell_{\nu_{(0)}}^{\mu_2-1-t_0}\ell_{\nu_{(1)}}^{t_0-1-t_1}\dotsb \ell_{\nu_{(j-1)}}^{t_{j-1}-1-t_j}\ell_{\nu}^{t_j}&\textrm{ otherwise.}
		\end{cases}
	\end{align*}
	\item Define the square matrix
		\begin{align*}
			N_j&:=[2^{\mu_2-j}(\nu_1+\nu_2)^{\mu_1-\mu_2}S_{\mu,\nu,j}]_{\substack{\mu\in \Lambda_{d,n,j}\\
			\nu\in\Lambda_{d,n,j}^*}}.
		\end{align*}
	\end{enumerate}
\end{defin}

\begin{lemma}\label{StoJ1Lemma}
	Let $0\leq j\leq \min\left(\lfloor\frac{d}{2}\rfloor, n\right)-1$, $\nu\in \Lambda_{d,n,j}$, and $\nu\in \Lambda_{d,n,j}^*$. Then 
	\begin{align*}
		S_{\mu,\nu,j}-S_{\mu,\nu_{(j)},j}=(\ell_\nu-\ell_{\nu_{(j)}})S_{\mu,\nu,j+1}.
	\end{align*}
\end{lemma}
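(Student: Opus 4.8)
The plan is to prove the identity by unwinding the definition of $S_{\mu,\nu,j}$ in the ``otherwise'' case and exhibiting the difference $S_{\mu,\nu,j}-S_{\mu,\nu_{(j)},j}$ as a telescoping-type factorization. First I would dispose of the degenerate cases. If $j=0$, then $S_{\mu,\nu,0}=\ell_\nu^{\mu_2}$ and $\nu_{(0)}$ has $\ell_{\nu_{(0)}}$ equal to the value of $\ell$ on the ``top'' element of its $|\nu|$-block; writing $a=\ell_\nu$, $b=\ell_{\nu_{(0)}}$, the claim reduces to the elementary identity $a^{\mu_2}-b^{\mu_2}=(a-b)\sum_{t=0}^{\mu_2-1}a^t b^{\mu_2-1-t}$, and one checks that $S_{\mu,\nu,1}$ is exactly $\sum_{t_0=0}^{\mu_2-1}\ell_{\nu_{(0)}}^{\mu_2-1-t_0}\ell_\nu^{t_0}$ (using Corollary \ref{NuSubjCorollary} to identify $\nu_{(0)}$ for $\nu$ with that for $\nu_{(0)}$). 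The case $\mu_2=j$ is also immediate since then both $S_{\mu,\nu,j}$ and $S_{\mu,\nu_{(j)},j}$ equal $1$ (the index $\nu_{(j)}$ has second coordinate $\lfloor|\nu|/2\rfloor-j$, and $\mu_2=j$ forces the $=1$ branch for both), while the empty sum $S_{\mu,\nu,j+1}$ with upper limit $\mu_2-1=j-1<j$ vanishes, so the identity reads $0=0$.

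For the main case $\mu_2>j\geq 1$, I would compare the two nested sums term by term after isolating the outermost summation variable $t_0$. Both $S_{\mu,\nu,j}$ and $S_{\mu,\nu_{(j)},j}$ are sums over the same index set $j\leq t_0\leq \mu_2-1$, $\ldots$, $0\leq t_j\leq t_{j-1}-1$, and by Corollary \ref{NuSubjCorollary} the quantities $\nu_{(0)},\nu_{(1)},\dots,\nu_{(j-1)}$ appearing in the summand are literally the same for $\nu$ and for $\nu'=\nu_{(j)}$; the only difference between the two summands is the final factor, which is $\ell_\nu^{t_j}$ in one and $\ell_{\nu_{(j)}}^{t_j}=\ell_{\nu'_{(j)}}^{t_j}$ in the other (again by Corollary \ref{NuSubjCorollary}, $\nu'_{(j)}=\nu_{(j)}$). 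So the difference of the two inner-sum summands is $\big(\prod_{r=0}^{j-1}\ell_{\nu_{(r)}}^{t_{r-1}-1-t_r}\big)\cdot(\ell_\nu^{t_j}-\ell_{\nu_{(j)}}^{t_j})$ with the convention $t_{-1}=\mu_2$, and the scalar prefactor is independent of which of $\nu,\nu'$ we took. Now apply the same elementary identity $a^{t}-b^{t}=(a-b)\sum_{s=0}^{t-1}a^s b^{t-1-s}$ with $a=\ell_\nu$, $b=\ell_{\nu_{(j)}}$, $t=t_j$: this introduces one more summation variable $t_{j+1}$ running from $0$ to $t_j-1$ and replaces $\ell_\nu^{t_j}-\ell_{\nu_{(j)}}^{t_j}$ by $(\ell_\nu-\ell_{\nu_{(j)}})\sum_{t_{j+1}=0}^{t_j-1}\ell_{\nu_{(j)}}^{t_j-1-t_{j+1}}\ell_\nu^{t_{j+1}}$.

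Pulling the constant $(\ell_\nu-\ell_{\nu_{(j)}})$ out front, what remains inside is a $(j+2)$-fold nested sum over $t_0,\dots,t_{j+1}$ whose summand is
$\ell_{\nu_{(0)}}^{\mu_2-1-t_0}\ell_{\nu_{(1)}}^{t_0-1-t_1}\cdots \ell_{\nu_{(j-1)}}^{t_{j-2}-1-t_{j-1}}\ell_{\nu_{(j)}}^{t_{j-1}-1-t_j}\ell_{\nu_{(j)}}^{t_j-1-t_{j+1}}\ell_\nu^{t_{j+1}}$, wait — I must be careful here: in $S_{\mu,\nu,j+1}$ the roles shift, and the exponent bookkeeping is exactly that defining $S_{\mu,\nu,j+1}$ once one checks $\nu_{(j)}$ plays the role of ``$\nu_{(j)}$'' in that expression too (Corollary \ref{NuSubjCorollary}). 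Matching the nested-sum expression against the definition of $S_{\mu,\nu,j+1}$ — same upper limits $t_0\leq\mu_2-1$, $t_1\leq t_0-1$, down to $t_{j+1}\leq t_j-1$, same lower limits $j\leq t_0$, $j-1\leq t_1$, $\dots$, $0\leq t_{j+1}$, and matching exponents — is a purely bookkeeping verification, and it yields precisely $S_{\mu,\nu,j+1}$. I expect this index-matching step to be the main obstacle, not because it is deep but because one must keep the shifting limits and the re-indexed $\ell_{\nu_{(r)}}$'s consistent; the cleanest route is to set up the general summand once, with the convention $t_{-1}:=\mu_2$ and $\nu_{(j+1)}$-free tail, and observe both $S_{\mu,\nu,j}$ (after the algebraic expansion) and $S_{\mu,\nu,j+1}$ (by definition) produce the identical symbolic sum, completing the proof.
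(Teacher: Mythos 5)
Your proposal follows essentially the same route as the paper: by Corollary \ref{NuSubjCorollary} the summands of $S_{\mu,\nu,j}$ and $S_{\mu,\nu_{(j)},j}$ agree except for the last factor $\ell_\nu^{t_j}$ versus $\ell_{\nu_{(j)}}^{t_j}$, so one subtracts, factors out $\bigl(\ell_\nu^{t_j}-\ell_{\nu_{(j)}}^{t_j}\bigr)$, expands it via $a^t-b^t=(a-b)\sum_{s=0}^{t-1}b^{t-1-s}a^{s}$ to introduce $t_{j+1}$, and identifies the result with $(\ell_\nu-\ell_{\nu_{(j)}})S_{\mu,\nu,j+1}$; you additionally spell out the degenerate cases $j=0$ and $\mu_2=j$, which the paper leaves implicit.

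One small inaccuracy worth fixing: in the final index-matching step you assert the lower limits of your $(j+2)$-fold sum are ``the same'' as those of $S_{\mu,\nu,j+1}$, namely $j\leq t_0,\,j-1\leq t_1,\dots,0\leq t_{j+1}$. But substituting $j+1$ for $j$ in the definition of $S_{\mu,\nu,\,\cdot}$ gives lower limits $j+1\leq t_0,\,j\leq t_1,\dots,0\leq t_{j+1}$, shifted up by one. The two sums nevertheless coincide: in your looser range, if some $t_i$ sits at its old minimum $j-i$ then the nested inequalities collapse to force $t_j=0$, making the inner sum over $t_{j+1}\in[0,t_j-1]$ empty, so these extra terms all vanish. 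The paper avoids the issue by tightening the lower limits to $j+1,j,\dots,1$ already in its first display (which is legitimate since the $t_j=0$ term contributes $\ell_\nu^0-\ell_{\nu_{(j)}}^0=0$). You should either do the same, or explicitly note that the extra boundary terms vanish; as written the claim that the lower limits ``match'' is not literally true.
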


\begin{proof}
	Using Corollary \ref{NuSubjCorollary} we have
		\begin{align*}
			S_{\mu,\nu,j}-S_{\mu,\nu_{(j)},j}&=\sum_{t_0=j+1}^{\mu_2-1}\sum_{t_1=j}^{t_0-1}\dotsb\sum_{t_{j}=1}^{t_{j-1}-1} \ell_{\nu_{(0)}}^{\mu_2-1-t_0}\ell_{\nu_{(1)}}^{t_0-1-t_1}\dotsb \ell_{\nu_{(j-1)}}^{t_{j-1}-1-t_j}(\ell_{\nu}^{t_j}-\ell_{\nu_{(j)}}^{t_j}).
		\end{align*}
	Now apply the identity 
		\begin{align*}
			(\ell_{\nu}^{t_j}-\ell_{\nu_{(j)}}^{t_j})=(\ell_{\nu}-\ell_{\nu_{(j)}})\left( \sum_{t_{j+1}=0}^{t_j-1}\ell_{\nu_{(j)}}^{t_j-1-t_{j+1}}\ell_{\nu}^{t_{j+1}} \right)
		\end{align*}
	to obtain
		\begin{align*}
		S_{\mu,\nu,j}-S_{\mu,\nu_{(j)},j}&=(\ell_{\nu}-\ell_{\nu_{(j)}})\sum_{t_0=j+1}^{\mu_2-1}\sum_{t_1=j}^{t_0-1}\dotsb\sum_{t_{j}=1}^{t_{j-1}-1}\sum_{t_{j+1}=0}^{t_j-1} \ell_{\nu_{(0)}}^{\mu_2-1-t_0}\ell_{\nu_{(1)}}^{t_0-1-t_1}\dotsb \ell_{\nu_{(j-1)}}^{t_{j-1}-1-t_j}\ell_{\nu_{(j)}}^{t_j-1-t_{j+1}}\ell_{\nu}^{t_{j+1}}\\
		&=(\ell_\nu-\ell_{\nu_{(j)}})S_{\mu,\nu,j+1}.\qedhere
		\end{align*}
\end{proof}

For the next lemma, recall the Vandermonde matrix defined for $\alpha_1,\dotsc,\alpha_s\in \C$ by
	\begin{align*}
		V(\alpha_1,\dotsc,\alpha_s)&:=[\alpha_j^{i-1}]_{1\leq i,j\leq s},
	\end{align*}
where $0^0:=1$. Indeed, $\det(V(\alpha_1,\dotsc,\alpha_s))=\prod_{1\leq j<i\leq s}(\alpha_i-\alpha_j)$.

\begin{lemma}\label{Njlemma}
	For $0\leq j\leq \min\left(\lfloor \frac{d}{2}\rfloor, n\right)-1$,
		\begin{align*}
			\det(N_j)&=\det(V(2j,2j+1,\dotsc, d))\det(N_{j+1})\left( \prod_{\nu\in \Lambda_{d,n,j+1}^*} 2\left( \ell_\nu-\ell_{\nu_{(j)}} \right) \right).
		\end{align*}

\end{lemma}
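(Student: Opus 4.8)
The plan is to extract the recursion directly from the disjoint decompositions \eqref{LamdnjDecomp}, which partition the row set $\Lambda_{d,n,j}$ and the column set $\Lambda_{d,n,j}^*$ of $N_j$ into a ``boundary'' part, $\partial\Lambda_{d,n,j}$ resp.\ $\partial\Lambda_{d,n,j}^*$, and an ``interior'' part, $\Lambda_{d,n,j+1}$ resp.\ $\Lambda_{d,n,j+1}^*$. First I would check that, under the orderings of Definition \ref{Orderings}, every boundary index precedes every interior index in both the rows and the columns: for rows this is immediate, since a boundary $\mu$ is exactly one with $\mu_2=j$, the least value of $\mu_2$ on $\Lambda_{d,n,j}$; for columns, a short computation from the definitions shows that $\nu\in\Lambda_{d,n,j}^*$ satisfies $\lfloor(\nu_1-\nu_2)/2\rfloor\ge j$, with equality precisely when $\nu\in\partial\Lambda_{d,n,j}^*$, and $\lfloor(\nu_1-\nu_2)/2\rfloor\ge j+1$ when $\nu\in\Lambda_{d,n,j+1}^*$, so the boundary columns indeed come first. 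In the resulting $2\times 2$ block form (both diagonal blocks are square by the cardinality identities established above), the top-left block has $(\mu,\nu)$-entry $2^{0}(\nu_1+\nu_2)^{\mu_1-j}S_{\mu,\nu,j}=|\nu|^{\mu_1-j}$, using $S_{\mu,\nu,j}=1$ for $\mu_2=j$ and $\nu_1+\nu_2=|\nu|$, where $\mu_1-j$ runs through $0,1,\dots,d-2j$ and $|\nu|$ runs through $2j,2j+1,\dots,d$; that is, it is exactly $V(2j,2j+1,\dots,d)$.

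Next I would annihilate the top-right (boundary $\times$ interior) block by elementary column operations. For each interior column $\nu\in\Lambda_{d,n,j+1}^*$ the index $\nu_{(j)}$ lies in $\partial\Lambda_{d,n,j}^*$, so column $\nu_{(j)}$ is a boundary column with $|\nu_{(j)}|=|\nu|$; subtract column $\nu_{(j)}$ from column $\nu$. Since the boundary columns come first and are themselves never modified, these operations preserve $\det(N_j)$. Because $|\nu_{(j)}|=|\nu|$, the factor $(\nu_1+\nu_2)^{\mu_1-\mu_2}=|\nu|^{\mu_1-\mu_2}$ is common to columns $\nu$ and $\nu_{(j)}$: for a boundary row ($\mu_2=j$) both entries equal $|\nu|^{\mu_1-j}$ and cancel, so the top-right block becomes zero; for an interior row ($\mu\in\Lambda_{d,n,j+1}$) the new entry in column $\nu$ equals $2^{\mu_2-j}|\nu|^{\mu_1-\mu_2}\bigl(S_{\mu,\nu,j}-S_{\mu,\nu_{(j)},j}\bigr)$, and Lemma \ref{StoJ1Lemma} rewrites the difference as $(\ell_\nu-\ell_{\nu_{(j)}})S_{\mu,\nu,j+1}$. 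Pulling out one factor of $2$, this is precisely $2(\ell_\nu-\ell_{\nu_{(j)}})$ times $2^{\mu_2-(j+1)}|\nu|^{\mu_1-\mu_2}S_{\mu,\nu,j+1}$, the $(\mu,\nu)$-entry of $N_{j+1}$ (the interior rows and columns of $N_j$ carry the same order they have in $N_{j+1}$).

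Thus the column operations turn $N_j$ into a block lower triangular matrix whose top-left block is $V(2j,2j+1,\dots,d)$ and whose bottom-right block is obtained from $N_{j+1}$ by rescaling each column $\nu$ by $2(\ell_\nu-\ell_{\nu_{(j)}})$. Taking the determinant of a block triangular matrix and then pulling these column scalars out by multilinearity yields
\[
\det(N_j)=\det\bigl(V(2j,2j+1,\dots,d)\bigr)\det(N_{j+1})\prod_{\nu\in\Lambda_{d,n,j+1}^*}2(\ell_\nu-\ell_{\nu_{(j)}}),
\]
which is the assertion. The only nontrivial input is Lemma \ref{StoJ1Lemma}; everything else is bookkeeping, and that is exactly where care is needed: one must confirm that the two orderings honestly separate boundary from interior (so the triangular shape is genuine, not triangular only up to a permutation), and track the powers of $2$ together with the factors $(\nu_1+\nu_2)^{\mu_1-\mu_2}$ through the subtraction, so that what remains is $N_{j+1}$ with rescaled columns rather than $N_{j+1}$ up to an overall constant.
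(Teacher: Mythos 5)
Your proof is correct and follows essentially the same route as the paper: identify the Vandermonde block on $\partial\Lambda_{d,n,j}\times\partial\Lambda_{d,n,j}^*$, subtract column $\nu_{(j)}$ from column $\nu$ for each interior $\nu$, apply Lemma \ref{StoJ1Lemma}, and pull out the column scalars $2(\ell_\nu-\ell_{\nu_{(j)}})$. Your explicit check that the orderings of Definition \ref{Orderings} place boundary indices before interior ones (so the block-triangular shape is literal, not merely up to permutation) is a detail the paper leaves implicit; otherwise the arguments coincide.
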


\begin{proof}
	Fix $0\leq j\leq \min\left( \lfloor \frac{d}{2}\rfloor, n  \right)-1$. Observe that $N_j$ has a submatrix 	
	\[
		[(\nu_1+\nu_2)^{\mu_1-j}]_{\substack{\mu \in \partial\Lambda_{d,n,j}\\ \nu\in \partial\Lambda_{d,n,j}^*}}=V(2j,2j+1,\dotsc, d).
	\]
	Therefore for each $\nu\in \Lambda_{d,n,j+1}^*$ we have $\nu_{(j)}\in \partial\Lambda_{d,n,j}^*$ and when we subtract the column indexed by $\nu_{(j)}$ from the column indexed by $\nu$, the entries at the $(\mu,\nu)$ position, where $\mu \in \partial\Lambda_{d,n,j}$, will vanish. The matrix obtained from $N_j$ after the above column operations is block lower triangular of the form $\begin{bmatrix} *&0\\ *&*\end{bmatrix}$. The determinant of the latter matrix is equal to $\det(N_j)$ and hence
		\begin{align*}
			\det(N_j)&=\det(V(2j,2j+1,\dotsc, d))\det\left( [2^{\mu_2-j}(\nu_1+\nu_2)^{\mu_1-\mu_2}(S_{\mu,\nu,j}-S_{\mu,\nu_{(j)},j})]_{\substack{\mu\in \Lambda_{d,n,j+1}\\ \nu\in \Lambda_{d,n,j+1}^*}} \right).
		\end{align*}
	Applying Lemma \ref{StoJ1Lemma} to the columns of the second factor on the right hand side 
	 
		\begin{align*}
			\det(N_j)&=\det(V(2j,2j+1,\dotsc, d))\det\left( [2^{\mu_2-j}(\nu_1+\nu_2)^{\mu_1-\mu_2}(\ell_\nu-\ell_{\nu_{(j)}})S_{\mu,\nu,j+1}]_{\substack{\mu\in \Lambda_{d,n,j+1}\\ \nu\in \Lambda_{d,n,j+1}^*}} \right)\\
			&=\det(V(2j,2j+1,\dotsc, d))\det(N_{j+1})\left( \prod_{\nu\in \Lambda_{d,n,j+1}^*} 2\left( \ell_\nu-\ell_{\nu_{(j)}} \right) \right).\qedhere
		\end{align*}
\end{proof}

\begin{theorem}\label{MdScalarThm}
	Let $M_d'$ be as defined in Equation \eqref{MdScalarMatrix}. Then $\det(M_d')=ab$ where
		\begin{align*}
			a= \prod_{j=1}^{\min\left(\lfloor\frac{d}{2}\rfloor,n\right)-1}\det(V(2j,2j+1,\dotsc, d))\left( \prod_{\nu\in \Lambda_{d,n,j+1}^*} 2\left( \ell_\nu-\ell_{\nu_{(j)}} \right) \right) 
		\end{align*}
	and
		\begin{align*}
			b=\det\left(V\left(2\min\left(\left\lfloor\frac{d}{2}\right\rfloor,n\right),2\min\left(\left\lfloor\frac{d}{2}\right\rfloor,n\right)+1,\dotsc,d\right)\right).
		\end{align*}
\end{theorem}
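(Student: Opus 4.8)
The plan is to prove Theorem \ref{MdScalarThm} by iterating Lemma \ref{Njlemma}. The key observation is that $M_d' = N_0$: indeed, when $j=0$ we have $S_{\mu,\nu,0} = \ell_\nu^{\mu_2}$ and $2^{\mu_2-0} = 2^{\mu_2}$, so the $(\mu,\nu)$ entry of $N_0$ is $2^{\mu_2}(\nu_1+\nu_2)^{\mu_1-\mu_2}\ell_\nu^{\mu_2} = (2\ell_\nu)^{\mu_2}(\nu_1+\nu_2)^{\mu_1-\mu_2}$, which matches \eqref{MdScalarMatrix}. Similarly, at the top of the range $j = m := \min(\lfloor d/2\rfloor, n)$, the index sets $\Lambda_{d,n,m}$ and $\Lambda_{d,n,m}^*$ consist only of the boundary parts $\partial\Lambda_{d,n,m}$ and $\partial\Lambda_{d,n,m}^*$ (there is nothing with $\mu_2 > m$ since $\mu_2 \le n$ and $\mu_2 \le \lfloor |\mu|/2 \rfloor \le \lfloor d/2 \rfloor$), and on this set $S_{\mu,\nu,m} = 1$ because $\mu_2 = m = j$. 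Hence $N_m = [2^0(\nu_1+\nu_2)^{\mu_1-m}]_{\mu,\nu} = [(\nu_1+\nu_2)^{\mu_1-m}]$, which upon recognizing $\nu_1 + \nu_2 = k$ ranging over $2m, 2m+1, \dots, d$ and $\mu_1 - m$ ranging over $0, 1, \dots, d-2m$ is exactly the Vandermonde matrix $V(2m, 2m+1, \dots, d)$. So $\det(N_m) = b$.

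From here the argument is a telescoping product. Applying Lemma \ref{Njlemma} successively for $j = 0, 1, \dots, m-1$ gives
\begin{align*}
	\det(M_d') = \det(N_0) = \prod_{j=0}^{m-1} \det(V(2j, 2j+1, \dots, d))\left( \prod_{\nu\in \Lambda_{d,n,j+1}^*} 2(\ell_\nu - \ell_{\nu_{(j)}}) \right) \cdot \det(N_m).
\end{align*}
Substituting $\det(N_m) = b$ and splitting off the $j = 0$ factor (for which $\Lambda_{d,n,1}^* = \Lambda_{d,n}^* \setminus \partial\Lambda_{d,n,0}^*$ and the product over $\nu$ together with $\det V(0,1,\dots,d)$ should be absorbed appropriately) leaves precisely the claimed expression $a \cdot b$, where $a$ collects the factors for $j = 1, \dots, m-1$. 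I would need to double-check the exact indexing convention so that the $j=0$ term is correctly accounted for — the statement's product for $a$ starts at $j=1$, so either the $j=0$ contribution is trivial in the relevant normalization or it is being folded into $b$; a careful reading of \eqref{MdScalarMatrix} versus $N_0$ will settle which, and I would state this reconciliation explicitly.

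The one genuine prerequisite that must be verified before the telescoping is legitimate is that $\det(N_{j+1}) \ne 0$ at each stage, since Lemma \ref{Njlemma} is an identity regardless, but to conclude $\det(M_d') \ne 0$ (which is the point of the section, as noted after Remark \ref{DegreeRmrk}) one wants all factors nonzero. The Vandermonde factors are nonzero because $2j, 2j+1, \dots, d$ are distinct. The factors $2(\ell_\nu - \ell_{\nu_{(j)}})$ are nonzero by Corollary \ref{EllvNotEllvj}, which is exactly why that corollary was proved. So nonvanishing of $\det(M_d')$ follows from the product formula together with these two input facts.

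The main obstacle I anticipate is bookkeeping rather than conceptual: getting the index ranges in the telescoping product exactly right, particularly the boundary cases $j = 0$ and $j = m$, and confirming that $N_0$ really equals $M_d'$ and that $N_m$ really equals the stated Vandermonde matrix $V(2m, 2m+1, \dots, d)$. One should also verify that $m - 1 \ge 1$ is not implicitly assumed — if $\min(\lfloor d/2\rfloor, n) \le 1$ the product defining $a$ is empty and one must check the formula degenerates correctly (e.g. $\det(M_d') = b = \det(V(2m, \dots, d))$ directly). Once the identification of the endpoints $N_0$ and $N_m$ is pinned down, the proof is just "apply Lemma \ref{Njlemma} repeatedly and multiply," so the whole argument is short.
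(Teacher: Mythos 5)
Your approach matches the paper's own two-line argument exactly: identify $N_0 = M_d'$, identify $N_m$ (with $m := \min(\lfloor d/2\rfloor, n)$) as the Vandermonde matrix $V(2m,2m+1,\dots,d)$, and telescope Lemma \ref{Njlemma}. Both endpoint identifications in your write-up are correct, and your remarks about why the factors are nonzero (distinct Vandermonde nodes; Corollary \ref{EllvNotEllvj}) are precisely the inputs needed for the nonvanishing of $\det(M_d')$.

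The indexing concern you flagged is genuine, and neither of the two escape hatches you suggest (the $j=0$ factor being trivial, or being folded into $b$) actually applies. Telescoping Lemma \ref{Njlemma} from $j=0$ through $j=m-1$ gives
\begin{align*}
\det(M_d') = \det(N_0) = \left(\prod_{j=0}^{m-1}\det(V(2j,\dots,d))\prod_{\nu\in\Lambda_{d,n,j+1}^*} 2\left(\ell_\nu - \ell_{\nu_{(j)}}\right)\right)\cdot \det(N_m),
\end{align*}
and the $j=0$ contribution is nontrivial. For instance with $d=2$, $n=1$ one computes $\det(M_2')=4$ directly, whereas the printed formula gives $a\cdot b = 1\cdot 1 = 1$ ($a$ being an empty product and $b$ a $1\times 1$ Vandermonde), and the missing $j=0$ factor $\det(V(0,1,2))\cdot 2(\ell_{(2,0)}-\ell_{(1,1)}) = 2\cdot 2 = 4$ accounts for the discrepancy. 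So the product defining $a$ should start at $j=0$, not $j=1$, and correspondingly the phrase in the paper's proof about applying Lemma \ref{Njlemma} $m-1$ times should read $m$ times. Your derived formula with $\prod_{j=0}^{m-1}$ is the correct one.
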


\begin{proof}
	This follows from the fact that $N_0=M_d'$, applying Lemma \ref{Njlemma} $\min(\lfloor\frac{d}{2}\rfloor,n)-1$ times and observing that
		\begin{align*}
			\det\left(N_{\min\left(\lfloor\frac{d}{2}\rfloor,n\right)}\right) &=\det\left(V\left(2\min\left(\left\lfloor\frac{d}{2}\right\rfloor,n\right),2\min\left(\left\lfloor\frac{d}{2}\right\rfloor,n\right)+1,\dotsc,d\right)\right).\qedhere
		\end{align*}
\end{proof}

%%%%%%%%%%%%%%%%%%%%%%%%%%%%%%%%%%%%
%
%
\section{Factorization of $\det(M_d)$}
%
%
%%%%%%%%%%%%%%%%%%%%%%%%%%%%%%%%%%%%

Theorem \ref{MdScalarThm} implies that $\det(M_d')\neq 0$.  Hence by Lemma \ref{LeadingIsMatrx} there exists a monic polynomial $p(x)\in \Q[x]$ such that
	\begin{align*}
		\det(M_d)&=\det(M_d')p(x).
	\end{align*}
This section is dedicated to factoring $p(x)$ into linear terms. The final results are given in Theorem \ref{MainTheorem}.

\begin{lemma}\label{FactorLambda}
	Let $\mu\in \Lambda_{d,n}$ and let $\nu,\nu'\in \Lambda_{d,n}^*$, such that $\nu\neq \nu'$ and $|\nu|=|\nu'|$. Then the polynomial $\lambda_{\mu,\nu}-\lambda_{\mu,\nu'}$	is divisible by $\left( x-\frac{\ell_{\nu}+\ell_{\nu'}-1}{2} \right)$, where $\lambda_{\mu,\nu}$ is defined in Equation \eqref{LambdaCoeff}.
\end{lemma}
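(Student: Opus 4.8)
The plan is to look at the polynomial $\lambda_{\mu,\nu}(x)-\lambda_{\mu,\nu'}(x)$ directly from its defining formula \eqref{LambdaCoeff} and show that $x=\tfrac{\ell_\nu+\ell_{\nu'}-1}{2}$ is a root. Recall that
\[
\lambda_{\mu,\nu}(x)=\bigl((2x+1)\ell_\nu-\ell_\nu^2\bigr)^{\mu_2}(\nu_1+\nu_2)^{\mu_1-\mu_2}.
\]
Since $|\nu|=|\nu'|$, the factor $(\nu_1+\nu_2)^{\mu_1-\mu_2}=(\nu_1'+\nu_2')^{\mu_1-\mu_2}=|\nu|^{\mu_1-\mu_2}$ is common to both terms. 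Hence it suffices to show that the scalar difference $\bigl((2x+1)\ell_\nu-\ell_\nu^2\bigr)^{\mu_2}-\bigl((2x+1)\ell_{\nu'}-\ell_{\nu'}^2\bigr)^{\mu_2}$ vanishes at the claimed value of $x$ (when $\mu_1=\mu_2$ the common factor is $1$ and the same argument applies, while if $\mu_2=0$ the difference is identically zero and there is nothing to prove).

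First I would evaluate the two ``eigenvalue'' expressions $g_\nu(x):=(2x+1)\ell_\nu-\ell_\nu^2$ and $g_{\nu'}(x):=(2x+1)\ell_{\nu'}-\ell_{\nu'}^2$ at $x_0:=\tfrac{\ell_\nu+\ell_{\nu'}-1}{2}$. Then $2x_0+1=\ell_\nu+\ell_{\nu'}$, so
\[
g_\nu(x_0)=(\ell_\nu+\ell_{\nu'})\ell_\nu-\ell_\nu^2=\ell_\nu\ell_{\nu'},
\qquad
g_{\nu'}(x_0)=(\ell_\nu+\ell_{\nu'})\ell_{\nu'}-\ell_{\nu'}^2=\ell_\nu\ell_{\nu'}.
\]
Thus $g_\nu(x_0)=g_{\nu'}(x_0)$, and therefore $g_\nu(x_0)^{\mu_2}=g_{\nu'}(x_0)^{\mu_2}$, which gives $\lambda_{\mu,\nu}(x_0)-\lambda_{\mu,\nu'}(x_0)=0$. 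Since $\lambda_{\mu,\nu}-\lambda_{\mu,\nu'}\in\Z[x]$ and $x_0$ is a root, the linear polynomial $x-x_0$ divides it over $\Q[x]$ (and, after clearing the factor of $2$ in $x_0$, over $\Z[x]$ as well, though divisibility over $\Q[x]$ is all that is needed downstream). This is the required divisibility.

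There is essentially no obstacle here: the only thing to watch is the degenerate cases $\mu_2=0$ and $\mu_1=\mu_2$, which are handled by the remarks above, and the (harmless) observation that $x_0$ may be a half-integer, so the cleanest statement is divisibility by $\bigl(x-\tfrac{\ell_\nu+\ell_{\nu'}-1}{2}\bigr)$ in $\Q[x]$ rather than $\Z[x]$. The conceptual content is simply that the two quadratics $g_\nu$ and $g_{\nu'}$, both of the form $(2x+1)c-c^2$ with distinct constants $c=\ell_\nu\neq\ell_{\nu'}$ (distinctness coming from Lemma \ref{distNudistEll}), necessarily agree at exactly one point, namely where $2x+1$ equals the sum of the two constants; raising to the common power $\mu_2$ and multiplying by the common factor $|\nu|^{\mu_1-\mu_2}$ preserves this coincidence.
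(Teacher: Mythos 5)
Your proof is correct and rests on exactly the same algebraic fact as the paper's: at $x_0=\tfrac{\ell_\nu+\ell_{\nu'}-1}{2}$ we have $2x_0+1=\ell_\nu+\ell_{\nu'}$, so $(2x_0+1)\ell_\nu-\ell_\nu^2=(2x_0+1)\ell_{\nu'}-\ell_{\nu'}^2=\ell_\nu\ell_{\nu'}$, whence the difference vanishes. The only cosmetic difference is that you deduce divisibility by exhibiting $x_0$ as a root and invoking the factor theorem over $\Q[x]$, whereas the paper writes out the explicit factorization via $a^{m}-b^{m}=(a-b)\sum_{t}a^{m-1-t}b^{t}$ together with $a-b=2(\ell_\nu-\ell_{\nu'})\bigl(x-x_0\bigr)$; the two presentations are equivalent in content. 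One small quibble: your parenthetical about divisibility ``over $\Z[x]$ as well, after clearing the factor of $2$'' is a bit loosely phrased, since $x-x_0\notin\Z[x]$ when $\ell_\nu+\ell_{\nu'}$ is even; what is true (and what the paper's factorization makes visible) is that $2x-(\ell_\nu+\ell_{\nu'}-1)$ divides the difference in $\Z[x]$. As you note, divisibility in $\Q[x]$ is all that is used later, so this does not affect the correctness of your argument.
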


\begin{proof}
	Let $\mu\in \Lambda_{d,n}$ and $\nu,\nu'\in \Lambda_{d,n}^*$, such that $|\nu|=|\nu'|$. Then $\nu_1+\nu_2=\nu_1'+\nu_2'$ and
		\begin{align*}
			\lambda_{\mu,\nu}-\lambda_{\mu,\nu'}&=(\nu_1+\nu_2)^{\mu_1-\mu_2}\left(((2x+1)\ell_\nu-\ell_{\nu}^2)^{\mu_2}-((2x+1)\ell_{\nu'}-\ell_{\nu'}^2)^{\mu_2}  \right)\\
			&=(\nu_1+\nu_2)^{\mu_1-\mu_2}(2(\ell_\nu-\ell_{\nu'}))\left( x-\frac{\ell_{\nu}+\ell_{\nu'}-1}{2} \right)\\
			&\hspace{1in}\left( \sum_{t=0}^{\mu_2-1}((2x+1)\ell_\nu-\ell_{\nu}^2)^{\mu_2-1-t}((2x+1)\ell_{\nu'}-\ell_{\nu'}^2)^{t} \right).\qedhere
		\end{align*}
\end{proof}

The next lemma will justify that Definition \ref{Matrixds} is valid.

\begin{lemma}\label{EllsAdded}
	Let $\nu,\nu'\in \Lambda_{d,n}^*$, such that $\nu\neq \nu'$ and $|\nu|=|\nu'|$. Then $$\ell_\nu+\ell_{\nu'}-1\in \lbrace 0,1,\dotsc, 2n-2 \rbrace.$$
\end{lemma}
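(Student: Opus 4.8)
The plan is to recall that for any $\eta\in\Lambda_{d,n}^*$, the definition \eqref{EllCoeff} gives $\ell_\eta=\min(\eta_1-\eta_2,\,2n+1-(\eta_1-\eta_2))$, so $\ell_\eta$ depends only on the quantity $m:=\eta_1-\eta_2$. Since $\eta_2\ge 0$ we have $0\le m\le|\eta|$, and since $\eta\in\Lambda_{d,n}^*$ we have $\eta_2\ge\lfloor|\eta|/2\rfloor-n$, which forces $m=\eta_1-\eta_2\le|\eta|-2\lfloor|\eta|/2\rfloor+2n\le 2n+1$ (using $|\eta|-2\lfloor|\eta|/2\rfloor\le 1$). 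Thus $0\le m\le 2n+1$, and on this range $\ell_\eta=\min(m,2n+1-m)$ takes values in $\{0,1,\dots,n\}$; moreover $\ell_\eta=0$ occurs only when $m=0$ or $m=2n+1$.

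Next I would use the hypotheses $\nu\ne\nu'$ and $|\nu|=|\nu'|$. Writing $m:=\nu_1-\nu_2$ and $m':=\nu_1'-\nu_2'$, the equality $|\nu|=|\nu'|$ together with $\nu\ne\nu'$ forces $m\ne m'$ (the pair $(\nu_1,\nu_2)$ is determined by $|\nu|$ and $m$). By Lemma \ref{distNudistEll} we then have $\ell_\nu\ne\ell_{\nu'}$. So among $\ell_\nu,\ell_{\nu'}\in\{0,1,\dots,n\}$ the two values are distinct; their sum lies in $\{0+1,\dots,(n-1)+n\}=\{1,\dots,2n-1\}$, hence $\ell_\nu+\ell_{\nu'}-1\in\{0,1,\dots,2n-2\}$, which is the claim.

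The one point needing a little care — and the main (mild) obstacle — is the upper bound: we must rule out $\ell_\nu+\ell_{\nu'}=2n$, i.e.\ $\{\ell_\nu,\ell_{\nu'}\}\ne\{n,n\}$, which is immediate from distinctness, but we should also confirm that $\ell_\nu,\ell_{\nu'}\le n$ genuinely holds under the constraint defining $\Lambda_{d,n}^*$, i.e.\ that the relevant $m$ cannot exceed $2n+1$. This is exactly the computation $m\le|\eta|-2\lfloor|\eta|/2\rfloor+2n$ above, and since $|\eta|-2\lfloor|\eta|/2\rfloor\in\{0,1\}$ we get $m\le 2n+1$, so $\ell_\eta=\min(m,2n+1-m)\le n$ as needed. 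With the bounds $0\le\ell_\nu,\ell_{\nu'}\le n$ and $\ell_\nu\ne\ell_{\nu'}$ in hand, the conclusion $\ell_\nu+\ell_{\nu'}-1\in\{0,1,\dots,2n-2\}$ follows, and this in turn makes the shift $x-\frac{\ell_\nu+\ell_{\nu'}-1}{2}$ of Lemma \ref{FactorLambda} a genuine candidate root in the expected range, justifying Definition \ref{Matrixds}.
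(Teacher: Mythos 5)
Your proof is correct and it is genuinely cleaner than the paper's. The paper's argument is a case analysis on $k=|\nu|$ (with three main cases $k\le n$, $n+1\le k\le 2n+1$, $k\ge 2n+2$, and further subcases in the middle range depending on which of two subsets $\ell_\nu$ and $\ell_{\nu'}$ fall into), enumerating the possible sets of values of $\ell_\nu+\ell_{\nu'}-1$ in each regime. You instead establish a single uniform bound: the membership condition $\nu_2\ge\lfloor|\nu|/2\rfloor-n$ forces $\nu_1-\nu_2\le 2n+1$, hence $\ell_\eta=\min(\eta_1-\eta_2,\,2n+1-(\eta_1-\eta_2))\in\{0,\dots,n\}$ for every $\eta\in\Lambda_{d,n}^*$; combined with distinctness from Lemma \ref{distNudistEll}, the two values sum to something in $\{1,\dots,2n-1\}$ and the claim follows. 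This is shorter, avoids splitting into cases that the final statement does not actually need, and makes the dependence on the defining constraint of $\Lambda_{d,n}^*$ transparent. The paper's case analysis does produce finer information (explicit sets of attainable values of $\ell_\nu+\ell_{\nu'}-1$ in each range of $k$), but that extra information is not used elsewhere, so nothing is lost by your streamlining.

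One small wording note: you say "Since $\eta_2\ge 0$ we have $0\le m\le|\eta|$"; the lower bound $m\ge 0$ actually comes from $\eta_1\ge\eta_2$ (the condition $\eta\in\mathbb{P}$), not from $\eta_2\ge 0$. This is cosmetic and does not affect correctness, since the subsequent inequalities you derive are the ones that matter.
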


\begin{proof}
	Let $0\leq k\leq d$ and $\nu,\nu'\in \Lambda_{d,n}^*$, such that $k=|\nu|=|\nu'|$. This proof is divided into three cases:
		\begin{itemize}
			\item If $k\leq n$, then $\ell_\nu,\ell_{\nu'}\in \lbrace k-2j\colon j=0,1,\dotsc,\lfloor\frac{k}{2}\rfloor \rbrace$. Lemma \ref{distNudistEll} implies that $\ell_\nu\neq \ell_{\nu'}$ and therefore 
				\begin{align*}
					\ell_\nu+\ell_{\nu'}-1\in \left\lbrace  2k-2j-1\colon j=1,2,\dotsc, 2\left\lfloor \frac{k}{2}\right\rfloor-1 \right\rbrace.
				\end{align*}
			\item If $n+1\leq k\leq 2n+1$, then $\ell_\nu$ and $\ell_{\nu'}$ take distinct values in the sets
			\begin{align}
				\left\lbrace k-2j\colon j=k-\left\lfloor \frac{n+k}{2}\right\rfloor,\dotsc,\left\lfloor\frac{k}{2}\right\rfloor \right\rbrace\label{LemmaCase2.1}\\
				\left\lbrace 2n+1-k+2j\colon j=0,\dotsc, k-\left\lfloor \frac{n+k}{2}\right\rfloor-1 \right\rbrace.\label{LemmaCase2.2}
			\end{align}
				\begin{itemize}
					\item If $\ell_\nu,\ell_{\nu'}$ are both in \eqref{LemmaCase2.1} then
						\begin{align*}
							\ell_\nu+\ell_{\nu'}-1\in \left\lbrace 2k-2j-1\colon j=2k-2\left\lfloor \frac{k}{2}\right\rfloor+1,\dotsc, 2\left\lfloor\frac{k}{2}\right\rfloor-1 \right\rbrace.
						\end{align*}
					\item For there to be atleast two distinct $\ell_\nu,\ell_{\nu'}$ in \eqref{LemmaCase2.2} then $k\geq n+3$. In which case we have
						\begin{align*}
							\ell_\nu+\ell_{\nu'}-1\in \left\lbrace 4n-2k+2j+1\colon j=1,2,\dotsc, 2k+2\left\lfloor \frac{n+1-k}{2}\right\rfloor-2n-3 \right\rbrace.
						\end{align*}
					\item If $\ell_\nu$ is in \eqref{LemmaCase2.1} and $\ell_{\nu'}$ in \eqref{LemmaCase2.2} then
						\begin{align*}
							\ell_\nu+\ell_{\nu'}-1\in \left\lbrace 2n-2j\colon j=1,2,\dotsc,\left\lfloor \frac{k}{2}\right\rfloor \right\rbrace.
						\end{align*}
				\end{itemize}
			\item If $k\geq 2n+2$ then $\ell_\nu$ and $\ell_{\nu'}$ take values from $\lbrace 0,1,\dotsc, n\rbrace$. It then follows for distinct $\ell_\nu$ and $\ell_{\nu'}$ that
				\begin{align*}
					\ell_\nu+\ell_{\nu'}&-1\in \lbrace 0,1,\dotsc, 2n-2\rbrace.\qedhere
				\end{align*}
		\end{itemize}
\end{proof}

For $s\in \lbrace 0,1,\dotsc, 2n-2\rbrace$ and $\nu\in \Lambda_{d,n}^*$ there is at most one $\nu'\in \Lambda_{d,n}^*$ such that $\ell_{\nu'}<\ell_{\nu}$, $|\nu|=|\nu'|$, and $\ell_\nu+\ell_{\nu'}-1=s$. Thus we can define the following matrix.

\begin{defin}\label{Matrixds}
	For $s=0,1,\dotsc, 2n-2$, define the square matrix $$M_{d,s}:=[\lambda_{\mu,\nu,s}]_{\substack{\mu\in\Lambda_{d,n}\\ \nu\in \Lambda_{d,n}^*}}$$ where $\lambda_{\mu,\nu,s}\in \Z[x]$ is defined by
		\begin{align*}
			\lambda_{\mu,\nu,s}(x)&:=\begin{cases}
				\lambda_{\mu,\nu}-\lambda_{\mu,\nu'} &\textrm{if there is}~ \nu'\in \Lambda_{d,n}^*,~ |\nu|=|\nu'|,~ \ell_{\nu'}<\ell_\nu,~ s=\ell_\nu+\ell_{\nu'}-1,\\
				\lambda_{\mu,\nu}&\textrm{otherwise.}
			\end{cases}
		\end{align*}
\end{defin}

\begin{lemma}\label{detMatrixds}
	Let $s\in \lbrace 0,1,\dotsc, 2n-2\rbrace$. Then $\det(M_d)=\det(M_{d,s}).$ Furthermore, $\det(M_d)$ is divisible, in $\Q[x]$, by $\left(x-\frac{s}{2}\right)^{f(d,s)},$
	where
	\begin{align}\label{gks}
		f(d,s)=\big|\lbrace (\nu,\nu')\in \Lambda_{d,n}^*\times \Lambda_{d,n}^*\colon |\nu|=|\nu'|, \ell_{\nu'}<\ell_{\nu}, \ell_\nu+\ell_{\nu'}-1=s \rbrace\big|
	\end{align}
\end{lemma}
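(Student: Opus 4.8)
The plan is to reduce the divisibility claim to a single column operation argument combined with Lemma \ref{FactorLambda}. First I would establish the easy equality $\det(M_d) = \det(M_{d,s})$. For a fixed $s$, the matrix $M_{d,s}$ is obtained from $M_d$ by replacing, for each column index $\nu$ that participates in a pair $(\nu,\nu')$ with $\ell_{\nu'} < \ell_\nu$ and $\ell_\nu + \ell_{\nu'} - 1 = s$, the column $(\lambda_{\mu,\nu})_\mu$ by $(\lambda_{\mu,\nu} - \lambda_{\mu,\nu'})_\mu$. By the remark preceding Definition \ref{Matrixds}, for each $\nu$ there is at most one such partner $\nu'$, and moreover $\nu'$ has strictly smaller $\ell$-value, hence (since $|\nu| = |\nu'|$ and by Lemma \ref{distNudistEll} the $\ell$-values determine $\nu$ among elements of fixed weight) the map $\nu \mapsto \nu'$ is injective and its image is disjoint from its domain. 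Therefore all these column replacements are of the form ``subtract a scalar multiple (here, $1$ times) of a column from a \emph{different} column,'' and they can be performed simultaneously since no column is both modified and used as a subtrahend. Such operations preserve the determinant, giving $\det(M_d) = \det(M_{d,s})$.

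Next I would extract the divisibility. By Lemma \ref{FactorLambda}, whenever $\nu$ has a partner $\nu'$ with $\ell_\nu + \ell_{\nu'} - 1 = s$, every entry $\lambda_{\mu,\nu,s} = \lambda_{\mu,\nu} - \lambda_{\mu,\nu'}$ in that column is divisible by $\left(x - \frac{\ell_\nu + \ell_{\nu'} - 1}{2}\right) = \left(x - \frac{s}{2}\right)$ in $\Q[x]$. Hence we may factor $\left(x - \frac{s}{2}\right)$ out of each such column. The number of columns of $M_{d,s}$ from which we can pull out this factor is exactly the number of indices $\nu \in \Lambda_{d,n}^*$ admitting such a partner $\nu'$, which is precisely $f(d,s)$ as defined in \eqref{gks} (the set in \eqref{gks} is in bijection with the set of such $\nu$, via $(\nu,\nu') \mapsto \nu$, using uniqueness of the partner). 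Pulling the factor out of each of these $f(d,s)$ columns shows $\left(x - \frac{s}{2}\right)^{f(d,s)}$ divides $\det(M_{d,s}) = \det(M_d)$ in $\Q[x]$.

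The only subtle point — and the step I would be most careful about — is verifying that the simultaneous column operations defining $M_{d,s}$ genuinely leave the determinant unchanged, i.e. that the ``modified columns'' and ``subtrahend columns'' form disjoint sets, so there is no circular dependency among the operations. This rests on the fact that in any pair contributing to $s$, the subtrahend $\nu'$ satisfies $\ell_{\nu'} < \ell_\nu$, so it is never itself one of the indices $\nu$ being modified (a modified index always pairs \emph{down} to a strictly smaller $\ell$-value, and Lemma \ref{distNudistEll} guarantees these $\ell$-values are all distinct within a fixed weight $|\nu|$, so the relation ``$\nu$ is modified using $\nu'$'' is a partial matching, never a chain or cycle). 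I would spell this out explicitly. Everything else is a direct appeal to Lemma \ref{FactorLambda}, Lemma \ref{distNudistEll}, and the counting definition \eqref{gks}.
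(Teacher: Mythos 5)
Your proposal is correct and follows essentially the same strategy as the paper: obtain $M_{d,s}$ from $M_d$ by simultaneous column operations, then apply Lemma \ref{FactorLambda} to factor $\bigl(x-\tfrac{s}{2}\bigr)$ out of each modified column, and count the modified columns via the bijection $(\nu,\nu')\mapsto\nu$. Your disjointness justification (that a subtrahend column $\nu'$ can never itself be a modified column, since $\ell_\nu+\ell_{\nu'}=s+1=\ell_{\nu'}+\ell_{\nu''}$ would force $\ell_\nu=\ell_{\nu''}$, contradicting $\ell_{\nu''}<\ell_{\nu'}<\ell_\nu$) is a slightly more explicit version of the paper's ``process from largest $\ell_\nu$ down'' argument, and makes the same point.
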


\begin{proof}
	Notice that through column operations $M_{d,s}$ is obtained from $M_d$. In particular, for $\nu\in \Lambda_{d,n}^*$, column $\nu$ of $M_{d,s}$ is either column $\nu$ of $M_d$ or is column $\nu$ minus column $\nu'$ of $M_d$, where $|\nu|=|\nu'|$, $\ell_\nu>\ell_{\nu'}$, and $\ell_\nu+\ell_{\nu'}-1=s$. These operations can be done simultaneously by starting with the largest $\ell_\nu$ and performing the necessary column operations. This leaves all other columns untouched. Thus we continue to the second largest $\ell_\nu$ and likewise for all $\ell_\nu$. It follows that $\det(M_d)=\det(M_{d,s}).$ Next take any $$(\nu,\nu')\in\lbrace (\nu,\nu')\in \Lambda_{d,n}^*\times \Lambda_{d,n}^*\colon |\nu|=|\nu'|, \ell_{\nu'}<\ell_{\nu}, \ell_\nu+\ell_{\nu'}-1=s \rbrace.$$
	Lemma \ref{FactorLambda} implies that the column $\nu$ of $M_{d,s}$ is divisible by $(x-\frac{s}{2})$. Thus $(x-\frac{s}{2})$ divides $\det(M_{d,s})=\det(M_d)$ with multiplicity
		\begin{align*}
			f(d,s)&=\big|\lbrace (\nu,\nu')\in \Lambda_{d,n}^*\times \Lambda_{d,n}^*\colon |\nu|=|\nu'|, \ell_{\nu'}<\ell_{\nu}, \ell_\nu+\ell_{\nu'}-1=s \rbrace\big|.\qedhere
		\end{align*}
\end{proof}

\begin{rmrk}\label{gks}
	Notice that $f(d,s)=\sum_{k=0}^d g_k(s)$ where 
		\begin{align*}
			g_k(s):=\big|\lbrace (\nu,\nu')\in \mathbb{P}_{k,n}^*\times \mathbb{P}_{k,n}^*\colon \ell_{\nu'}<\ell_{\nu}, \ell_\nu+\ell_{\nu'}-1=s \rbrace\big|.
		\end{align*}
%	Furthermore, by direct calculation $g_k(s)$ is given by
%	\begin{itemize}
%		\item for $0\leq k\leq n$
%			\begin{align*}
%				g_k(s)&=\begin{cases}
%					\left\lfloor \frac{s-2(k-2\lfloor\frac{k}{2}\rfloor-2)}{4}\right\rfloor & \textrm{$s$ odd},~ 1\leq s\leq k,\\
%		\left\lfloor \frac{2k+1-s}{4}\right\rfloor & \textrm{$s$ odd},~ k+1\leq s\leq 2k-3,\\
%		0&\textrm{otherwise.}
%				\end{cases}
%			\end{align*}
%
%		\item for $n+1\leq k\leq 2n+1$
%			\begin{align*}
%				g_k(s)&=\begin{cases}
%					g_{2\lfloor\frac{n+k}{2}\rfloor -k}(s)& \textrm{$s$ odd},~1\leq s\leq 4n-2k+1,\\
%					g_{2\lfloor\frac{n+k}{2}\rfloor -k}(s)+\left\lfloor \frac{s-(4n-2k-1)}{4} \right\rfloor&\textrm{$s$ odd},~4n-2k+3\leq s\leq 2n+2\lfloor\frac{n-k-1}{2}\rfloor+1,\\
%					g_{2\lfloor\frac{n+k}{2}\rfloor -k}(s)+\left\lfloor \frac{4\lfloor\frac{n+1-k}{2}\rfloor +2k-s}{4} \right\rfloor&\textrm{$s$ odd},~2n+2\lfloor\frac{n-k-1}{2}\rfloor+3\leq s\leq 4\lfloor \frac{n+1-k}{2}\rfloor+2k-5,\\
%					\frac{s}{2}-(n-\lfloor \frac{k}{2}\rfloor-1)& \textrm{$s$ even},~2\left(n-\lfloor\frac{k}{2}\rfloor\right)\leq s \leq 2\left\lfloor \frac{n-3+k-2\lfloor \frac{k}{2}\rfloor}{2}\right\rfloor,\\
%					k-\lfloor\frac{n+k}{2}\rfloor&\textrm{$s$ even},~2\left\lfloor \frac{n-1+k-2\lfloor \frac{k}{2}\rfloor}{2}\right\rfloor\leq s\leq 2\left(n-k+\lfloor\frac{n+k}{2}\rfloor\right)\\
%					n-\frac{s}{2}&\textrm{$s$ even},~ 2\left(n-k+\lfloor\frac{n+k}{2}\rfloor+1\right)\leq s\leq 2n-2,\\
%					0&\textrm{otherwise.}
%				\end{cases}
%			\end{align*}
%		\item for $k\geq 2n+2$
%	\begin{align*}
%		g_k(s)&=g_{2n+1}(s).
%	\end{align*}
%	\end{itemize}
\end{rmrk}

\details{ In the next Lemma we prove a stronger version of Lemma \ref{RootMultink}. Indeed, the proof of Lemma \ref{RootMult} depends on Lemma \ref{RootMultink}.

\begin{lemma}\label{RootMult}
	For $n,d\in \Z_{\geq 0}$ and $s=0,1,\dotsc, 2n-2$, the term
		\begin{align*}
			\left( x-\frac{s}{2} \right)^{f(d,s)}
		\end{align*}
	factors from $\det(M_d)$, where $f(d,s)$ is defined:
		\begin{itemize}
			\item for $0\leq d\leq n$
				\begin{align*}
					f(d,s)&=\begin{cases}
						f(s-1,s)+\left\lfloor \frac{s+2}{4} \right\rfloor\left(  \left\lfloor \frac{d+s}{2} \right\rfloor-s+1\right)+\left\lfloor \frac{s+4}{4} \right\rfloor\left(\frac{2\lfloor \frac{d}{2}\rfloor-s-1}{2}\right)&~\textrm{if }s\textrm{ odd}, 1\leq s \leq d-2,\\
					\frac{1}{2}\left( \left\lfloor \frac{d-\lfloor\frac{s}{2}\rfloor}{2}\right\rfloor\left( \left\lfloor \frac{d-\lfloor\frac{s}{2}\rfloor}{2}\right\rfloor+1\right)+\left\lfloor \frac{d-\lfloor\frac{s}{2}\rfloor-1}{2}\right\rfloor\left(\left\lfloor \frac{d-\lfloor\frac{s}{2}\rfloor-1}{2}\right\rfloor+1\right)\right)&~\textrm{if $s$ odd},~ d-1\leq	s\leq 2d-3,\\
					0&\textrm{ otherwise.}
					\end{cases}
				\end{align*}
			\item for $n+1\leq d\leq 2n+1$ then 
				\begin{itemize}
					\item if $s$ is odd then $f(d,s)$ is given by
				\begin{align*}
					f(d,s)&=\begin{cases}
						h_0(d,s)&\textrm{ if }1\leq s\leq 4n-2d+1,\\
						h_0(d,s)+h_1(d,s)&\textrm{ if }4n-2d+3\leq s\leq 2n-d-1,\\
						h_0(d,s)+h_1(2n-s,s)+h_2(d,s)&\textrm{ if }2n-d+1\leq s\leq 2n-2
					\end{cases}
 				\end{align*}
		where
				\begin{align*}
					\hspace{-0.5in}h_0(d,s)&=f(n,s)+\left( d-\left\lfloor \frac{n+d-2}{2}\right\rfloor\right)g_{2\left\lfloor \frac{n+d}{2}\right\rfloor -d}(s)+\left( d-\left\lfloor \frac{n+d-1}{2}\right\rfloor \right) g_{2\left\lfloor \frac{n+d+1}{2}\right\rfloor-d-1}(s)\\
					\hspace{-0.5in}h_1(d,s)&=\frac{1}{2}\left(\left\lfloor \frac{d-2n+\lfloor \frac{s}{2}\rfloor+1}{2} \right\rfloor \left(\left\lfloor \frac{d-2n+\lfloor \frac{s}{2}\rfloor+1}{2} \right\rfloor+1 \right)+\left\lfloor \frac{d-2n+\lfloor \frac{s}{2}\rfloor}{2} \right\rfloor\left(\left\lfloor \frac{d-2n+\lfloor \frac{s}{2}\rfloor}{2} \right\rfloor+1  \right) \right)\\
				\hspace{-0.5in}h_2(d,s)&=\left\lfloor \frac{2n-s}{4}\right\rfloor \left( \left\lfloor \frac{d-n}{2}\right\rfloor -n+\left\lfloor \frac{s}{2}\right\rfloor +1 \right)+\left\lfloor \frac{n-\lfloor \frac{s}{2}\rfloor}{2}\right\rfloor \left( \left\lfloor \frac{d-n+s}{2}\right\rfloor -n+\left\lfloor\frac{s}{2}\right\rfloor\right)
				\end{align*}
					\item if $s$ is even then $f(d,s)$ is given by
				\begin{align*}
					\begin{cases}
						\frac{1}{2}\left(\left\lfloor \frac{d-2n+s+2}{2}\right\rfloor\left( \left\lfloor \frac{d-2n+s+2}{2}\right\rfloor+1 \right)+\left\lfloor \frac{d-2n+s+1}{2}\right\rfloor\left( \left\lfloor \frac{d-2n+s+1}{2}\right\rfloor+1 \right)\right)&\textrm{ if }2(n-\lfloor \frac{d}{s}\rfloor)\leq s\leq 2\lfloor \frac{n-2}{2}\rfloor,\\
						\frac{1}{2}\left(\left\lfloor \frac{d-2n+s+2}{2}\right\rfloor\left( \left\lfloor \frac{d-2n+s+2}{2}\right\rfloor+1 \right)+\left\lfloor \frac{d-2n+s+1}{2}\right\rfloor\left( \left\lfloor \frac{d-2n+s+1}{2}\right\rfloor+1 \right)  \right)&\textrm{ if }2\lfloor\frac{n}{2}\rfloor\leq s\leq 2(n-d+\lfloor \frac{n+d}{2}\rfloor),\\
					\left(n-\frac{s}{2}\right)^2+\left(n-\frac{s}{2}\right)( d-3n+s+1)&\textrm{ if }2(n-d+\lfloor \frac{n+d}{2}\rfloor+1)\leq s\leq 2n-2.
					\end{cases}
				\end{align*}
				\end{itemize}
			\item for $d\geq 2n+1$
				\begin{align*}
					f(d,s)&=f(2n+1,s)+(d-2n-1)g_{2n+1}(s).
				\end{align*}
		\end{itemize}
\end{lemma}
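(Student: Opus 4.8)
The divisibility assertion is already contained in Lemma~\ref{detMatrixds}: it shows that $\left(x-\frac{s}{2}\right)^{f(d,s)}$ divides $\det(M_d)$ in $\Q[x]$, and, grouping by total degree $k$, one has $f(d,s)=\sum_{k=0}^{d}g_k(s)$, where $g_k(s)$ is the number of ordered pairs $(\nu,\nu')\in\PP_{k,n}^{*}\times\PP_{k,n}^{*}$ with $\ell_{\nu'}<\ell_{\nu}$ and $\ell_{\nu}+\ell_{\nu'}=s+1$. Thus the entire content of the statement is the evaluation of this double sum, and the plan is to make $g_k(s)$, and then $f(d,s)$, completely explicit by elementary counting; the work is bookkeeping, not structure.

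The first step is to describe the set $L_k:=\{\ell_{\nu}:\nu\in\PP_{k,n}^{*}\}$. Parametrizing $\nu\in\PP_{k,n}^{*}$ by $j=\nu_2$, which by \eqref{IrredInK} runs over $\max(0,\lfloor k/2\rfloor-n)\le j\le\lfloor k/2\rfloor$, the integer $m:=\nu_1-\nu_2=k-2j$ traverses an arithmetic progression of common difference $2$ and parity $k\bmod 2$, from $k\bmod 2$ up to $\min\bigl(k,\,2n+(k\bmod 2)\bigr)$, and $\ell_{\nu}=\min(m,\,2n+1-m)$ by \eqref{EllCoeff}. By Lemma~\ref{distNudistEll} distinct $\nu\in\PP_{k,n}^{*}$ have distinct $\ell_{\nu}$, so $L_k$ is a set of size $|\PP_{k,n}^{*}|$, and it comes in three regimes: for $k\le n$ one has $L_k=\{k,k-2,\dots\}$ ending at $0$ or $1$, a single progression; for $n<k\le 2n+1$ the reflection $m\mapsto 2n+1-m$ acts on the tail $m>n$, so $L_k$ is the disjoint union of an ``unreflected'' progression inside $\{0,\dots,n\}$ of parity $k\bmod 2$ and a ``reflected'' progression of the opposite parity; and for $k>2n+1$ one gets $L_k=\{0,1,\dots,n\}$, independent of $k$. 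The last case settles the range $d\ge 2n+1$ immediately, since then $g_k(s)=g_{2n+1}(s)$ for every $k\ge 2n+1$, whence $f(d,s)=f(2n+1,s)+(d-2n-1)g_{2n+1}(s)$.

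The second step is the count of $g_k(s)$. Because each value of $\ell$ occurs at most once in $L_k$, $g_k(s)$ is just the number of two-element subsets of $L_k$ with sum $s+1$, and since $L_k$ is a union of at most two arithmetic progressions of step $2$ this is elementary. If $s$ is odd the two members of a pair have the same parity, hence both lie in the unreflected part or both in the reflected part; if $s$ is even the two members have opposite parity, so the pair is ``mixed'' and $g_k(s)=0$ unless $k>n$. These three mechanisms — pairs inside the unreflected parts, pairs inside the reflected parts, and mixed pairs — are exactly the terms $h_0$, $h_1$, $h_2$ in the statement, and the intervals of $s$ displayed in the formula ($1\le s\le d-2$; $d-1\le s\le 2d-3$; $1\le s\le 4n-2d+1$; $4n-2d+3\le s\le 2n-d-1$; and so on) are precisely the ranges on which a fixed combination of these mechanisms is active. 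Summing over $k=0,\dots,d$ — equivalently, iterating the recursion $f(d,s)=f(d-1,s)+g_d(s)$ from the base regime $d\le n$, supplied by the weaker Lemma~\ref{RootMultink}, and then through $n<d\le 2n+1$ — produces sums of the shape $\sum_j\lfloor(\text{linear in }j)\rfloor$, and collecting these yields the $\lfloor\cdot\rfloor(\lfloor\cdot\rfloor+1)$ expressions in each listed case.

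The one genuine difficulty is the bookkeeping, which is concentrated in the middle regime $n+1\le d\le 2n+1$: there the reflected progression in $L_k$ grows with $k$, its parity relative to the unreflected one flips with $k\bmod 2$, and whether $s+1$ is representable, and with what multiplicity, changes as $s$ crosses the thresholds $4n-2d$, $2n-d$ and $2n-2$, so one is forced into many sub-cases and must keep every floor-versus-ceiling correction straight; confirming that consecutive sub-cases agree on their common boundaries (for instance $s=d-2$ against $s=d-1$, or the even/odd split near $s=2n-2$) absorbs most of the effort. A final routine check, which also pins down the monic polynomial $p(x)$ of Lemma~\ref{LeadingIsMatrx}, is that $\sum_{s=0}^{2n-2}f(d,s)\le\deg\det(M_d)=\sum_{\mu\in\Lambda_{d,n}}\mu_2$ (Remark~\ref{DegreeRmrk}); this is immediate from $f(d,s)=\sum_k g_k(s)$ together with $\sum_s g_k(s)\le\binom{|\PP_{k,n}^{*}|}{2}$.
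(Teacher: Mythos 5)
Your proof attempt takes the same route as the paper's: you reduce, via Lemma~\ref{detMatrixds} and Remark~\ref{gks}, to evaluating $f(d,s)=\sum_{k=0}^d g_k(s)$, you observe that since distinct $\nu\in\PP_{k,n}^*$ give distinct $\ell_\nu$ (Lemma~\ref{distNudistEll}) the count $g_k(s)$ is the number of two-element subsets of $L_k=\{\ell_\nu:\nu\in\PP_{k,n}^*\}$ with sum $s+1$, you give the correct three-regime description of $L_k$, and you correctly observe that the regime $k>2n+1$ gives $L_k=\{0,\dots,n\}$ independently of $k$, which settles the case $d\ge 2n+1$. All of that matches the paper's strategy.

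However, there are two genuine problems. First, the lemma is an explicit identity, and you never verify it: you describe the sums $\sum_k g_k(s)$ and then assert that ``collecting these yields the $\lfloor\cdot\rfloor(\lfloor\cdot\rfloor+1)$ expressions in each listed case,'' which is exactly the content of the lemma. The paper's proof actually carries this out, case by case, splitting the sum over $k$ according to which $k$'s contribute and how $g_k(s)$ simplifies in each range of $s$; none of that computation is present in your argument, so the displayed formulas for $f(d,s)$ remain unproven. Second, your structural interpretation of $h_0$, $h_1$, $h_2$ is wrong. You identify them with the three counting mechanisms ``pairs inside the unreflected part,'' ``pairs inside the reflected part,'' and ``mixed pairs.'' But $h_0,h_1,h_2$ occur only in the $s$-odd branch, and by your own parity observation a pair $\{a,b\}$ with $a+b=s+1$ even must have $a,b$ of equal parity, so for $s$ odd there are no mixed pairs at all. (Mixed pairs are precisely the $s$-even case, handled by the separate even branch of the formula.) In the paper's computation $h_0,h_1,h_2$ are instead contributions indexed by ranges of $k$ in which $g_k(s)$ has different closed forms — $h_0$ collects the $f(n,s)$ base together with the ``reflected copy'' $g_{2\lfloor(n+k)/2\rfloor-k}(s)$ for $n<k\le d$, while $h_1,h_2$ collect the surplus terms that appear once $k$ crosses the thresholds $2n-\lfloor s/2\rfloor$ and $3n-s$ — so the partition into three terms does not line up with your parity trichotomy. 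Because the lemma's statement \emph{is} the explicit formula, these omissions constitute a real gap rather than a stylistic shortcut.
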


\begin{proof}
	Fix $s=0,1,\dotsc, 2n-2$. Lemma \ref{RootMultink} implies that for all $2\leq k\leq d$ the term $\left( x-\frac{s}{2} \right)^{g_k(s)}$  can be factors from $\det(M_d)$. But in the proof of Lemma \ref{RootMultink} it is seen that this factor results only from operations on columns indexed by $\nu\in \Lambda_{d,n}^*$ with $\nu_1+\nu_2=k$. Thus we can simultaneously apply Lemma \ref{RootMultink} for all $0\leq k\leq d$ to conclude that $\left(x-\frac{s}{2}\right)$ can be factored with multiplicity
		\begin{align}\label{SumToMulti}
			\sum_{k=2}^d g_k(s).
		\end{align}
We will show that $f(d,s)$ is exactly Equation \eqref{SumToMulti}. We do this in cases:
	\begin{itemize}
		\item When $0\leq d\leq n$ we have that $g_k(s)\neq 0$ if $k\geq \lfloor \frac{s}{2}\rfloor+2$, $s$ is odd, and $k-2\lfloor \frac{k}{2}\rfloor\leq s\leq 2k-3$. We then break up this case into the following subcases:
			\begin{itemize}
				\item If $d-1\leq s\leq 2d-3$ then
					\begin{align*}
						\sum_{k=0}^d g_k(s)&=\sum_{k=\lfloor \frac{s}{2}\rfloor +2}^{d-2} \left\lfloor \frac{2k+1-s}{4} \right\rfloor+g_{d-1}(s)+g_d(s)\\
						&=\sum_{i=2}^{d-2-\lfloor\frac{s}{2}\rfloor}\left\lfloor \frac{i}{2}\right\rfloor +g_{d-1}(s)+g_d(s)\\
						&=f(d,s).
					\end{align*}
	The last equality can be confirmed by directly checking the possible values for $g_{d-1}(s)$ and $g_{d}(s)$.
				\item If $1\leq s\leq d-2$ then we have
					\begin{align*}
						\sum_{k=0}^d g_k(s)&=\sum_{k=\lfloor\frac{s}{2}\rfloor+2}^{s-1} g_k(s)+\sum_{k=s}^{d} g_k(s)\\
						&=\sum_{k=\lfloor\frac{s}{2}\rfloor+2}^{s-1}\left\lfloor \frac{2k+1-s}{4} \right\rfloor+\sum_{k=s}^d\left\lfloor \frac{s-2(k-2\lfloor\frac{k}{2}\rfloor -2)}{4} \right\rfloor\\
						&=\sum_{i=2}^{\lfloor\frac{s}{2}\rfloor}\left\lfloor\frac{i}{2}\right\rfloor+\sum_{k=s}^d\left\lfloor \frac{s-2k+4\lfloor\frac{k}{2}\rfloor +2)}{4} \right\rfloor\\
						&=f(s-1,s)+\left\lfloor \frac{s+2}{4} \right\rfloor\left(  \left\lfloor \frac{d+s}{2} \right\rfloor-s+1\right)+\left\lfloor \frac{s+4}{4} \right\rfloor\left(\frac{2\lfloor \frac{d}{2}\rfloor-s-1}{2}\right)=f(d,s).
					\end{align*}
			\end{itemize}
		\item When $n+1\leq d\leq 2n+1$ we consider the cases for $s$ odd and even. For $s$ odd:
			\begin{itemize}
				 \item If $1\leq s\leq 4n-2d+1$, then $g_k(s)=g_{2\lfloor \frac{n+k}{2}\rfloor -k}(s)$ for all $n+1\leq k\leq d$. Therefore
				 	\begin{align*}
				 		\sum_{k=0}^d g_k(s)&=f(n,s)+\sum_{k=n+1}^dg_{2\lfloor \frac{n+k}{2}\rfloor -k}(s)\\
				 		&=f(n,s)+\left( d-\left\lfloor \frac{n+d}{2}\right\rfloor +1 \right)g_{2\lfloor \frac{n+d}{2}\rfloor -d}(s)\\
				 		&\hspace{2in}+\left( d-\left\lfloor \frac{n+d-1}{2} \right\rfloor \right)g_{2\lfloor \frac{n+d-1}{2}\rfloor -d+1}(s)\\
				 		&=f(d,s).
				 	\end{align*}
				 \item If $4n-2d+3\leq s\leq 2n+2\lfloor \frac{n-d-1}{2}\rfloor +1$, then $g_k(s)=g_{2\lfloor \frac{n+k}{2}\rfloor -k}(s)$ for all $n+1\leq k\leq 2n-\lfloor\frac{s}{2}\rfloor$ and $g_k(s)=g_{2\lfloor \frac{n+k}{2}\rfloor -k}(s)+\left\lfloor \frac{s-4n+2k+1}{4} \right\rfloor$ for all $2n-\lfloor\frac{s}{2}\rfloor+1\leq k\leq d$. Therefore
				 	\begin{align*}
				 		\sum_{k=0}^d g_k(s)&=f(n,s) +\sum_{k=n+1}^d g_{2\lfloor \frac{n+k}{2}\rfloor-k}(s)+\sum_{k=2n-\lfloor \frac{s}{2}\rfloor+1}^d \left\lfloor \frac{s-4n+2k+1}{4} \right\rfloor\\
				 		&=h_0(d,s)+\sum_{i=1}^{d-2n+\lfloor\frac{s}{2}\rfloor}\left\lfloor \frac{i+1}{2}\right\rfloor\\
				 		&=h_0(d,s)+h_1(d,s)=f(d,s).
%\frac{1}{2}\left(\left\lfloor \frac{d-2n+\lfloor \frac{s}{2}\rfloor+1}{2} \right\rfloor \left(\left\lfloor \frac{d-2n+\lfloor \frac{s}{2}\rfloor+1}{2} \right\rfloor+1 \right)\right.\\
%				 		&\hspace{1.5in}+\left.\left\lfloor \frac{d-2n+\lfloor \frac{s}{2}\rfloor}{2} \right\rfloor\left(\left\lfloor \frac{d-2n+\lfloor \frac{s}{2}\rfloor}{2} \right\rfloor+1  \right) \right)
				 	\end{align*}
				\item If $2n+2\lfloor \frac{n-d-1}{2}\rfloor +3\leq s\leq 2n-3$, then 
					\begin{align*}
						g_k(s)&=\begin{cases}
							g_{2\lfloor \frac{n+k}{2}\rfloor -k}(s) &\textrm{ for }n+1\leq k\leq 2n-\lfloor\frac{s}{2}\rfloor,\\
							g_{2\lfloor \frac{n+k}{2}\rfloor -k}(s)+\left\lfloor \frac{s-4n+2k+1}{4} \right\rfloor&\textrm{ for }2n-\lfloor\frac{s}{2}\rfloor+1\leq k\leq 3n-s,\\
							g_{2\lfloor \frac{n+k}{2}\rfloor -k}(s)+\left\lfloor \frac{4\lfloor\frac{n+1-k}{2}\rfloor +2k-s}{4} \right\rfloor&\textrm{ for all }3n-s+1\leq k\leq d.
						\end{cases}
					\end{align*}
				Therefore
					\begin{align*}
						\sum_{k=0}^d g_k(s)&=f(n,s)+\sum_{k=n+1}^d g_{2\lfloor \frac{n+k}{2}\rfloor-k}(s)+\sum_{k=2n-\lfloor \frac{s}{2}\rfloor+1}^{3n-s} \left\lfloor \frac{s-4n+2k+1}{4} \right\rfloor\\
						&\hspace{1in}+\sum_{k=3n-s+1}^d\left\lfloor \frac{4\lfloor \frac{n+1-k}{2}\rfloor+2k-s}{4}\right\rfloor\\
						&=h_0(d,s)+h_1(3n-s,s)+\sum_{i=n-\lfloor \frac{s}{2}\rfloor}^{\lfloor \frac{d-n}{2}\rfloor} \left\lfloor \frac{2n-s}{4}\right\rfloor +\sum_{i=n-\lfloor\frac{s}{2}\rfloor}^{\lfloor\frac{d-n+1}{2}\rfloor-1}\left\lfloor \frac{2n-s+2}{4}\right\rfloor\\
						&=h_0(d,s)+h_1(3n-s,s)+h_2(d,s)=f(d,s).
					\end{align*}
			\end{itemize}
			For $s$ even:
				\begin{itemize}
					\item if $2(n-\lfloor\frac{d}{2}\rfloor)\leq s\leq 2\lfloor\frac{n-2}{2}\rfloor$ then
						\begin{align*}
							g_k(s)&=\begin{cases}
								0& \textrm{ if }0\leq k\leq 2n-s-1\\
								\frac{s}{2}-n+\lfloor \frac{k}{2}\rfloor +1&\textrm{ if }2n-s\leq k\leq d.
							\end{cases}
						\end{align*}
					Therefore
						\begin{align*}
							\sum_{k=0}^d g_k(s)&=\sum_{k=2n-s}^d \frac{s}{2}-n+\lfloor \frac{k}{2}\rfloor +1\\
							&=\frac{1}{2}\left(\left\lfloor \frac{d-2n+s+2}{2}\right\rfloor\left( \left\lfloor \frac{d-2n+s+2}{2}\right\rfloor+1 \right)\right.\\
							&\hspace{1in}\left.+\left\lfloor \frac{d-2n+s+1}{2}\right\rfloor\left( \left\lfloor \frac{d-2n+s+1}{2}\right\rfloor+1 \right)  \right)\\
							&=f(d,s).
						\end{align*}
					\item if $2\lfloor\frac{n}{2}\rfloor\leq s\leq 2(n-d+\lfloor \frac{n+d}{2}\rfloor)$ then 
						\begin{align*}
							g_k(s)=\begin{cases}
								0&\textrm{ if }0\leq k\leq n,\\
								k-\lfloor\frac{n+k}{2}\rfloor&\textrm{ if }n+1\leq k\leq d.
							\end{cases}
						\end{align*}
					Therefore
						\begin{align*}
							\sum_{k=0}^d g_k(s)&=\sum_{k=n+1}^d k-\left\lfloor \frac{n+k}{2}\right\rfloor\\
							&=\frac{1}{2}\left(\left\lfloor \frac{d-n+1}{2} \right\rfloor\left( \left\lfloor \frac{d-n+1}{2} \right\rfloor+1 \right)\right.\\
							&\hspace{1in}+\left.\left\lfloor \frac{d-n}{2} \right\rfloor\left( \left\lfloor \frac{d-n}{2} \right\rfloor+1 \right) \right)\\
							&=f(d,s).
						\end{align*}
					\item if $2(n-d+\lfloor \frac{n+d}{2}\rfloor+1)\leq s\leq 2n-2$ then
						\begin{align*}
							g_k(s)&=\begin{cases}
								0&\textrm{ if }0\leq k\leq n,\\
								k-\lfloor\frac{n+k}{2}\rfloor&\textrm{ if } n+1\leq k\leq 3n-s-1,\\
								n-\frac{s}{2}&\textrm{ if }3n-s\leq k\leq d.
							\end{cases}
						\end{align*}
					Therefore
						\begin{align*}
							\sum_{k=0}^d g_k(s)&=\sum_{k=n+1}^{3n-s-1} k-\left\lfloor \frac{n+k}{2}\right\rfloor+\sum_{k=3n-s}^d n-\frac{s}{2}\\
							&=\frac{1}{2}\left(\left\lfloor \frac{2n-s}{2} \right\rfloor\left( \left\lfloor \frac{2n-s}{2} \right\rfloor+1 \right)+\left\lfloor \frac{2n-s-1}{2} \right\rfloor\left( \left\lfloor \frac{2n-s-1}{2} \right\rfloor+1 \right) \right)\\
							&\hspace{1in}+\left(n-\frac{s}{2}\right)( d-3n+s+1)\\
							&=f(d,s)
						\end{align*}
				\end{itemize}
		\item For $d\geq 2n+2$ then it is straight forward to get
			\begin{align*}
				\sum_{k=2}^d g_k(s)&=\sum_{k=0}^{2n+1} g_k(s)+\sum_{k=2n+2}^d g_{2n+1}(s)\\
				&=f(2n+1,s)+ (d-2n-1)g_{2n+1}(s)\\
				&=f(d,s).\qedhere
			\end{align*}
	\end{itemize}
\end{proof}}

\begin{lemma}\label{SumEquality}
	Let $d\in \Z_{\geq 0}$. Then $\sum_{s=0}^{2n-2}f(d,s)=\sum_{\mu\in \Lambda_{d,n}} \mu_2.$
\end{lemma}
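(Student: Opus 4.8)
The plan is to show that both sides of the claimed identity count the same quantity, namely $\sum_{k=0}^{d}\binom{\min(\lfloor k/2\rfloor,n)+1}{2}$, and then simply compare.

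First I would handle the left-hand side. Using Remark~\ref{gks} one has $\sum_{s=0}^{2n-2}f(d,s)=\sum_{k=0}^{d}\sum_{s=0}^{2n-2}g_k(s)$, so it suffices to evaluate $\sum_{s=0}^{2n-2}g_k(s)$ for each $k$. By definition this counts the ordered pairs $(\nu,\nu')\in\PP_{k,n}^*\times\PP_{k,n}^*$ with $\ell_{\nu'}<\ell_\nu$ and $\ell_\nu+\ell_{\nu'}-1\in\lbrace 0,1,\dotsc,2n-2\rbrace$. The key observation is that the second condition is automatic: if $\ell_{\nu'}<\ell_\nu$ then $\nu\neq\nu'$, and since $|\nu|=|\nu'|=k$, Lemma~\ref{EllsAdded} gives $\ell_\nu+\ell_{\nu'}-1\in\lbrace 0,1,\dotsc,2n-2\rbrace$. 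Hence $\sum_{s=0}^{2n-2}g_k(s)$ equals the number of ordered pairs $(\nu,\nu')\in\PP_{k,n}^*\times\PP_{k,n}^*$ with $\ell_{\nu'}<\ell_\nu$. By Lemma~\ref{distNudistEll}, the assignment $\nu\mapsto\ell_\nu$ is injective on $\PP_{k,n}^*$, so this count is $\binom{|\PP_{k,n}^*|}{2}$; and by the computation in the proof of Lemma~\ref{prelemma}, $|\PP_{k,n}^*|=\min(\lfloor k/2\rfloor,n)+1$. Therefore $\sum_{s=0}^{2n-2}f(d,s)=\sum_{k=0}^{d}\binom{\min(\lfloor k/2\rfloor,n)+1}{2}$.

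Next I would compute the right-hand side directly. Since $\Lambda_{d,n}=\bigcup_{k=0}^{d}\PP_{k,n}$ is a disjoint union, $\sum_{\mu\in\Lambda_{d,n}}\mu_2=\sum_{k=0}^{d}\sum_{\mu\in\PP_{k,n}}\mu_2$. For a fixed $k$, the elements of $\PP_{k,n}$ are exactly the pairs $(k-j,j)$ with $0\le j\le\min(\lfloor k/2\rfloor,n)$, each occurring once, so $\sum_{\mu\in\PP_{k,n}}\mu_2=\sum_{j=0}^{\min(\lfloor k/2\rfloor,n)}j=\binom{\min(\lfloor k/2\rfloor,n)+1}{2}$. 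Summing over $k=0,\dotsc,d$ yields the same expression obtained for the left-hand side, which proves the identity. The proof involves no real obstacle; the only subtle points are that the range restriction $s\in\lbrace 0,\dotsc,2n-2\rbrace$ in the definition of $f(d,s)$ discards no pairs (this is precisely Lemma~\ref{EllsAdded}) and that $\nu\mapsto\ell_\nu$ separates the elements of each $\PP_{k,n}^*$ (this is Lemma~\ref{distNudistEll}).
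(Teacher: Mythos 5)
Your proof is correct and follows essentially the same strategy as the paper: both reduce the left-hand side to $\sum_{k=0}^d \tfrac{1}{2}\min(\lfloor k/2\rfloor,n)\bigl(\min(\lfloor k/2\rfloor,n)+1\bigr)$ by recognizing $\sum_s g_k(s)$ as the count of ordered pairs $(\nu,\nu')\in\PP_{k,n}^*\times\PP_{k,n}^*$ with $\ell_{\nu'}<\ell_\nu$, using Lemma \ref{EllsAdded} to see that the range of $s$ discards nothing and Lemma \ref{distNudistEll} for injectivity of $\nu\mapsto\ell_\nu$, and then verify the right-hand side agrees. Your evaluation of $\sum_{\mu\in\Lambda_{d,n}}\mu_2$ by grouping on $k=|\mu|$ and summing $0+1+\cdots+\min(\lfloor k/2\rfloor,n)$ is a mild streamlining of the paper's computation, which iterates over $\mu_1$ and splits into the cases $d\leq 2n+1$ and $d\geq 2n+2$.
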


\begin{proof}
		First, we have the identity
			\begin{align*}
					\sum_{s=0}^{2n-2} g_k(s)&=\frac{1}{2} \min\left(\left\lfloor\frac{k}{2}\right\rfloor,n\right)\left( \min\left(\left\lfloor\frac{k}{2}\right\rfloor,n\right)+1\right),
%					\begin{cases}
%						\frac{1}{2}\left\lfloor \frac{k}{2}\right\rfloor\left( \left\lfloor \frac{k}{2}\right\rfloor+1\right)&\textrm{ if }k\leq 2n+1,\\
%						\frac{n(n+1)}{2}&\textrm{ if } k\geq 2n+2.
%					\end{cases}
			\end{align*}
		for $g_k(s)$ defined in Remark \ref{gks}. This identity follows from the fact that the left hand side is counting the cardinality of
			\begin{align*}
				\bigcup_{s=0}^{2n-2}\lbrace (\nu,\nu')\in \mathbb{P}_{k,n}^*\times \mathbb{P}_{k,n}^*\colon \ell_{\nu'}<\ell_\nu,~\ell_\nu+\ell_{\nu'}-1=s \rbrace=\lbrace (\nu,\nu')\in \mathbb{P}_{k,n}^*\times \mathbb{P}_{k,n}^*\colon \ell_{\nu'}<\ell_\nu \rbrace
			\end{align*}
and the fact that there are $\min\left(\left\lfloor\frac{k}{2}\right\rfloor,n\right)+1$ possible $\ell_\nu$. Therefore we have
			\begin{align*}
				\sum_{s=0}^{2n-2}f(d,s)&=\sum_{s=0}^{2n-2}\sum_{k=0}^d g_k(s)=\sum_{k=0}^d \sum_{s=0}^{2n-2} g_k(s)\\
				&=\sum_{k=0}^d\frac{1}{2} \min\left(\left\lfloor\frac{k}{2}\right\rfloor,n\right)\left( \min\left(\left\lfloor\frac{k}{2}\right\rfloor,n\right)+1\right).
			\end{align*}
	Next $\sum_{\mu\in \Lambda_{d,n}}\mu_2$ is calculated for the following two cases.
		\begin{itemize}
			\item For $0\leq d\leq 2n+1$,
				\begin{align*}
					\sum_{\mu\in \Lambda_{d,n}}\mu_2&=\sum_{\mu_1=0}^{\lfloor \frac{d}{2}\rfloor}\sum_{\mu_2=0}^{\mu_1}\mu_2+\sum_{\mu_1=\lfloor \frac{d}{2}\rfloor+1}^d\sum_{\mu_2=0}^{d-\mu_1}\mu_2\\
					&=\sum_{\mu_1=0}^{\lfloor \frac{d}{2}\rfloor}\frac{\mu_1(\mu_1+1)}{2}+\sum_{\mu_1=\lfloor \frac{d}{2}\rfloor+1}^d\frac{(d-\mu_1)(d-\mu_1+1)}{2}=\sum_{\mu_1=0}^d\frac{\left\lfloor\frac{\mu_1}{2}\right\rfloor(\left\lfloor\frac{\mu_1}{2}\right\rfloor+1)}{2}.
				\end{align*}
			\item For $d\geq 2n+2$ we have the following analogous calculation
				\begin{align*}
					\sum_{\mu\in \Lambda_{d,n}}\mu_2&=\sum_{\mu_1=0}^{n}\sum_{\mu_2=0}^{\mu_1}\mu_2+\sum_{\mu_1=n+1}^{d-n-1}\sum_{\mu_2=0}^n \mu_2+\sum_{\mu_1=d-n}^d\sum_{\mu_2=0}^{d-\mu_1}\mu_2\\
					%&=\sum_{\mu_1=0}^{n}\frac{\mu_1(\mu_1+1)}{2}+\sum_{\mu_1=n+1}^{d-n-1}\frac{n(n+1)}{2}+\sum_{\mu_1=d-n}^d\frac{(d-\mu_1)(d-\mu_1+1)}{2}\\
					&=\sum_{\mu_1=0}^{2n+1}\frac{\left\lfloor\frac{\mu_1}{2}\right\rfloor\left( \left\lfloor\frac{\mu_1}{2}\right\rfloor+1 \right)}{2}+(d-2n-1)\frac{n(n+1)}{2}.
				\end{align*}
		\end{itemize}
	For both of these cases, it is easy to verify that $\sum_{s=0}^{2n-2}f(d,s)=\sum_{\mu\in \Lambda_{d,n}} \mu_2.$
\end{proof}

Finally, we have all the tools needed to prove the main result of this section.

\begin{theorem}\label{MainTheorem}
	Let $d\in \Z_{\geq 0}$. Then
		\begin{align*}
			\det(M_d)&=\det(M_d')\prod_{s=0}^{2n-2} \left( x-\frac{s}{2}\right)^{f(d,s)},
		\end{align*}
	where $f(d,s)$ is given in Lemma \ref{detMatrixds}.
\end{theorem}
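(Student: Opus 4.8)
The plan is to assemble the statement from four already-established ingredients: Theorem \ref{MdScalarThm} (which guarantees $\det(M_d')\neq 0$), Lemma \ref{LeadingIsMatrx} (which then forces $\det(M_d)=\det(M_d')p(x)$ with $p$ monic), Lemma \ref{detMatrixds} (which produces the linear factors), and Lemma \ref{SumEquality} (which shows these factors already account for the full degree of $p$).

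First I would invoke Theorem \ref{MdScalarThm}: the displayed product formula there is manifestly nonzero (it is a product of Vandermonde determinants in distinct integers and of nonzero differences $2(\ell_\nu-\ell_{\nu_{(j)}})$, the latter being nonzero by Corollary \ref{EllvNotEllvj}), so $\det(M_d')\neq 0$. Hence Lemma \ref{LeadingIsMatrx} applies and gives a monic $p(x)\in\Q[x]$ with $\det(M_d)=\det(M_d')\,p(x)$. Since the entries of $M_d'$ lie in $\Z$, the determinant $\det(M_d')$ is a nonzero \emph{constant} (independent of $x$); combining with Remark \ref{DegreeRmrk} we get $\deg(p)=\deg(\det(M_d))=\sum_{\mu\in\Lambda_{d,n}}\mu_2$.

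Next I would feed in Lemma \ref{detMatrixds}: for each $s\in\{0,1,\dotsc,2n-2\}$ the polynomial $\left(x-\frac{s}{2}\right)^{f(d,s)}$ divides $\det(M_d)$ in $\Q[x]$, and because $\det(M_d')$ is a nonzero constant it already divides $p(x)$. The linear polynomials $x-\frac{s}{2}$, for $s=0,1,\dotsc,2n-2$, have pairwise distinct roots $\frac{s}{2}$ and are therefore pairwise coprime in $\Q[x]$, so the products combine and $\prod_{s=0}^{2n-2}\left(x-\frac{s}{2}\right)^{f(d,s)}$ divides $p(x)$. This divisor is visibly monic of degree $\sum_{s=0}^{2n-2}f(d,s)$, which by Lemma \ref{SumEquality} equals $\sum_{\mu\in\Lambda_{d,n}}\mu_2=\deg(p)$. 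A monic polynomial that is divisible by another monic polynomial of the same degree equals that polynomial, so $p(x)=\prod_{s=0}^{2n-2}\left(x-\frac{s}{2}\right)^{f(d,s)}$, and substituting into $\det(M_d)=\det(M_d')\,p(x)$ yields the claim.

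I do not expect a genuine obstacle here: all of the combinatorial and determinantal labour has been absorbed into Lemmas \ref{detMatrixds} and \ref{SumEquality} and Theorem \ref{MdScalarThm}. The only points needing a word of care are (i) recording that $\det(M_d')$ does not depend on $x$, so that it may be cancelled from the divisibility relations, and (ii) noting that the distinctness of the values $\frac{s}{2}$ is what lets the individual factors $\left(x-\frac{s}{2}\right)^{f(d,s)}$ be multiplied together rather than merely bounding a single multiplicity. Both are immediate, so the proof is essentially a degree count.
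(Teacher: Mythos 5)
Your proposal is correct and follows essentially the same route as the paper: combine Theorem \ref{MdScalarThm} and Lemma \ref{LeadingIsMatrx} to write $\det(M_d)=\det(M_d')\,p(x)$ with $p$ monic, feed in the linear-factor divisibility from Lemma \ref{detMatrixds}, and close with the degree count supplied by Remark \ref{DegreeRmrk} and Lemma \ref{SumEquality}. The only difference is cosmetic — you spell out the pairwise coprimality of the factors $x-\frac{s}{2}$ and the $x$-independence of $\det(M_d')$, both of which the paper leaves implicit.
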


\begin{proof}
	Recall that Lemma \ref{LeadingIsMatrx} and Theorem \ref{MdScalarThm} imply that $\det(M_d)=\det(M_d')p(x),$ for some monic $p(x)\in \Q[x]$. Furthermore, Lemma \ref{detMatrixds} implies that $\left(x-\frac{s}{2}\right)^{f(d,s)}$ divides $p(x)$, for all $s=0,\dotsc, 2n-2$. Therefore there is some monic $q(x)\in \Q[x]$ such that
		\begin{align*}
			p(x)&=q(x)\prod_{s=0}^{2n-2} \left( x-\frac{s}{2}\right)^{f(d,s)}.
		\end{align*}
	In particular, $\deg(p(x))\geq \sum_{s=0}^{2n-2}f(d,s).$ But Remark \ref{DegreeRmrk} tells us that $\deg(p(x))= \sum_{\mu\in \Lambda_{d,n}}\mu_2$. It then follows from Lemma \ref{SumEquality} that $$\sum_{\mu\in \Lambda_{d,n}}\mu_2=\sum_{s=0}^{2n-2}f(d,s)\leq \deg(p(x))= \sum_{\mu\in \Lambda_{d,n}}\mu_2.$$
	That is, $q(x)=1$ and 
		\begin{align*}
			\det(M_d)&=\det(M_d')p(x)=\det(M_d')\prod_{s=0}^{2n-2} \left( x-\frac{s}{2}\right)^{f(d,s)}.\qedhere
		\end{align*}
\end{proof}

Thus we can prove Theorem \ref{MainIntroTheorem} and thereby answer Problem \ref{AbsCapProb} in the affirmative.\\

\noindent \emph{Proof of Theorem \ref{MainIntroTheorem}.} Let $d\in \Z_{\geq 0}$. Recall that the image of $C^{\mu_2}Z^{\mu_1-\mu_2}$ is a differential operator of order $|\mu|$ and therefore is an element of $\bigoplus_{k=0}^{|\mu|}\mathcal{PD}^{k}(\C^{1|2n})^\g$. Lemma \ref{prelemma} implies $|B_{d,n}|=|\lbrace D_\nu \colon |\nu|\leq d\rbrace|$. Therefore if $B_{d,n}$ is a linearly independent set then it is a basis of $\bigoplus_{k=0}^d\mathcal{PD}^k(\C^{1|2n})^\g$. 

Identify $C^{\mu_2}Z^{\mu_1-\mu_2}$ with its image and fix non-zero $v_\nu\in V_\nu $, for each $\nu\in \Lambda_{d,n}^*$. Let $a_\mu\in \C$ be such that 
	\begin{align}\label{linearcombCZ}
		\sum_{\mu\in \Lambda_{d,n}} a_\mu C^{\mu_2}Z^{\mu_1-\mu_2}=0.
	\end{align}
We evaluate \eqref{linearcombCZ} on each $v_\nu$. Since $C^{\mu_2}Z^{\mu_1-\mu_2}$ acts on $v_\nu$ by $\lambda_{\mu,\nu}(n)$ we have the following matrix relation
	\begin{align*}
		[a_\mu]_{\mu\in \Lambda_{d,n}}[\lambda_{\mu,\nu}(n)]_{\substack{\mu\in\Lambda_{d,n}\\ \nu\in \Lambda_{d,n}^*}}=0,
	\end{align*}
where $[a_\mu]_{\mu\in \Lambda_{d,n}}$ denotes a row vector. The matrix $[\lambda_{\mu,\nu}(n)]_{\substack{\mu\in\Lambda_{d,n}\\ \nu\in \Lambda_{d,n}^*}}$ is exactly $M_d$ with entries evaluated at $x=n$. Theorem \ref{MainTheorem} implies that $\det(M_d)(n)\neq 0$ and therefore $[\lambda_{\mu,\nu}(n)]_{\substack{\mu\in\Lambda_{d,n}\\ \nu\in \Lambda_{d,n}^*}}$ is invertible. In particular, $a_\mu=0$, for all $\mu\in \Lambda_{d,n}$ and hence $B_{d,n}$ is linearly independent.\qed

%%%%%%%%%%%%%%%%%%%%%%%%%%%%%%%%%%%%%%%%%%%
%
%
\section{Solution of the Capelli Eigenvalue Problem}
%
%
%%%%%%%%%%%%%%%%%%%%%%%%%%%%%%%%%%%%%%%%%%%

In the previous section we proved Theorem \ref{MainIntroTheorem} and thereby answered Problem \ref{AbsCapProb}. For the remainder of this article we will answer Problem \ref{CapEvalProb} by proving Theorem \ref{CapEvalThm}. We will need the following lemma, which can be found in \cite[Lemma 1]{DebEel09}.

\begin{lemma}\label{LapRRel}
	Let $w\in \mathcal{P}^k(V)$. Then
		\begin{align*}
			\nabla^2(R^{2t}w)&=2t(2k+1-2n+2(t-1))R^{2(t-1)}w+R^{2t}\nabla^2 w.
		\end{align*}
\end{lemma}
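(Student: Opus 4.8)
The plan is to prove the identity by induction on $t$, reducing the general case to the base case $t=1$, which is a direct computation with the $\frak{sl}_2$-triple. First I would recall that $\left\{E+\tfrac{1-2n}{2},(1/2)R^2,-(1/2)\nabla^2\right\}$ is a standard $\frak{sl}_2(\C)$-triple, so writing $h=2E+1-2n$, $e=(1/2)R^2$, $f=-(1/2)\nabla^2$, we have $[h,e]=2e$, $[h,f]=-2f$, $[e,f]=h$. On $\mathcal{P}^k(V)$ the operator $E$ acts by the scalar $k$, so $h$ acts by $2k+1-2n$; more generally, since $R^{2t}$ raises degree by $2t$, on $R^{2t}\mathcal{P}^k(V)$ the operator $h$ acts by $2(k+2t)+1-2n$. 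The base case $t=1$ then amounts to computing $\nabla^2 R^2 w = -2f(2e)w = -4ef\,w - 4[e,f]\cdot\text{(correction)}$; more carefully, $fe = ef - h$ as operators, so $f(e w) = e(f w) - h w$, i.e. $-(1/2)\nabla^2\big((1/2)R^2 w\big) = (1/2)R^2\big(-(1/2)\nabla^2 w\big) - (2k+1-2n)w$, which rearranges to $\nabla^2(R^2 w) = 2(2k+1-2n)w + R^2\nabla^2 w$. This matches the claimed formula at $t=1$ since $2t(2k+1-2n+2(t-1)) = 2(2k+1-2n)$ there.

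Next I would carry out the inductive step. Assume the formula holds for $t-1$; apply $\nabla^2$ to $R^{2t}w = R^2\big(R^{2(t-1)}w\big)$ and use the $t=1$ case with $R^{2(t-1)}w \in \mathcal{P}^{k+2(t-1)}(V)$ in place of $w$, so the relevant degree scalar is $2(k+2(t-1))+1-2n$. This gives
\begin{align*}
\nabla^2(R^{2t}w) &= 2\big(2(k+2(t-1))+1-2n\big)R^{2(t-1)}w + R^2\nabla^2\big(R^{2(t-1)}w\big).
\end{align*}
Then substitute the inductive hypothesis $\nabla^2(R^{2(t-1)}w) = 2(t-1)\big(2k+1-2n+2(t-2)\big)R^{2(t-2)}w + R^{2(t-1)}\nabla^2 w$ into the last term and collect the coefficients of $R^{2(t-1)}w$. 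The arithmetic to check is that $2\big(2k+4(t-1)+1-2n\big) + 2(t-1)\big(2k+1-2n+2(t-2)\big)$ equals $2t\big(2k+1-2n+2(t-1)\big)$; this is a routine polynomial identity in $k,n,t$ that I would verify by expanding both sides.

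Alternatively — and this is probably cleaner — I would avoid induction entirely and prove it inside $\U(\frak{sl}_2)$ by establishing the operator identity $[f, e^t] = t\,e^{t-1}\big(h - (t-1)\big)$ (equivalently $f e^t = e^t f + t e^{t-1}(h-t+1)$), which is a standard consequence of $[e,f]=h$, $[h,e]=2e$ and follows by a one-line induction on $t$; applying this to $w$ with $h$ acting by $2k+1-2n$ and translating back through $e=(1/2)R^2$, $f=-(1/2)\nabla^2$ yields the stated formula directly. The only mild obstacle is bookkeeping the factors of $1/2$ and the shift $1-2n$ correctly when passing between the abstract $\frak{sl}_2$ relations and the concrete operators; there is no conceptual difficulty, since the commutation relations among $R^2$, $\nabla^2$, $E$ are exactly those of an $\frak{sl}_2$-triple as already recorded in the excerpt.
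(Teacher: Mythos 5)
Your overall plan matches the paper's proof exactly: induction on $t$, with the base case $t=1$ read off from the commutator of $\nabla^2$ and $R^2$ (the paper uses $[\nabla^2,R^2]=4E+2(1-2n)$ directly). The inductive step is set up correctly and the closing arithmetic identity is verified correctly. However, there is a normalization error in your base-case derivation that, taken literally, gives the wrong constant. With $e=(1/2)R^2$ and $f=-(1/2)\nabla^2$, the Cartan element of the triple is $h=E+\tfrac{1-2n}{2}$ (this is exactly what the excerpt records), not $h=2E+1-2n$. With your $h$ one has $[e,f]=h/2$ and $[h,e]=4e$, so the ``standard'' relations you invoke do not hold. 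Concretely, the displayed line
\begin{align*}
-(1/2)\nabla^2\bigl((1/2)R^2 w\bigr) = (1/2)R^2\bigl(-(1/2)\nabla^2 w\bigr) - (2k+1-2n)w
\end{align*}
does \emph{not} rearrange to $\nabla^2(R^2w)=2(2k+1-2n)w+R^2\nabla^2 w$; multiplying through by $-4$ gives the coefficient $4(2k+1-2n)$, which is off by a factor of two. The correct intermediate step is $f(ew)=e(fw)-\bigl(k+\tfrac{1-2n}{2}\bigr)w$, after which multiplying by $-4$ yields $4\bigl(k+\tfrac{1-2n}{2}\bigr)=2(2k+1-2n)$ as required, in agreement with the paper.

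The alternative route via $\U(\frak{sl}_2)$ has the same problem compounded by a sign error: the correct identity (with $[e,f]=h$, $[h,e]=2e$) is $[f,e^t]=-t\,e^{t-1}(h+t-1)$, not $t\,e^{t-1}\bigl(h-(t-1)\bigr)$. Using your stated identity together with your $h$ does not reproduce the lemma; with the corrected identity and $h=E+\tfrac{1-2n}{2}$ it does. None of this is a conceptual gap --- the strategy is the paper's, and once the Cartan normalization is fixed both your inductive argument and the non-inductive $\U(\frak{sl}_2)$ argument go through --- but as written the base-case computation is internally inconsistent and would propagate a factor of two through the induction.
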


\begin{proof}
	By induction on $t$, using $[\nabla^2,R^2]=4E+2(1-2n)$ we have
		\begin{align*}
			\nabla^2(R^2 w)&=(4k+2(1-2n))w+R^2\nabla^2w.\qedhere
		\end{align*}
\end{proof}

\begin{lemma}\label{CapRecursion}
	Let $\lambda\in \Lambda_n^*$ such that $\lambda_2\geq 1$ and set $\nu=(\lambda_1-1,\lambda_2-1)$. The Capelli operators $D_\lambda$ and $D_\nu$ are related by
		\begin{align*}
			2\lambda_2(2\lambda_1-2n-1)D_\lambda&=R^2 D_\nu \nabla^2.
		\end{align*}
\end{lemma}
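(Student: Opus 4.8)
The plan is to compute the action of both operators $D_\lambda$ and $R^2 D_\nu \nabla^2$ on each irreducible summand $V_\mu = R^{2\mu_2}\mathcal{H}_{\mu_1-\mu_2}$ of $\mathcal{P}(V)$ and check they agree up to the scalar $2\lambda_2(2\lambda_1-2n-1)$. Since both sides are $\g$-invariant differential operators, by \eqref{CapIso} and Schur's Lemma it suffices to verify the identity after applying both sides to a single nonzero vector in each $V_\mu$; equivalently, it suffices to show the two operators have the same eigenvalue on each $V_\mu$ for all $\mu \in \Lambda_n^*$.

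First I would recall that $D_\lambda$ is by definition the projection onto $V_\lambda$ (the identity on $V_\lambda$, zero on all other summands), so its eigenvalue on $V_\mu$ is $\delta_{\mu,\lambda}$. Next I would analyze the composite $R^2 D_\nu \nabla^2$ acting on $V_\mu = R^{2\mu_2}\mathcal{H}_{\mu_1-\mu_2}$. Write $k = \mu_1-\mu_2$ and $t = \mu_2$ so a general element is $R^{2t} h$ with $h \in \mathcal{H}_k$. By Lemma \ref{LapRRel} (the $w = h$ harmonic case, so $\nabla^2 h = 0$),
\begin{align*}
	\nabla^2(R^{2t}h) = 2t(2k+1-2n+2(t-1))R^{2(t-1)}h,
\end{align*}
which lands in $R^{2(t-1)}\mathcal{H}_k = V_{(\mu_1-1,\mu_2-1)}$ when $t \geq 1$, and is zero when $t = 0$. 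Then $D_\nu$ projects onto $V_\nu = V_{(\lambda_1-1,\lambda_2-1)}$, so this is nonzero only when $(\mu_1-1,\mu_2-1) = (\lambda_1-1,\lambda_2-1)$, i.e. $\mu = \lambda$; in that case $D_\nu$ acts as the identity and finally $R^2$ maps $R^{2(t-1)}h = R^{2(\lambda_2-1)}h$ back to $R^{2\lambda_2}h \in V_\lambda$. Tracking the scalar with $k = \lambda_1-\lambda_2$ and $t = \lambda_2$, the coefficient produced by Lemma \ref{LapRRel} is $2\lambda_2(2(\lambda_1-\lambda_2)+1-2n+2(\lambda_2-1)) = 2\lambda_2(2\lambda_1-2n-1)$. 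Hence $R^2 D_\nu \nabla^2$ acts on $V_\mu$ by $2\lambda_2(2\lambda_1-2n-1)\delta_{\mu,\lambda}$, which equals $2\lambda_2(2\lambda_1-2n-1)$ times the eigenvalue of $D_\lambda$ on $V_\mu$, proving the identity.

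The main point requiring care is that $R^2 D_\nu \nabla^2$ really does preserve the decomposition \eqref{Pdecomp} with the stated effect — in particular that $R^2$, $\nabla^2$, and $D_\nu$ are all $\g$-equivariant (true: $D_\nu \in \mathcal{PD}(V)^\g$ by construction, and $R^2$, $\nabla^2$ commute with $\osp(1|2n)$ as noted before Lemma \ref{HkIrred}, while their commutators with the Euler operator $E$ are multiples of themselves, so the whole composite is $\g$-invariant as an element of $\mathcal{PD}(V)^\g$), and that $\nabla^2$ maps $V_\mu$ into $V_{(\mu_1-1,\mu_2-1)}$ rather than spreading across several summands. The latter is exactly the content of the harmonic case of Lemma \ref{LapRRel}: on $R^{2t}\mathcal{H}_k$ the Laplacian acts by a single scalar times the shift $R^{2t}\mathcal{H}_k \to R^{2(t-1)}\mathcal{H}_k$, with no other components, because $\nabla^2$ annihilates $\mathcal{H}_k$. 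One should also note the edge cases: when $\mu_2 = 0$ the vector $V_\mu = \mathcal{H}_{\mu_1}$ is harmonic so $\nabla^2$ kills it and both sides vanish (consistent, since $\lambda_2 \geq 1$ forces $\lambda \neq \mu$ here), and when $\mu_1 = \lambda_1$, $\mu_2 = \lambda_2$ we must confirm $2\lambda_1 - 2n - 1 \neq 0$ is not needed — the identity holds as stated regardless, it is simply an equality of operators. This completes the proof.
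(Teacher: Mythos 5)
Your proposal rests on the claim that $D_\lambda$ ``is by definition the projection onto $V_\lambda$ (the identity on $V_\lambda$, zero on all other summands), so its eigenvalue on $V_\mu$ is $\delta_{\mu,\lambda}$,'' and similarly for $D_\nu$. This is not correct. By the isomorphism \eqref{CapIso}, $D_\lambda$ is the $\g$-invariant polynomial coefficient differential operator corresponding to $I_\lambda\in\Hom_\g(V_\lambda,V_\lambda)$, but that does not make it a projection onto the summand $V_\lambda$: the eigenvalue $c_\lambda(\mu)$ equals $\delta_{\lambda,\mu}$ only when $|\mu|\leq|\lambda|$ (this is precisely condition (3) of Lemma \ref{EigenProperties}), and is typically nonzero when $|\mu|>|\lambda|$. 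Indeed, computing these nontrivial eigenvalues is the entire content of Problem \ref{CapEvalProb} and Theorem \ref{CapEvalThm}; for a quick sanity check, $D_{(0,0)}$ is the identity operator, with eigenvalue $1$ on every $V_\mu$, not $\delta_{\mu,(0,0)}$. The source of the error is that the factor $v_i^*\in V_\nu^*\subset\mathcal{D}(V)$ appearing in $D_\nu=\sum_i (R^{2\nu_2}v_i)\,v_i^*$ acts on $\mathcal{P}(V)$ as a degree-$|\nu|$ constant coefficient differential operator, not as the duality pairing: applied to $q\in V_\mu$ with $|\mu|>|\nu|$ it returns a polynomial of degree $|\mu|-|\nu|>0$, not zero.

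Once the eigenvalue of $D_\nu$ is corrected, your computation shows that $R^2 D_\nu\nabla^2$ acts on $V_\mu$ by $2\mu_2(2\mu_1-2n-1)\,c_\nu\bigl((\mu_1-1,\mu_2-1)\bigr)$, and the Lemma would reduce to the identity $2\mu_2(2\mu_1-2n-1)\,c_\nu\bigl((\mu_1-1,\mu_2-1)\bigr)=2\lambda_2(2\lambda_1-2n-1)\,c_\lambda(\mu)$. But that identity is exactly Proposition \ref{CapEvalRecursive}, which the paper derives \emph{from} this Lemma, so the route via eigenvalues is circular. The paper's proof avoids this by working with the explicit rank-one decomposition $D_\nu=\sum_i (R^{2\nu_2}v_i)\,v_i^*$ and checking, using Lemma \ref{LapRRel}, that $\bigl\{v_i^*\nabla^2/\bigl(2\lambda_2(2\lambda_1-2n-1)\bigr)\bigr\}$ is the dual basis to $\bigl\{R^{2\lambda_2}v_i\bigr\}$ of $V_\lambda$; this identifies $R^2 D_\nu\nabla^2/\bigl(2\lambda_2(2\lambda_1-2n-1)\bigr)$ as $D_\lambda$ directly, without invoking any prior knowledge of the eigenvalues of $D_\nu$ on summands of higher degree.
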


\begin{proof}
Let $\lbrace R^{2\nu_2}v_1,\dotsc, R^{2\nu_2}v_\ell\rbrace$, with $v_i$ harmonic, be a basis to $V_\nu$ and let $\lbrace v_1^*,\dotsc, v_\ell^*\rbrace$ be its dual basis. It follows that $D_\nu=\sum_{i=1}^\ell \left(R^{2\nu_2}v_i\right)v_i^*.$ Furthermore, $\lbrace R^{2\lambda_2}v_1,\dotsc,R^{2\lambda_2}v_\ell\rbrace$ is a basis to $V_{\lambda}$. Notice that Lemma \ref{LapRRel} implies that for all $1\leq i,j\leq \ell$ we have
	\begin{align*}
		v_i^*\nabla^2(R^{2\lambda_2}v_j)&=v_i^*\left( 2\lambda_2(2\lambda_1-2n-1)R^{2\nu_2}v_j+R^{2\lambda_2}\nabla^2v_j \right)\\
		&=2\lambda_2(2\lambda_1-2n-1)\delta_{i,j}.
	\end{align*}
Thus $\left\lbrace \frac{v_i^*\nabla^2}{2\lambda_2(2\lambda_2-2n-1)}\mid 1\leq i\leq \ell\right\rbrace$ is dual to $\lbrace R^{2\lambda_2}v_1,\dotsc,R^{2\lambda_2}v_\ell\rbrace$ and
	\begin{align*}
		D_\lambda&=\sum_{i=1}^\ell \frac{(R^{2\lambda_2}v_i)( v_i^*\nabla^2)}{2\lambda_2(2\lambda_1-2n-1)}=\frac{R^2D_\nu\nabla^2}{2\lambda_2(2\lambda_1-2n-1)}.\qedhere
	\end{align*}
\end{proof}

\begin{prop}\label{CapEvalRecursive}
	Let $\lambda,\mu\in \Lambda_n^*$ be such that $\lambda_2,\mu_2\geq 1$. Set $\nu=(\lambda_1-1,\lambda_2-1)$ and $\eta=(\mu_1-1,\mu_2-1)$. Then
		\begin{align*}
			c_{\lambda}(\mu)&=\frac{\mu_2(\mu_1-(n+\frac{1}{2}))}{\lambda_1(\lambda_1-(n+\frac{1}{2}))}c_{\nu}(\eta).
		\end{align*}
\end{prop}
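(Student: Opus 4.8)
The plan is to use Lemma~\ref{CapRecursion}, which expresses the Capelli operator $D_\lambda$ in terms of $D_\nu$ via $2\lambda_2(2\lambda_1-2n-1)D_\lambda = R^2 D_\nu \nabla^2$. The strategy is to evaluate both sides of this operator identity on a suitable vector in the irreducible component $V_\mu$ and compare the resulting scalars. Since $D_\lambda$ acts on $V_\mu$ by the scalar $c_\lambda(\mu)$, the left-hand side acting on a nonzero $v \in V_\mu$ gives $2\lambda_2(2\lambda_1-2n-1)c_\lambda(\mu)v$. For the right-hand side, I would take $v$ of the form $R^{2\mu_2}w$ with $w \in \mathcal{H}_{\mu_1-\mu_2}$ harmonic (so that $V_\mu = R^{2\mu_2}\mathcal{H}_{\mu_1-\mu_2}$ and $V_\eta = R^{2(\mu_2-1)}\mathcal{H}_{\mu_1-\mu_2}$, noting $\mu_1-\mu_2 = \eta_1 - \eta_2$).

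The key computation is to track what $R^2 D_\nu \nabla^2$ does to $R^{2\mu_2}w$. First, apply Lemma~\ref{LapRRel} with $k = \mu_1-\mu_2$ and $t = \mu_2$ (using that $w$ is harmonic, so $\nabla^2 w = 0$) to get $\nabla^2(R^{2\mu_2}w) = 2\mu_2(2(\mu_1-\mu_2)+1-2n+2(\mu_2-1))R^{2(\mu_2-1)}w = 2\mu_2(2\mu_1 - 2n - 1)R^{2(\mu_2-1)}w$. This vector $R^{2(\mu_2-1)}w$ lies in $V_\eta$, so $D_\nu$ acts on it by the scalar $c_\nu(\eta)$. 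Finally, multiplication by $R^2$ carries it back to $R^{2\mu_2}w \in V_\mu$. Chaining these, the right-hand side equals $2\mu_2(2\mu_1-2n-1)c_\nu(\eta)\,R^{2\mu_2}w$. Equating with the left-hand side and cancelling $v = R^{2\mu_2}w \neq 0$ yields
\begin{align*}
	2\lambda_2(2\lambda_1-2n-1)c_\lambda(\mu) = 2\mu_2(2\mu_1-2n-1)c_\nu(\eta),
\end{align*}
and then solving for $c_\lambda(\mu)$ gives
\begin{align*}
	c_\lambda(\mu) = \frac{\mu_2(2\mu_1-2n-1)}{\lambda_2(2\lambda_1-2n-1)}c_\nu(\eta) = \frac{\mu_2(\mu_1-(n+\frac{1}{2}))}{\lambda_2(\lambda_1-(n+\frac{1}{2}))}c_\nu(\eta),
\end{align*}
after dividing numerator and denominator by $2$.

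One subtlety I would need to check is that the denominator $\lambda_2(2\lambda_1 - 2n - 1)$ does not vanish, i.e. that Lemma~\ref{CapRecursion} is genuinely giving a scaling relation rather than a degenerate one; since $\lambda_2 \geq 1$ and $2\lambda_1 - 2n - 1$ is an odd integer hence nonzero, this is automatic. The main (and really only) obstacle is purely bookkeeping: one must be careful that the indices line up correctly — that $R^{2(\mu_2-1)}\mathcal{H}_{\mu_1-\mu_2}$ is indeed the component $V_\eta$ associated to $\eta = (\mu_1-1,\mu_2-1)$, which requires $\eta \in \Lambda_n^*$; this follows from $\mu \in \Lambda_n^*$ and $\mu_2 \geq 1$ by inspecting the definition of $\mathbb{P}_{k,n}^*$. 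I note that the statement of Proposition~\ref{CapEvalRecursive} has $\lambda_1(\lambda_1-(n+\frac12))$ in the denominator rather than $\lambda_2(\lambda_1-(n+\frac12))$; reconciling this would require either an additional identity relating the two or a typo correction, and I would verify which is intended by cross-checking against the explicit formula in Theorem~\ref{CapEvalThm}.
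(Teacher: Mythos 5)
Your proof is correct and follows the same route as the paper's: evaluate the operator identity of Lemma~\ref{CapRecursion} on a vector $R^{2\mu_2}w\in V_\mu$ with $w$ harmonic, use Lemma~\ref{LapRRel} to compute $\nabla^2(R^{2\mu_2}w)=2\mu_2(2\mu_1-2n-1)R^{2(\mu_2-1)}w$, note that $R^{2(\mu_2-1)}w\in V_\eta$, and cancel the nonzero scalars. Your suspicion about the stated denominator is also correct: the paper's own proof arrives at $\lambda_2(\lambda_1-(n+\tfrac12))$, and Corollary~\ref{RecurEvalEq}, obtained by iterating the proposition $\lambda_2$ times, has $\lambda_2!$ in the denominator, as does the factor $\nu_2!$ in Theorem~\ref{CapEvalThm}; so the $\lambda_1$ appearing in the proposition's statement is a typo for $\lambda_2$, and your derivation gives the intended result.
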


\begin{proof}
	Take any $R^{2\mu_2}w\in V_\mu$, with $w$ harmonic, then using Lemma \ref{LapRRel} and Lemma \ref{CapRecursion} we have
		\begin{align*}
			c_{\lambda}(\mu)R^{2\mu_2}w&=D_\lambda (R^{2\mu_2}w)=\frac{R^2D_\nu\nabla^2}{2\lambda_2(2\lambda_1-2n-1)}\left( R^{2\mu_2}w \right)\\
			&=\frac{\mu_2(2\mu_1-2n-1)}{\lambda_2(2\lambda_1-2n-1)}R^2D_\nu R^{2\eta_2}w=\frac{\mu_2(\mu_1-(n+\frac{1}{2}))}{\lambda_2(\lambda_1-(n+\frac{1}{2}))}c_{\nu}(\eta)R^{2\mu_2}w.\qedhere
		\end{align*}
\end{proof}

	Indeed, applying Proposition \ref{CapEvalRecursive} recursively relates the eigenvalues of $D_\lambda$ to those of $D_\nu$, where $V_\nu$ is harmonic. That is, this reduces Problem \ref{CapEvalProb} to calculating the eigenvalues of $D_\nu$, for $\nu$ such that $V_\nu$ harmonic.

\begin{corollary}\label{RecurEvalEq}
	Let $\lambda,\mu\in \Lambda^*_n$. Let $\ell=\min(\lambda_2,\mu_2)$ and set $\nu=(\lambda_1-\ell,\lambda_2-\ell)$ and $\eta=(\mu_1-\ell,\mu_2-\ell)$. Then
		\begin{align}\label{RecEvalEquation}
			c_\lambda(\mu)&=\frac{\mu_2^{\underline{\lambda_2}}(\mu_1-(n+\frac{1}{2}))^{\underline{\lambda_2}}}{\lambda_2!(\lambda_1-(n+\frac{1}{2}))^{\underline{\lambda_2}}} c_\nu(\eta).
		\end{align}
\end{corollary}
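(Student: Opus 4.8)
The plan is to prove Corollary~\ref{RecurEvalEq} by iterating Proposition~\ref{CapEvalRecursive} exactly $\ell = \min(\lambda_2,\mu_2)$ times. First I would observe that the pairs $(\lambda,\mu)$ and $(\nu,\eta)$ are linked by simultaneously subtracting $(1,1)$ from both indices, which is precisely the operation appearing in Proposition~\ref{CapEvalRecursive}: if we set $\lambda^{(0)}=\lambda$, $\mu^{(0)}=\mu$, and $\lambda^{(t+1)}=(\lambda^{(t)}_1-1,\lambda^{(t)}_2-1)$, $\mu^{(t+1)}=(\mu^{(t)}_1-1,\mu^{(t)}_2-1)$, then after $\ell$ steps we reach $\lambda^{(\ell)}=\nu$ and $\mu^{(\ell)}=\eta$. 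The only thing to check before applying the proposition at step $t$ is that both $\lambda^{(t)}_2\geq 1$ and $\mu^{(t)}_2\geq 1$, which holds for $0\leq t\leq \ell-1$ since $\lambda^{(t)}_2=\lambda_2-t$ and $\mu^{(t)}_2=\mu_2-t$ and $t\leq \ell-1 < \ell \leq \lambda_2,\mu_2$; one should also note all intermediate indices remain in $\Lambda_n^*$, which is clear because $V_{\lambda^{(t)}}=R^{-2t}V_\lambda$ in the obvious sense, i.e. $\lambda^{(t)}$ corresponds to $R^{2(\lambda_2-t)}\mathcal H_{\lambda_1-\lambda_2}$, still a genuine summand.

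Next I would apply Proposition~\ref{CapEvalRecursive} repeatedly to telescope:
\begin{align*}
	c_\lambda(\mu) = c_{\lambda^{(0)}}(\mu^{(0)}) = \prod_{t=0}^{\ell-1}\frac{\mu^{(t)}_2\bigl(\mu^{(t)}_1-(n+\tfrac12)\bigr)}{\lambda^{(t)}_2\bigl(\lambda^{(t)}_1-(n+\tfrac12)\bigr)}\cdot c_{\lambda^{(\ell)}}(\mu^{(\ell)}) = \prod_{t=0}^{\ell-1}\frac{(\mu_2-t)\bigl(\mu_1-t-(n+\tfrac12)\bigr)}{(\lambda_2-t)\bigl(\lambda_1-t-(n+\tfrac12)\bigr)}\cdot c_\nu(\eta).
\end{align*}
Then I would recognize each of the four resulting products as a falling factorial: $\prod_{t=0}^{\ell-1}(\mu_2-t)=\mu_2^{\underline{\ell}}$, $\prod_{t=0}^{\ell-1}(\lambda_2-t)=\lambda_2^{\underline{\ell}}$, $\prod_{t=0}^{\ell-1}\bigl(\mu_1-(n+\tfrac12)-t\bigr)=(\mu_1-(n+\tfrac12))^{\underline{\ell}}$, and likewise $\prod_{t=0}^{\ell-1}\bigl(\lambda_1-(n+\tfrac12)-t\bigr)=(\lambda_1-(n+\tfrac12))^{\underline{\ell}}$, using the notation $x^{\underline i}=x(x-1)\dotsb(x-i+1)$ from the statement of Theorem~\ref{CapEvalThm}.

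Finally I would reconcile the exponent $\ell$ with the exponent $\lambda_2$ appearing in the claimed formula~\eqref{RecEvalEquation}. If $\ell=\lambda_2$ (i.e. $\lambda_2\leq\mu_2$) there is nothing to do. If $\ell=\mu_2<\lambda_2$, then $\eta_2=0$, so $V_\eta$ is harmonic; in the factors with exponent bumped from $\ell$ to $\lambda_2$ the extra terms in the numerator $\mu_2^{\underline{\lambda_2}}$ are $(\mu_2-\mu_2)(\mu_2-\mu_2-1)\dotsb=0$, i.e. $\mu_2^{\underline{\lambda_2}}=0$ whenever $\lambda_2>\mu_2$, so both sides of~\eqref{RecEvalEquation} vanish and the identity holds trivially — one checks that $c_\lambda(\mu)=0$ in this range, which follows because $D_\lambda$ has order $|\lambda| > |\mu|$ on $\mathcal P^{|\mu|}(V)$, or more directly because $\mu_2<\lambda_2$ forces $D_\lambda$ (built from $R^{2\lambda_2}$ and $\nabla^2$ by Lemma~\ref{CapRecursion}) to kill $V_\mu$. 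I do not expect a genuine obstacle here; the only subtlety is the bookkeeping in this degenerate case $\lambda_2>\mu_2$, and handling it cleanly by noting that both $c_\lambda(\mu)$ and the right-hand side of~\eqref{RecEvalEquation} are zero is the one step that requires a little care rather than pure computation.
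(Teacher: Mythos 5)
Your approach is essentially the same as the paper's: iterate Proposition~\ref{CapEvalRecursive} to telescope the product and recognize the falling factorials, then handle the degenerate case $\mu_2<\lambda_2$ by showing both sides vanish.

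The one thing to flag is in the degenerate case. Your first stated justification for $c_\lambda(\mu)=0$ when $\mu_2<\lambda_2$ — that ``$D_\lambda$ has order $|\lambda|>|\mu|$ on $\mathcal P^{|\mu|}(V)$'' — rests on a false implication: $\mu_2<\lambda_2$ does not imply $|\mu|<|\lambda|$ (e.g.\ $\lambda=(3,2)$, $\mu=(10,0)$). The degree-comparison argument only handles the subcase $|\mu|<|\nu|$, which is what the paper invokes in Lemma~\ref{EigenProperties}, not here. Your second, ``more direct'' argument is the right one and is exactly what the paper does: apply Lemma~\ref{CapRecursion} $\lambda_2$ times to write $D_\lambda$ (up to a nonzero constant) as $R^{2\lambda_2}D_{\nu'}\nabla^{2\lambda_2}$ with $\nu'$ harmonic, and then iterate Lemma~\ref{LapRRel} on $R^{2\mu_2}w$ ($w$ harmonic) to see $\nabla^{2\lambda_2}R^{2\mu_2}w=0$ because $\lambda_2\geq\mu_2+1$. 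Commit to that argument and drop the first; otherwise your write-up matches the paper.
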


\begin{proof}
	If $\mu_2<\lambda_2$, then the right hand side of Equation \eqref{RecEvalEquation} is $0$. Furthermore, Lemma \ref{CapRecursion} can be applied $\lambda_2$ times to write $d_{\lambda,\mu}D_\lambda=R^{2\lambda_2}D_\nu\nabla^{2\lambda_2},$ for some non-zero $d_{\lambda,\mu}\in \Z$. Thus for all $R^{2\mu_2}w\in V_\mu$, with $w$ harmonic, we have the following
		\begin{align*}
			c_\lambda(\mu)R^{2\mu_2}w&=D_\lambda R^{2\mu_2}w\\
			&=\frac{1}{d_{\lambda,\mu}}R^{2\lambda_2}D_\nu \nabla^{2\lambda_2}R^{2\mu_2}w=0,
		\end{align*}
	where the last equality follows by applying Lemma \ref{LapRRel} $\mu_2+1$ times. Thus for $\mu_2<\lambda_2$ we have Equation \eqref{RecEvalEquation}.
	
	If $\lambda_2\leq \mu_2$, then Lemma \ref{CapEvalRecursive} can be applied $\ell$-times to get
		\begin{align*}
			c_\lambda(\mu)&=\left( \prod_{i=1}^{\lambda_2}\frac{(\mu_2-i+1)(2(\mu_1-i)+1-2n)}{(\lambda_2-i+1)(2(\lambda_1-i)+1-2n)} \right)c_\nu(\eta).\qedhere
		\end{align*}
\end{proof}

Thus we focus on computing the eigenvalues of $D_\nu$, for $V_\nu$ harmonic. The set of irreducible harmonic submodules of $\mathcal{P}(V)$ are indexed by $$\Lambda_{n,\mathcal{H}}^*:=\lbrace (i,0)\mid 0\leq i\leq 2n+1 \rbrace.$$ The following lemma will allow us to relate the eigenvalues of $D_\nu$, for $\nu\in \Lambda_{n,\mathcal{H}}^*$, to Knop-Sahi polynomials $P_\nu^\rho$, see \cite{KnoSah96}.

\begin{lemma}\label{EigenProperties}
	Let $\nu \in \Lambda_{n,\mathcal{H}}^*$ and $\mu\in \Lambda_n^*$. Then $c_\nu(\mu)$ is a polynomial in $\mu_1,\mu_2$ such that
	\begin{enumerate}[(1)]
		\item $c_\nu(\mu)$ is symmetric in $x=\mu_1-\left(n+\frac{1}{2}\right)$ and $y=\mu_2$,
		\item $\deg(c_\nu(\mu))\leq |\nu|,$ and
		\item $c_\nu(\mu)=\delta_{\nu,\mu}$ for all $|\mu|\leq |\nu|$.
	\end{enumerate}
\end{lemma}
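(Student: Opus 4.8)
The plan is to establish the three properties by combining Lemma \ref{CapRecursion} (the recursion $2\lambda_2(2\lambda_1-2n-1)D_\lambda = R^2 D_\nu \nabla^2$) with direct information about how $D_\nu$ acts on the summands $V_\mu$ when $\nu \in \Lambda^*_{n,\mathcal{H}}$, i.e.\ $\nu = (i,0)$ with $0 \le i \le 2n+1$ and $V_\nu = \mathcal{H}_i$ a harmonic module. The key observation is that for such $\nu$, the Capelli operator $D_\nu$ is a constant-coefficient differential operator of order $|\nu| = i$ (since it sits in $\mathrm{Hom}_\g(V_\nu, V_\nu)$ with $V_\nu \subseteq \mathcal{H}_i \subseteq \mathcal{P}^i(V)$ harmonic and $V_\nu^* \subseteq \mathcal{D}^i(V)$), so it lowers degree by exactly $i$.

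First I would prove property (3). Given $\mu \in \Lambda^*_n$ with $|\mu| \le |\nu|$: if $|\mu| < i$ then $D_\nu$ kills $V_\mu$ for degree reasons, so $c_\nu(\mu) = 0$. If $|\mu| = i$, then $D_\nu$ preserves $\mathcal{P}^i(V) = \bigoplus_{\mu' \in \PP^*_{i,n}} V_{\mu'}$ and by construction of the Capelli basis via \eqref{CapIso} it acts as the identity on $V_\nu$ and as zero on every other summand $V_{\mu'}$ with $\mu' \ne \nu$; hence $c_\nu(\mu) = \delta_{\nu,\mu}$. This gives (3), and in particular the normalization $c_\nu(\nu) = 1$.

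Next, properties (1) and (2): the polynomiality, degree bound, and symmetry of $\mu \mapsto c_\nu(\mu)$. The natural route is a generating-function / explicit-computation argument. One computes $D_\nu$ acting on a generic element $R^{2\mu_2} w \in V_\mu$ with $w \in \mathcal{H}_{\mu_1 - \mu_2}$ harmonic. Because $D_\nu$ is constant-coefficient of order $i = \nu_1$, we can write $D_\nu$ as a polynomial in $\nabla^2$ and the ``angular'' harmonic operators; but acting on $R^{2\mu_2}w$, repeated application of Lemma \ref{LapRRel} shows that $\nabla^{2t}(R^{2\mu_2}w) = c(t;\mu_1,\mu_2)\,R^{2(\mu_2 - t)}w$ where $c(t;\mu_1,\mu_2)$ is an explicit product of linear factors in $\mu_1$ and $\mu_2$ — concretely a product of terms of the form $2\mu_2(2(\mu_1-\mu_2) + 1 - 2n + 2(\mu_2 - 1)) = 2\mu_2(2\mu_1 - 2n - 1)$ shifted appropriately, which is visibly a polynomial in $\mu_1, \mu_2$ of the right degree. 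Pairing against the dual harmonic vector and tracking the scalar produced yields that $c_\nu(\mu)$ is a polynomial in $\mu_1,\mu_2$ of degree $\le |\nu|$. For the symmetry in $x = \mu_1 - (n+\tfrac12)$ and $y = \mu_2$, the cleanest approach is to invoke the $\frak{sl}_2(\C) \times \osp(1|2n)$ bimodule structure on $\mathcal{P}(V)$: the eigenvalue $c_\nu(\mu)$ depends on $\mu$ only through the pair of Casimir-type invariants (the $\osp$-Casimir eigenvalue, governed by $\mu_1 - \mu_2$ via $\ell_\nu$, and the $\frak{sl}_2$-weight, governed by $\mu_2$), and the $x \leftrightarrow y$ symmetry reflects an interchange symmetry already built into the Knop-Sahi setup with $\rho = (-n-\tfrac12, 0)$. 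Alternatively, one derives (1) by an explicit rewriting of the polynomial obtained above and checking invariance under $(x,y) \mapsto (y,x)$ directly.

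The main obstacle will be property (1), the symmetry: the degree bound and polynomiality fall out mechanically from Lemma \ref{LapRRel} and the order-$|\nu|$ structure of $D_\nu$, and (3) is essentially a definitional unwinding of \eqref{CapIso}. But the hidden $x \leftrightarrow y$ symmetry is not at all apparent from the $R^2/\nabla^2$ computation, which treats the ``radial'' variable $\mu_2$ and the ``harmonic degree'' $\mu_1 - \mu_2$ asymmetrically; making it manifest seems to require either a genuinely different description of $D_\nu$ (e.g.\ via its preimage in $\mathcal{Z}(\g)$ and the Harish-Chandra homomorphism, which is where $\rho$ enters), or a somewhat delicate manipulation of the explicit polynomial expression for $c_\nu(\mu)$ to recast it in the symmetric form anticipated by the Knop-Sahi polynomial $P_\nu^\rho$.
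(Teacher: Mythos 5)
The central gap is that your proposal does not actually establish property (1), the $x \leftrightarrow y$ symmetry. You observe correctly that the $R^2/\nabla^2$ computation treats $\mu_2$ and $\mu_1 - \mu_2$ asymmetrically and that the symmetry is ``not at all apparent'' from it, and you gesture at two alternatives --- a bimodule/Casimir argument, or passing to a preimage in $\mathcal{Z}(\g)$ --- but carry out neither. The second alternative is precisely the paper's proof, and it is worth spelling out how cleanly it closes the argument: Theorem \ref{MainIntroTheorem}, proved just before this lemma, lets you write $D_\nu = \sum_{\lambda \in \Lambda_{|\nu|,n}} a_\lambda\, C^{\lambda_2} Z^{\lambda_1 - \lambda_2}$ (identifying $C,Z$ with their images under \eqref{AbsCapProb}). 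The eigenvalues of $C$ and $Z$ on $V_\mu$ are explicit: $Z$ acts by $\mu_1 + \mu_2 = x + y + n + \tfrac12$, and $C$ acts by $(\mu_1-\mu_2)(2n+1 - \mu_1 + \mu_2) = (n+\tfrac12)^2 - (x-y)^2$. Both expressions are manifestly symmetric under $x \leftrightarrow y$, so each summand $a_\lambda\bigl((n+\tfrac12)^2 - (x-y)^2\bigr)^{\lambda_2}\bigl(x + y + n + \tfrac12\bigr)^{\lambda_1 - \lambda_2}$ of $c_\nu(\mu)$ is symmetric and has degree $|\lambda| \le |\nu|$. This delivers polynomiality, the degree bound (2), and the symmetry (1) in one stroke, avoiding the $R^2/\nabla^2$ bookkeeping entirely. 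In short, the hinge of the paper's argument is that Theorem \ref{MainIntroTheorem} was proved for exactly this purpose, and you leave that hinge unused.

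There is also a factual error at the start: $D_\nu$ for $\nu = (i,0)$ is \emph{not} a constant-coefficient differential operator. It lies in $\mathcal{PD}^{i}(V) \cong \mathcal{P}^i(V) \otimes \mathcal{D}^i(V)$, i.e.\ it has polynomial coefficients of degree $i$ and differential order $i$, and it \emph{preserves} polynomial degree (acting by a scalar on each $V_\mu$) rather than ``lowering degree by exactly $i$.'' You in fact contradict yourself two sentences later when you say $D_\nu$ preserves $\mathcal{P}^i(V)$. The argument for (3) only uses that the differential order is $|\nu|$ --- which is true --- so the conclusion for (3) survives, but the constant-coefficient claim also underlies your attempted route to (2) (``we can write $D_\nu$ as a polynomial in $\nabla^2$'') and needs to be discarded. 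Your treatment of (3) otherwise matches the paper: the order-vs-degree argument for $|\mu| < |\nu|$ and the Schur/support argument via \eqref{CapIso} for $|\mu| = |\nu|$.
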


\begin{proof}
	Identify $C$ and $Z$ with their images under the map in Equation \eqref{AbsCapProb}. Theorem \ref{MainIntroTheorem} lets us write $D_\nu=\sum_{\lambda\in \Lambda_{|\nu|,n}} a_\lambda C^{\lambda_2}Z^{\lambda_1-\lambda_2}$ for some $a_\lambda\in \C$. Observe that the action of $C$ and $Z$ on $V_\mu$ are given by scalar multiplication by $(\mu_1-\mu_2)(2n+1+\mu_2-\mu_1)$ and $\mu_1+\mu_2$, respectively. Indeed, the action of $D_\nu$ on $V_\mu$ is given by
		\begin{align}\label{EvalAsPoly}
			c_\nu(\mu)&=\sum_{\lambda\in \Lambda_{|\nu|,n}} a_\lambda(\mu_1-\mu_2)^{\lambda_2}(2n+1+\mu_2-\mu_1)^{\lambda_2}(\mu_1+\mu_2)^{\lambda_1-\lambda_2}\nonumber\\
			&=\sum_{\lambda\in \Lambda_{|\nu|,n}}a_{\lambda}\left(x-y+n+\frac{1}{2}\right)^{\lambda_2}\left(y-x+n+\frac{1}{2}\right)^{\lambda_2}\left(x+y-n-\frac{1}{n}\right)^{\lambda_1-\lambda_2},
			%&=\widetilde{c}_\nu(x,y),\nonumber
		\end{align}
	where $x=\mu_1-n-\frac{1}{2}$ and $y=\mu_2$. Condition (1) follows from Equation \eqref{EvalAsPoly}. Furthermore, every summand of \eqref{EvalAsPoly} has degree $|\lambda|$. This yields Condition (2). For Condition (3), the argument is similar to \cite[Lemma 5.4]{SalSah16} the point being $\bigcup_{k=0}^{2n+1} \mathbb{P}_{k,n}^*=\lbrace \mu\in \mathbb{P}\colon |\mu|\leq 2n+1\rbrace$. Indeed, if $|\mu|<|\nu|$ then the order of $D_\nu$ is $|\nu|$ and the degree of $V_\mu$ is $|\mu|$. That is, $c_\nu(\mu)=0$. If $|\nu|=|\mu|$, then $D_\nu$ results in a $\g$-module homomorphism $V_\mu\to V_\nu$ which can be non-zero only if $\mu=\nu$.
\end{proof}

The next proposition which follows from \cite[Prop. 3.4]{KnoSah96} gives an explicit expression for $P_\nu^\rho$ for $\nu \in \Lambda_{n,\mathcal{H}}^*$.

\begin{prop}
	Let $\nu\in \Lambda_{n,\mathcal{H}}^*$. Then
		\begin{align*}
			P_\nu^{\rho}&={n+\frac{1}{2}\choose \nu_1}^{-1}\sum_{i=0}^{\nu_1}{n+\frac{1}{2}\choose \nu_1-i}{n+\frac{1}{2}\choose i}\left(x+n+\frac{1}{2}-i\right)^{\underline{\nu_1-i}}\left(y\right)^{\underline{i}}.
		\end{align*}
\end{prop}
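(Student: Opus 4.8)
The statement to prove is an explicit closed formula for the Knop–Sahi polynomial $P_\nu^\rho$ with $\rho=(-n-\tfrac12,0)$ in the special case $\nu=(\nu_1,0)\in\Lambda_{n,\mathcal H}^*$. The natural strategy is to invoke the defining characterization of $P_\nu^\rho$ recalled just before Theorem~\ref{CapEvalThm}: a symmetric polynomial in $x,y$ of degree $\le|\nu|=\nu_1$, vanishing at $\mu+\rho$ for all $\mu\in\mathbb P$ with $|\mu|\le\nu_1$, $\mu\ne\nu$, and taking the prescribed value at $\mu=\nu$. By uniqueness it suffices to check that the right-hand side candidate — call it $Q(x,y)$ — satisfies all four conditions. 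So the plan is: (1) observe symmetry; (2) bound the degree; (3) verify the vanishing at the interpolation nodes; (4) compute $Q$ at $\nu+\rho$ and match the normalization $(\nu_1)!\,(\nu_1-(n+\tfrac12))^{\underline 0}=\nu_1!$ (note $\nu_2=0$, so conditions (4) reduces to $P_\nu^\rho(\nu+\rho)=\nu_1!$).

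**Carrying it out.** For (1), symmetry in $x$ and $y$ is not visible termwise, so I would not prove it directly from the displayed sum; instead I would note that $P_\nu^\rho$ is \emph{defined} to be symmetric and that what the proposition asserts is an identity of polynomials, so it is enough to check the remaining conditions on the candidate and then appeal to uniqueness — alternatively, cite \cite[Prop.~3.4]{KnoSah96} directly, since the claim is precisely the specialization of that result with the relevant shift parameters. For (2): each summand contributes a falling factorial $(x+n+\tfrac12-i)^{\underline{\nu_1-i}}$ of degree $\nu_1-i$ in $x$ times $(y)^{\underline i}$ of degree $i$ in $y$, so every term has total degree $\nu_1=|\nu|$; hence $\deg Q\le|\nu|$. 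For (3), the key computation: evaluate $Q$ at $(x,y)=(\mu_1-n-\tfrac12,\ \mu_2)$ with $|\mu|=\mu_1+\mu_2\le\nu_1$ and $\mu\ne(\nu_1,0)$. Then $(y)^{\underline i}=\mu_2^{\underline i}$ vanishes for $i>\mu_2$, and $(x+n+\tfrac12-i)^{\underline{\nu_1-i}}=(\mu_1-i)^{\underline{\nu_1-i}}$ vanishes whenever $0\le\mu_1-i<\nu_1-i$, i.e. whenever $\mu_1<\nu_1$ and $i\le\mu_1$ (more precisely the falling factorial $(\mu_1-i)(\mu_1-i-1)\cdots(\mu_1-\nu_1+1)$ passes through $0$ once $\mu_1-i\ge 0$ and $\mu_1-\nu_1+1\le 0$). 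Combining the two vanishing ranges — $i\le\mu_2$ forced by the first factor, $i\ge\mu_1+1$ (when $\mu_1<\nu_1$) forced by the second — one gets an empty index set precisely because $\mu_2\le\nu_1-\mu_1$ with equality excluded unless $\mu=(\nu_1,0)$; the boundary case $\mu_1+\mu_2=\nu_1$ with $\mu_2\ge1$ needs a short separate check that the single surviving term also vanishes. This dovetailing of the two falling-factorial vanishing conditions is the crux.

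**The normalization and the main obstacle.** For (4), set $\mu=\nu=(\nu_1,0)$, so $x=\nu_1-n-\tfrac12$, $y=0$. Then $(y)^{\underline i}=0$ for $i\ge1$, leaving only $i=0$: the sum collapses to ${n+\tfrac12\choose\nu_1}{n+\tfrac12\choose 0}(x+n+\tfrac12)^{\underline{\nu_1}}=\binom{n+\frac12}{\nu_1}(\nu_1)^{\underline{\nu_1}}=\binom{n+\frac12}{\nu_1}\,\nu_1!$, and the prefactor $\binom{n+\frac12}{\nu_1}^{-1}$ cancels to give $\nu_1!$, matching condition (4) since $\nu_2=0$. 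I expect the main obstacle to be item (3): getting the index bookkeeping exactly right at the boundary $|\mu|=|\nu|$, and confirming that in every sub-case at least one of the two falling factorials in each surviving summand genuinely hits zero (this uses that $x+n+\tfrac12=\mu_1$ is a nonnegative integer, so $(\mu_1-i)^{\underline{\nu_1-i}}$ really does vanish rather than merely being a nonzero rational). Once the vanishing is pinned down, uniqueness of the Knop–Sahi polynomial closes the argument; alternatively the whole proposition can be deduced formally from \cite[Prop.~3.4]{KnoSah96} by matching parameters, which is the route the paper's phrasing ("follows from") suggests, and I would present that as the primary proof with the direct verification relegated to a remark.
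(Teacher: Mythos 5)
Your primary route---deducing the proposition from \cite[Prop.~3.4]{KnoSah96} by matching parameters---is exactly the paper's argument; the paper gives no proof beyond that citation, so on the main point you and the paper coincide.

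The supplementary direct verification you sketch does, however, contain a genuine gap. You propose to skip verifying symmetry of the candidate $Q$ and instead ``check the remaining conditions and appeal to uniqueness.'' That does not close: the uniqueness that characterizes $P_\nu^\rho$ holds within the space of \emph{symmetric} polynomials of degree $\le|\nu|$, whose dimension exactly equals the number of interpolation conditions (2)--(4). A polynomial satisfying only (2)--(4) need not equal $P_\nu^\rho$ unless one also establishes $Q(x,y)=Q(y,x)$, and---as you correctly observe---that identity is not visible termwise in the displayed sum. So the direct verification as written is incomplete; either the symmetry must be proved separately, or one falls back on \cite[Prop.~3.4]{KnoSah96}, which is what the paper does. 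A smaller slip: in your treatment of condition (3), the surviving indices would have to satisfy $i\le\mu_2$ and $i>\mu_1$, which is impossible because of the partition constraint $\mu_2\le\mu_1$ on $\mu\in\mathbb{P}$---not because of $\mu_2\le\nu_1-\mu_1$ as you state---and once this is used there is no separate boundary case to check.
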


Lemma \ref{EigenProperties} and the uniqueness the Knop-Sahi polynomials yield the following.

\begin{prop}\label{CapHarmFormula}
	Let $\nu\in \Lambda_{n,\mathcal{H}}^*$ and $\mu\in \Lambda_n^*$. Then
		\begin{align*}
			c_\nu(\mu)&=\frac{1}{(n+\frac{1}{2})^{\underline{\nu_1}}}\sum_{i=0}^{\nu_1}{n+\frac{1}{2}\choose \nu_1-i}{n+\frac{1}{2}\choose i}\left(\mu_1-i\right)^{\underline{\nu_1-i}}\left(\mu_2\right)^{\underline{i}}.
		\end{align*}
\end{prop}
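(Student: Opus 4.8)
The plan is to deduce Proposition \ref{CapHarmFormula} directly from the preceding Proposition giving the explicit formula for $P_\nu^\rho$ together with Lemma \ref{EigenProperties} and the uniqueness characterization of the Knop-Sahi polynomials. First I would observe that Lemma \ref{EigenProperties} tells us that, for $\nu\in\Lambda_{n,\mathcal{H}}^*$, the function $\mu\mapsto c_\nu(\mu)$, viewed as a polynomial in $x=\mu_1-(n+\tfrac12)$ and $y=\mu_2$, is symmetric in $x$ and $y$, has degree at most $|\nu|=\nu_1$ (since $\nu_2=0$), and satisfies $c_\nu(\mu)=\delta_{\nu,\mu}$ for all $\mu$ with $|\mu|\le|\nu|$. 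These are precisely conditions (1), (2), (3) in the defining list of $P_\nu^\rho$ with $\rho=(-n-\tfrac12,0)$; the only discrepancy is the normalization in condition (4). Hence by uniqueness of the Knop-Sahi polynomial, $c_\nu(\mu)$ must equal a fixed scalar multiple of $P_\nu^\rho$ evaluated at the point $(\mu_1-n-\tfrac12,\mu_2)$, where the scalar is determined by comparing the value at $\mu=\nu$.

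The second step is the bookkeeping of that scalar. Since $c_\nu(\nu)=1$ by condition (3) of Lemma \ref{EigenProperties}, while $P_\nu^\rho(\nu+\rho)=(\nu_1-\nu_2)!(\nu_2!)(\nu_1-(n+\tfrac12))^{\underline{\nu_2}}$ by condition (4); with $\nu_2=0$ this reduces to $P_\nu^\rho(\nu+\rho)=\nu_1!$. Therefore $c_\nu(\mu)=\tfrac{1}{\nu_1!}P_\nu^\rho(\mu_1-n-\tfrac12,\mu_2)$. Then I would substitute the explicit formula from the preceding Proposition: writing $x=\mu_1-n-\tfrac12$ and $y=\mu_2$, we have
\[
P_\nu^\rho(x,y)={n+\tfrac12\choose\nu_1}^{-1}\sum_{i=0}^{\nu_1}{n+\tfrac12\choose\nu_1-i}{n+\tfrac12\choose i}\left(x+n+\tfrac12-i\right)^{\underline{\nu_1-i}}(y)^{\underline{i}},
\]
and note $x+n+\tfrac12-i=\mu_1-i$, so the falling-factorial argument becomes $(\mu_1-i)^{\underline{\nu_1-i}}$ and the last factor becomes $(\mu_2)^{\underline{i}}$. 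Dividing by $\nu_1!$ and using the elementary identity ${n+\frac12\choose\nu_1}^{-1}\cdot\tfrac{1}{\nu_1!}=\tfrac{1}{(n+\frac12)^{\underline{\nu_1}}}$ (which is just the definition of the generalized binomial coefficient, $(n+\tfrac12)^{\underline{\nu_1}}=\nu_1!{n+\frac12\choose\nu_1}$) yields exactly the claimed expression.

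The one genuine point requiring care — the main obstacle, though it is minor — is verifying that condition (3) of Lemma \ref{EigenProperties}, namely $c_\nu(\mu)=\delta_{\nu,\mu}$ for $|\mu|\le|\nu|$, really does supply enough vanishing to invoke uniqueness of $P_\nu^\rho$, i.e.\ that $\lbrace\mu\in\mathbb{P}:|\mu|\le|\nu|\rbrace$ is contained in the set over which the Knop-Sahi vanishing condition (3) is imposed. Since $\nu\in\Lambda_{n,\mathcal{H}}^*$ forces $|\nu|=\nu_1\le 2n+1$, the identity $\bigcup_{k=0}^{2n+1}\mathbb{P}_{k,n}^*=\lbrace\mu\in\mathbb{P}:|\mu|\le 2n+1\rbrace$ noted in the proof of Lemma \ref{EigenProperties} guarantees this, so all $\mu\in\mathbb{P}$ with $|\mu|\le|\nu|$ actually arise as indices of irreducible components and the vanishing statement is nonvacuous and complete. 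Once that is in hand, the proof is just the assembly described above; I would present it as: invoke Lemma \ref{EigenProperties}, match against the four defining conditions of $P_\nu^\rho$, conclude $c_\nu=\tfrac{1}{\nu_1!}P_\nu^\rho$ by uniqueness, substitute the explicit formula for $P_\nu^\rho$, and simplify.
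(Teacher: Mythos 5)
Your proposal is correct and matches the paper's approach exactly: the paper simply cites Lemma \ref{EigenProperties} and the uniqueness of the Knop-Sahi polynomials without spelling out the details, and your writeup supplies precisely the intended bookkeeping — matching conditions (1)--(3), computing the normalizing scalar $\nu_1!$ from condition (4) with $\nu_2=0$, and simplifying $\frac{1}{\nu_1!}\binom{n+\frac12}{\nu_1}^{-1}=\frac{1}{(n+\frac12)^{\underline{\nu_1}}}$. Your remark that the vanishing set $\{\mu\in\mathbb{P}:|\mu|\le|\nu|\}$ is fully covered because $|\nu|\le 2n+1$ forces $\mathbb{P}_{k,n}^*=\{\mu\in\mathbb{P}:|\mu|=k\}$ for all $k\le|\nu|$ is exactly the point the paper flags as the reason the harmonic reduction is needed, so you have identified the genuine crux.
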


The proof of Theorem \ref{CapEvalThm} follows immediately.

\noindent \emph{Proof of Theorem \ref{CapEvalThm}.} Notice that $D_\nu$ corresponds to a harmonic irreducible module if and only if $\nu_2=0$. Thus we apply Proposition \ref{CapHarmFormula} and Corollary \ref{CapEvalRecursive}. When $\nu_2>0$ we use the fact that $P_\nu^\rho(x,y)=xyP_{\nu'}^\rho (x-1,y-1)$ for $\nu'=(\nu_1-1,\nu_2-1)$, see \cite[Prop. 2.3]{KnoSah96}. \qed

\bibliographystyle{alpha}
\bibliography{biblio}

\end{document}